\documentclass{article}
\usepackage{amsfonts}
\usepackage{mathrsfs}
\usepackage{amsthm}
\usepackage[a4paper, total={6in, 8in}]{geometry}
\usepackage{graphicx}
\usepackage{color}
\usepackage{enumerate}
\graphicspath{{figure/}}
\usepackage{indentfirst,latexsym,bm}
\usepackage[ruled,linesnumbered]{algorithm2e}
\usepackage[colorinlistoftodos]{todonotes}
\usepackage{amsmath}
\usepackage{extarrows}
\usepackage{enumitem}
\usepackage{thmtools}
\usepackage{nicematrix,tikz}
\usepackage{hyperref}
\usepackage{soul} 
\usepackage[capitalise]{cleveref}
\crefname{equation}{}{}

\allowdisplaybreaks[4]
\numberwithin{equation}{section}
\usepackage{tikz}
\usepackage{scalerel} 
\crefname{enumi}{}{}   \Crefname{enumi}{}{}
\crefname{enumii}{}{}  \Crefname{enumii}{}{}
\crefname{enumiii}{}{} \Crefname{enumiii}{}{}
\crefname{enumiv}{}{}  \Crefname{enumiv}{}{}
\crefformat{enumi}{#2#1#3}
\crefformat{enumii}{#2#1#3}
\crefformat{enumiii}{#2#1#3}
\crefformat{enumiv}{#2#1#3}

\newlength{\bracewidth}

\theoremstyle{plain}

\newtheorem{thm}{Theorem}[section]
\declaretheorem[name=Proposition, sibling=thm, refname={Proposition,Propositions}]{prop}
\declaretheorem[name=Corollary,   sibling=thm, refname={Corollary,Corollaries}]{coro}
\declaretheorem[name=Lemma,       sibling=thm, refname={Lemma,Lemmas}]{lemma}

\declaretheorem[name=Fact,        sibling=thm, refname={Fact,Facts}]{fact}

\theoremstyle{plain}
\declaretheorem[
  name=Definition,
  sibling=thm,
  refname={Definition,Definitions},
  style=definition  
]{defn}
\declaretheorem[
  name=Remark,
  sibling=thm,
  refname={Remark,Remarks},
  style=remark
]{remark}

\newcommand{\R}{\mathbb{R}}

\newtheorem{exmp}[thm]{Example}
\newcommand{\tr}{\mathrm{tr}}

\newcommand{\diag}{\mathrm{diag}}
\newcommand{\Diag}{\mathrm{Diag}}

\newcommand{\rank}{\mathrm{rank}}
\newcommand{\revise}[1]{\textcolor{black}{#1}}

\newcommand{\bS}{\mathbb{S}}
\def\sp{\mathcal{P}}
\def\gp{\mathfrak{P}}

\newcommand{\cS}{\mathcal{S}}
\newcommand{\cO}{\mathcal{O}}

\newcommand{\fS}{\mathfrak{S}}

\newcommand{\cJ}{\mathcal{J}}

\newcommand{\RIP}{\mathrm{RIP}}
\newcommand{\cR}{\mathcal{R}}

\newcommand{\n}{\mathbb{N}}
\newcommand{\wdiag}{\widetilde{\rm diag}}
\newcommand{\wDiag}{\widetilde{\rm Diag}}

\newcommand{\ie}{{\it i.e.}}
\newcommand{\eg}{{\it e.g.}}

\usepackage{subfigure}
\usepackage{amssymb}

\begin{document}
\title{Burer-Monteiro factorizability of nuclear norm regularized optimization}
\author{
Wenqing Ouyang\thanks{Columbia University, New York, USA ({\tt wo2205@columbia.edu}).}
\and
Ting Kei Pong\thanks{The Hong Kong Polytechnic University, Hong Kong, People's Republic of China ({\tt tk.pong@polyu.edu.hk}). The research of this author was partially supported by the Hong Kong Research Grants Council PolyU 15300122 and the PolyU internal grant 4-ZZPJ.}
\and
Man-Chung Yue\thanks{The Hong Kong Polytechnic University, Hong Kong, People's Republic of China ({\tt manchung.yue@polyu.edu.hk}). The research of this author was partially supported by the Hong Kong Research Grants Council under the GRF project 17309423.}
}
\date{\today} 
\maketitle

\begin{abstract}
  This paper studies the relationship between the nuclear norm-regularized minimization problem, which minimizes the sum of a $C^2$ function $h$ and a positive multiple of the nuclear norm, denoted by $f$, and its factorized problem obtained by the Burer-Monteiro technique. We are interested in deriving conditions that ensure every second-order stationary point of the factorized problem corresponds to a global minimizer of $f$, a property we call the $r$-factorizability of $f$ in this paper. Under suitable restricted isometry property (RIP) type assumptions on $h$, we prove the $r$-factorizability of $f$. Moreover, the RIP constant in our paper is tight, in the sense that \revise{concrete non-$r$-factorizable $f$ can be constructed when the RIP-type assumption fails to hold}. Our technique for constructing such examples is novel and may be of independent interest: specifically, we use a variant of the Von Neumann's trace inequality and relate the existence of such examples to the optimal value of a quadratic program involving the RIP constant, then we explicitly solve this optimization problem to \revise{identify the parameter regimes in which such worst-case counterexamples can be constructed}.

\end{abstract}
\section{Introduction}
\label{sec:introduciton}
Low-rank matrix estimation has been an extremely important and versatile problem that has attracted intense research over the last two decades and found many applications across a wide range of domains, such as network science~\cite{hsieh2012low}, machine learning~\cite{harchaoui2012large, munson2025introduction}, quantum physics~\cite{liu2011universal}, control~\cite{mohan2010reweighted} and imaging~\cite{zhang2013hyperspectral, hu2018generalized}, to name but a few.
This paper focuses on the following low-rank optimization problem:
\begin{equation}
    \label{prob_nuclear}
    \min_{X\in\R^{m\times n}} f(X):=h(X)+\lambda\|X\|_*,
\end{equation}
where $h$ is assumed to be twice continuously differentiable, $\lambda > 0$, and $\| \cdot\|_*$ denotes the nuclear norm. Without loss of generality, we assume $m\leq n$ throughout the paper.
Problem~\eqref{prob_nuclear} takes the form of the so-called composite minimization which has been heavily studied, especially when $h$ is convex. Therefore, in principle it can be solved by many existing algorithms for composite minimization, including in particular various proximal algorithms~\cite{parikh2014proximal,  lee2014proximal, wen2017linear, yue2019family}, thanks to the closed-form expression of the proximal operator of the nuclear norm~\cite{cai2010singular}.

Nonetheless, in contemporary applications, the dimensions $m$ and $n$ of the decision variable~$X$ can potentially be extremely high, rendering methods working directly with the variable $X$ impractical. For example, in collaborative filtering, which is a classical application of low-rank matrix estimation, the dimensions $m$ and $n$ could be of the order of millions or even higher~\cite{munson2025introduction}. Worse still, the computational cost of the proximal operator associated with the nuclear norm, which is a fundamental building block of many existing algorithms for solving problem~\eqref{prob_nuclear}, is a cubic function in $m$ and $n$, as it involves the singular value decomposition of~$X$. 
To circumvent this, researchers proposed to tackle problem~\eqref{prob_nuclear} via the Burer-Monteiro factorization technique~\cite{burer2003nonlinear, burer2005local, park2017non, zhu2021global, zhang2024improved}, which replaces the variable $X$ by a low-rank approximation $UV^\top$ and solves the resulting problem:
\begin{equation}
    \label{prob_relaxation}
    \min_{U\in\R^{m\times r},V\in\R^{n\times r} }F_r(U,V):=h(UV^\top)+\frac{\lambda(\|U\|_F^2+\|V\|_F^2)}{2},
\end{equation}
where $\|\cdot\|_F$ denotes the Frobenius norm and $r\in [1,m]$ is an integer parameter specified by the modeler. 
The advantage of the factorized problem~\eqref{prob_relaxation} over problem~\eqref{prob_nuclear} is twofold. First, the objective function in the factorized problem~\eqref{prob_relaxation} is differentiable as long as $h$ is. In contrast, the objective function in problem~\eqref{prob_nuclear} is nonsmooth because of the nuclear norm. Second, we often choose $r \ll \min\{m,n\}$ in practice. The total size $r(m+n)$ of the matrix variables $(U,V)$ is therefore substantially smaller than the size $mn$ of the variable $X$ in the non-factorized counterpart~\eqref{prob_nuclear}.

Since our goal is to solve problem~\eqref{prob_nuclear}, the factorization rank $r$ in \eqref{prob_relaxation} cannot be too small. Indeed, optimal solutions of problem~\eqref{prob_nuclear} cannot be recovered by solving problem~\eqref{prob_relaxation} through the correspondence $(U,V)\mapsto UV^\top$ if $r$ is less than the minimum rank $r^*$ of the optimal solutions of problem~\eqref{prob_nuclear}. This issue is currently addressed indirectly as follows. First, it can be readily shown that $U^* {V^*}^\top$ is a global minimizer of problem~\eqref{prob_nuclear} for any global minimizer $(U^*,V^*)$ of problem~\eqref{prob_relaxation} if $r \ge r^*$ (\eg, see~\cite[Lemma 1]{lee2022accelerating}). Second, despite the non-convexity due to the bilinear term $UV^\top$, the factorized problem~\eqref{prob_relaxation} has no spurious local minimizers or second-order stationary points if $r$ is {\em sufficiently large} and $h$ satisfies certain technical conditions~\cite{li2017geometry, zhang2024improved}, which implies that one can actually solve problem~\eqref{prob_nuclear} with a strongly convex $h$ by using any optimization algorithm with a second-order convergence guarantee on problem~\eqref{prob_relaxation}. However, a proper choice of the parameter $r$ is highly nontrivial. As demonstrated by an example constructed in~\cite{zhang2024improved}, merely having $r\ge r^*$ is not enough in general, let alone that the minimum solution rank $r^*$ is often unknown in practice. Our paper also revolves around the choice of the factorization rank $r$ by asking a different but more direct question:
\begin{center}
\fbox{
\begin{minipage}{0.8\textwidth}
\textit{When do all the second-order stationary points of problem~\eqref{prob_relaxation} correspond to \\the global minimizers of problem~\eqref{prob_nuclear} via the mapping $(U,V)\mapsto UV^\top$?}
\end{minipage}}
\end{center}
This motivates the following definition.
\begin{defn}[$r$-factorizability]
    \label{$r$-factorizable}
    Let $h$ be twice continuously differentiable. The function~$f$ in problem~\eqref{prob_nuclear} is said to be $r$-factorizable if every second-order stationary point $(U,V)$ of the function $F_r$ in problem~\eqref{prob_relaxation} satisfies that $UV^\top$ is a global minimizer of $f$.
\end{defn}
\noindent With this definition, our problem is equivalent to the investigation of the $r$-factorizability of the objective function $f$ of problem~\eqref{prob_nuclear}. Most previous works studying the $r$-factorizability, if not all, rely on the restricted isometry property of $h$~\cite{ge2016matrix,ge2017no,zhu2021global,zhang2021general}. Here, we recall that for $\delta >0$ and integers $s,t \ge 0$, a twice continuously differentiable function $h:\mathbb{R}^{m\times n} \to \mathbb{R}$ is said to satisfy $\delta$-$\RIP_{s,t}$ condition~\cite{li2017geometry,zhu2018global,zhang2021general} if for all $X,H\in \R^{m\times n}$ with $\rank(X)\leq s$ and $\rank(H)\leq t$, it holds that
\begin{equation*}
    \label{rip}
    (1-\delta) \|H\|_F^2  \leq  \nabla^2h(X)[H,H]\leq  (1+\delta)\|H\|_F^2.
\end{equation*}

A closely related subject concerns the factorizability in the symmetric, unregularized setting, in which case the analogue problem pairs are
\begin{equation}
    \label{pd_version}
    \min_{X\in {\mathbb{S}}_+^n}    h(X)\ \ \ \ {\rm and}\ \ \ \ \min_{U\in \R^{n\times r}}    \widetilde h(U):= h(UU^\top)
\end{equation}
where $\mathbb S_+^n$ is the set of $n\times n$ symmetric positive semidefinite matrices and $h$ is $C^2$. In \cite{zhang2024improved,zhang2025sharp}, the author considered the case where $\nabla h$ is Lipschitz differentiable and $h$ is strongly convex, and showed that all second-order stationary points $U^*$ of $\widetilde h$ satisfy that $U^*{U^*}^\top$ is the unique minimizer $X^*$ of $h$ over ${\mathbb{S}}_+^n$ under suitable conditions on the factorization rank~$r$, solution rank~$r^*=\rank(X^*)$ and the condition number~$\kappa$ (\ie, the ratio between the Lipschitz constant of $\nabla h$ and the strong convexity modulus of $h$), namely (1) $r\geq r^*$ and $\kappa<3$; or (2) $n > r\geq r^*$ and $r> \tfrac{1}{4}(\kappa-1)^2r^*$; the author also constructed a function~$h$ with $\kappa=3$ such that $\widetilde h$ has a second-order stationary point that does not correspond to any global minimizer of $h$ over~$\mathbb{S}_+^n$. The result was then extended to the class of non-strongly convex quadratic functions satisfying the $\delta$-$\RIP_{r+r^*,r+r^*}$ condition in \cite[Corollary 1.5]{zhang2024improved} or \cite[Theorem 1.4]{zhang2025sharp}, with essentially the same bound on~$\delta$.

The asymmetric, unregularized case has also been studied in the literature, where the factorized problem is
\begin{equation}\label{prob_rank_nonconvex}
   \min_{U\in \R^{m\times r},V\in\R^{n\times r}}  h(UV^\top),
\end{equation}
\revise{The exact second-order landscape of \cref{prob_rank_nonconvex} has been analyzed under RIP-type assumptions; for instance, results in~\cite{ge2017no,zhang2021general} guarantee the nonexistence of spurious second-order stationary points when the RIP constant is sufficiently small. At the same time, \cite[Example~1.8]{zhang2025sharp} shows that, even under the strongest $0$-$\RIP_{m,m}$ condition, unfavorable $\epsilon$-second-order stationary behavior can occur when $r<m$. These results motivate the search for alternative formulations with better landscape properties.} One example is
\begin{equation}
\label{prob_modified}
    \min_{U\in \R^{m\times r},V\in\R^{n\times r}} \tilde F(U,V):= h(UV^\top)+\beta\|U^\top U - V^\top V\|_F^2,
\end{equation}
which was first proposed in \cite{tu2016low}. Upon defining $h_a:\bS^{m+n}\to \bS^{m+n}$ as
\begin{align}
\label{defn_ha}
     h_a(X):=h(X_2)+\beta(\|X_1\|_F^2+\|X_3\|_F^2-2\|X_2\|_F^2)   ,~~X=\begin{bmatrix}
    X_1 & X_2 \\
    X_2^\top & X_3
\end{bmatrix},~X_1\in \bS^{m},
\end{align}
one can see that $\tilde F(U,V)=h_a\left(\begin{bmatrix}
    U \\
    V
\end{bmatrix}\begin{bmatrix}
    U ^\top\!\!&\!\! V^\top
\end{bmatrix}\right)$; in this case, it can be shown that the $\delta$-$\RIP_{k,k}$ condition of $h$ implies the $2\delta$-$\RIP_{k,k}$ condition of $h_a$, see \cite[Fact 3.14]{zhang2025sharp}. Then one can reduce \eqref{prob_modified} to an instance of the second problem in \eqref{pd_version} and apply \cite[Corollary 1.5]{zhang2024improved}. The best known $\RIP$-type condition for ensuring the non-existence of spurious second-order stationary point of \cref{prob_modified} was established in \cite[Theorem 1.6]{zhang2025sharp}.


For the \revise{asymmetric}, regularized problem~\cref{prob_nuclear}, fewer works have been done. A common assumption in these works is that there exists an optimal solution to problem~\eqref{prob_nuclear}  and $r$ is chosen to be at least the minimum solution rank~$r^*$. 
In \cite[Theorem 3]{mcrae2025low}, the author showed that if $h$ is convex quadratic and satisfies the $\delta$-$\RIP_{2r,2r}$ condition with $\delta<\tfrac{1}{3}$, then the corresponding $f$ in problem~\eqref{prob_nuclear} is $r$-factorizable. A similar result was established in~\cite[Theorem 2]{li2017geometry} for a general twice continuously differentiable convex function~$h$ with a more restrictive bound on~$\delta$. Later, in \cite[Theorem 1]{kim2025lora},\footnote{The results in \cite{kim2025lora} were stated in terms of restricted strong convexity and restricted smoothness. The moduli $\alpha$ and $\beta$ therein correspond to $1-\delta$ and $1+\delta$ in our discussion here, respectively.} it was shown that when $h$ satisfies the $\delta$-$\RIP_{2r,2r}$ condition with $\delta<\tfrac{1}{3}$, then the corresponding $f$ in problem~\eqref{prob_nuclear} is $r$-factorizable, and when $\delta\geq \frac{1}{3}$, a second-order stationary point of problem~\eqref{prob_relaxation} corresponds to an approximate stationary point of problem~\eqref{prob_nuclear}. We are not aware of any tight result for \cref{prob_nuclear}, in the sense that once the proposed sufficient conditions fail, then an $h$ can be explicitly constructed with the corresponding $f$ in \eqref{prob_nuclear} being non-$r$-factorizable.


It is tempting to reduce the asymmetric problem to a symmetric problem, as in the unregularized case. Unfortunately, as we shall discuss in~\cref{sec:proof_tech}, this idea is inapplicable in the regularized case. 
To circumvent this, we adopt a different strategy to tackle the factorizability problem, which is outlined in \cref{sec:proof_tech}. Our new techniques enable us to derive tight RIP-type conditions in the sense of \cref{thm5-9} below.

To present our main results, we define the following classes. \revise{Throughout this paper, we assume $\lambda>0$ in \cref{prob_nuclear} is a fixed constant.}
\begin{defn}
\label{defn_class}
   Let $L\in (0,\infty)$, $\mu> 0$, $q,r^*\in [m]\cup\{0\}$. We define $\fS(L,\mu,q,r^*)$ to be the set of all $h\in C^2(\R^{m\times n})$ satisfying the following conditions:
   \begin{enumerate}[label=(\roman*)]
       \item For all $X,H\in \R^{m\times n}$ with $\rank(X),\rank(H)\leq q+r^*$, it holds that:
       \begin{equation}
       \label{eq_RIP}
           \mu \|H\|_F^2 \leq \nabla^2h(X)[H,H]\leq L\|H\|_F^2.
       \end{equation}
       \item There exists a global minimizer $X^*\in \R^{m\times n}$ of $f$ in \eqref{prob_nuclear} satisfying $\rank(X^*)=r^*$.
   \end{enumerate}
\end{defn}
When $L=1+\delta$ and $\mu=1-\delta$, \cref{eq_RIP} can be viewed as the $\delta$-$\RIP_{q+r^*,q+r^*}$ condition. Let us also note that when $q+r^*\geq m$, \cref{eq_RIP} reduces to the $\mu$-strong convexity of $h$ and $L$-Lipschitz continuity of $\nabla h$. To simplify the notation, we denote this latter kind of function classes by $\fS(L,\mu,r^*)$. Our first main result is the following theorem, which characterizes the $r$-factorizability of $\fS(L,\mu,r^*)$ in the following sense: when the conditions in \cref{thm5-9} are satisfied, then for all $h$ in $\fS(L,\mu, r^*)$ the corresponding $f$ in \cref{prob_nuclear} is $r$-factorizable, and if not, there exists $h\in \fS(L,\mu,r^*)$ such that the corresponding $f$ in \cref{prob_nuclear} is not $r$-factorizable.\footnote{\revise{We will refer to this class-level statement as ``tight in the sense of \cref{thm5-9}" for the rest of this paper. The tightness here is interpreted in a worst-case sense over the specified function class: when the stated condition fails, the class contains at least one non-$r$-factorizable example, but this does not assert that every particular function in that parameter regime is non-$r$-factorizable.}}
\begin{thm}
    \label{thm5-9}
    Let $r^*\in [m]\cup\{0\}$, $r\in [m]$, $\infty>L\geq \mu>0$ and $\kappa:=\frac{L}{\mu}\geq 1$. Suppose that $r^*$, $r$ and $\kappa$ satisfy any of the following conditions:
    \begin{enumerate}
            \item[(1)] $r=m$.
            \item[(2)] $r\geq r^*$ and $ \min\{r,m-r^*\}>\frac{(\kappa-1)^2}{4}\min\{r^*,m-r\} $.
    \end{enumerate}
    Then for all $h\in\fS(L,\mu,r^*)$, the corresponding $f$ in \cref{prob_nuclear} is $r$-factorizable. Otherwise, there exists a quadratic $h\in\fS(L,\mu,r^*)$ such that the corresponding $f$ in \cref{prob_nuclear} is not $r$-factorizable.
\end{thm}
Next, by analyzing suitable subspaces, we also obtain the following corollary from Theorem~\ref{thm5-9}. 
\begin{coro}
\label{coro1-4}
    \cref{thm5-9} holds when $\fS(L,\mu, r^*)$ is replaced by $\fS(L,\mu,r,r^*)$.
\end{coro}
In the case of $r+r^* < m$, $L=1+\delta$ and $\mu=1-\delta$, \cref{coro1-4} implies that $f$ is $r$-factorizable if $r/r^*>\frac{\delta^2}{(1-\delta)^2}$ and $h$ satisfies the $\delta$-$\RIP_{r+r^*,r+r^*}$ condition with a global minimizer $X^*\in \R^{m\times n}$ satisfying $\rank(X^*)=r^*$, and that an explicit counterexample can be constructed if $r/r^*\leq \frac{\delta^2}{(1-\delta)^2}$. \revise{Equivalently, letting $\rho:=r/r^*$, the sufficient condition can be rewritten as $\delta<\frac{\sqrt{\rho}}{1+\sqrt{\rho}}$ and the counterexample condition as $\delta\geq\frac{\sqrt{\rho}}{1+\sqrt{\rho}}$. Thus, for fixed $\rho$, a counterexample exists when the RIP constant is at least this threshold.} This bound matches the necessity condition in \cite[Theorem 1.6]{zhang2025sharp}, which studied the different model \cref{prob_modified}. However, we would like to point out that the sufficient condition in \cite[Theorem 1.6]{zhang2025sharp} requires $r/r^*>4\delta^2/(1-2\delta)^2$, which does not match the necessary condition there. 

Interestingly, our bound here also matches the bound in \cite[Theorem 1.1]{zhang2024improved}, which is for \cref{pd_version} though. However, as we will discuss in \cref{sec:proof_tech} below, our results do not follow from \cite{zhang2024improved} and cannot imply the results in \cite{zhang2024improved} and \cite{zhang2025sharp}, since we require the constant $\lambda$ to be positive in \cref{prob_nuclear}, which yields special structures on the first-order stationary points of $F_r$ in \cref{prob_relaxation}, c.f. \cref{stationary_point}.

\subsection{\revise{The break down of symmetric reduction and a novel proof technique}}
\label{sec:proof_tech}

To the best of our knowledge, all the previous works \cite{tu2016low,ge2017no,zhang2021general,zhang2025sharp} on the non-existence of spurious second-order stationary point of asymmetric problem~\cref{prob_modified} rely on symmetric reduction. To study the factorizability of \cref{prob_nuclear}, naturally, one would want to invoke a similar reduction and then invoke results for the symmetric case, such as~\cite{zhang2024improved,zhang2025sharp}. However, such an idea will not work, as we now explain.
First, note that the symmetric case counterpart of \cref{prob_nuclear} is to minimize
\[ h_s(X)=h(X_2)+\frac{\lambda}2\tr(X),~~X=\begin{bmatrix}
    X_1 & X_2 \\
    X_2^\top & X_3
\end{bmatrix}.    \]
We therefore have $h_s\left(\begin{bmatrix}
    U \\
    V
\end{bmatrix}\begin{bmatrix}
    U^\top \!&\! V^\top
\end{bmatrix}\!\right)=F_r(U,V)$. However, $h_s$ loses the $\RIP$ property by \cref{fact6-1}, since $h_s$ is not even strongly convex on the one-dimensional linear subspace $\{t E_{11}:~t\in \R\}$, where \revise{$E_{11}\in \R^{(m+n)\times(m+n)}$} is the matrix whose $(1,1)$-entry is $1$ and is zero otherwise. 
\revise{This causes the break down of the PSD Burer--Monteiro arguments.}

\revise{In \cite[Lemma~3.2]{zhang2024improved}, the problem of constructing a counterexample  with a fixed minimizer and a fixed spurious point is reformulated as an SDP whose constraints involve the Hessian operator of the lifted loss, in particular its action on the symmetric factorization directions through terms such as $\mathcal J_X^\top \mathcal H\mathcal J_X$. In \cite{zhang2025sharp}, the corresponding escape-direction proof is built from the sharp directions in \cite[Definition~3.1]{zhang2025sharp}, the tangent-normal decomposition in \cite[Fact~3.2]{zhang2025sharp}, and the resulting symmetric escape certificate in \cite[Theorem~3.10]{zhang2025sharp}. These certificates require curvature or RIP control of the lifted loss along low-rank symmetric tangent and normal directions generated by the PSD factorization. Such directions are not confined to the off-diagonal block $X_2$; they also contain diagonal-block variations in $X_1$ and $X_3$. For the lifted function $h_s$ above, however, the original loss $h$ only controls the off-diagonal block, and the trace term $\frac{\lambda}{2}\tr(X)$ is linear and hence does not affect the Hessian. Thus, the curvature certificate used in the PSD SDP or tangent-normal escape argument is not applicable to $h_s$.}

In contrast, our proofs do not rely on the symmetric reduction technique. \revise{The role played by the PSD tangent-normal structure is replaced by the first-order structure induced by the positive regularization parameter $\lambda$. Specifically, the first-order equations of $F_r$ contain the terms $\lambda U$ and $\lambda V$, and these terms force the balancedness condition $U^\top U=V^\top V$. This balancedness is then used in \cref{stationary_point} to show that $UV^\top$ and $-\nabla h(UV^\top)$ admit compatible singular-vector bases, with the ``active" singular values of $-\nabla h(UV^\top)$ equal to $\lambda$.} We start with the following well-known fact, essentially a restatement of \cite[Theorem 2.1.12]{nesterov2018lectures}: if $h\in C^1(\R^{m\times n})$ is $\mu$-strongly convex and $\nabla h$ is $L$-Lipschitz continuous, then for all $X,Y\in \R^{m\times n}$ it holds that
\begin{equation}
    \label{base_ineq}
    (L-\mu)\langle  \nabla h(X)-\mu X-(\nabla h(Y)-\mu Y), X-Y     \rangle\geq  \|\nabla h(X)-\mu X -(\nabla h(Y)-\mu Y)\|_F^2 .
\end{equation}
Next, by analyzing the first-order stationary point of $F_r$ in \cref{prob_relaxation}, we show that if $(\bar U,\bar V)$ is a stationary point of $F_r$, then $\bar X=\bar U\bar V^\top$ is a pseudo stationary point of $f$, in which case $\bar X$ and $\nabla h(\bar X)$ admit simultaneous singular value decompositions;  see \cref{stationary_point} and \cref{defn-psp}. Using a variant of Von Neumann's trace inequality (see \cref{prop2-6}), we transform \cref{base_ineq} into a bound concerning the singular values of $\bar X,X^*,\nabla h(\bar X),\nabla h(X^*)$, when $X^*$ and $\bar X$ are pseudo stationary points of $f$. Then, the existence of counterexamples can be transformed to the existence of a certain feasible solution to a quadratic program involving $L$ and $\mu$; see \cref{eq:opt_sc}. We then solve this optimization problem analytically, which yields tight bounds in the sense of \cref{thm5-9}.

\revise{Let us also clarify the scope of this proof strategy. For a general regularizer, one must first specify the associated factorized model before asking whether the present factorization-based argument extends. In the unweighted nuclear-norm case, the problem pair \cref{prob_nuclear}--\cref{prob_relaxation} is canonical because $\|X\|_*=\min_{X=UV^\top}\frac{1}{2}(\|U\|_F^2+\|V\|_F^2)$. Thus $h(X)+\lambda\|X\|_*$ naturally leads to $h(UV^\top)+\lambda(\|U\|_F^2+\|V\|_F^2)/2$. The proof then uses features that are specific to this pair: the isotropic Frobenius regularizer gives first-order optimality conditions with the same scalar $\lambda$ in the $U$- and $V$-blocks, which yield balancedness, the simultaneous singular-vector structure in \cref{stationary_point}, and the constant active/inactive threshold in the nuclear-norm subdifferential. These scalar constraints are what make the singular-value reduction and the finite-dimensional quadratic program possible. Other regularizers may not have the corresponding factorization models. For example, it is unclear whether weighted nuclear norms or rank-constrained formulations correspond to a useful factorized models. Some special Schatten quasi-norm regularizers do have natural factorized representations; for example, a three-factor model of the form $h(UVW)+\|U\|_F^2+\|V\|_F^2+\|W\|_F^2$ corresponds, after eliminating the factors, to a Schatten-$2/3$ quasi-norm penalty proportional to $\sum_i\sigma_i(X)^{2/3}$. However, this produces a nonconvex regularized problem. In such a setting, first-order stationary points of the non-factorized objective need not be global minimizers, so the factorizability question studied in this paper would have to be reformulated. Extensions to these models therefore require model-specific landscape certificates rather than a direct adaptation of the present proof.}

The generalization to the RIP case is based on the observation that, when \cref{eq_RIP} holds, the function $h$ is $\mu$-strongly convex and $\nabla h$ is $L$-Lipschitz continuous on any linear subspace of $\R^{m\times n}$ consisting of merely matrices whose rank is no more than $q+r^*$; see \cref{fact6-1}. In particular, for $\rank(X^*)=r^*$ and $\rank(\bar X)=r$, we can restrict the function on a linear subspace of $\R^{(r+r^*)\times (r+ r^*)}$ under proper orthogonal transformation, which contains $X^*$ and $\bar X$ (see \cref{common_svd}), and then apply the result for the strongly convex case (see \cref{prop_rip}). 

\revise{We also comment on the role of the $C^2$ assumption. The restricted RIP-type condition \cref{eq_RIP} is stated in Hessian form because this paper works in the classical smooth setting. For the singular-value and quadratic-program analysis after the reduction, however, one could replace this pointwise Hessian condition by a restricted secant condition of the form $\mu\|X-Y\|_F^2\leq \langle\nabla h(X)-\nabla h(Y),X-Y\rangle\leq L\|X-Y\|_F^2$ for matrices $X,Y$ in the relevant rank-restricted set. Once the singular-value information for second-order stationary points is available, the arguments in \cref{boundr} and \cref{sec:gen_rip} use only such first-order curvature information, pseudo-stationarity, and the finite-dimensional quadratic program. The main place where $C^2$ is essential in the present proof is \cref{prop4-7}: there we derive the bound on the singular values of $-\nabla h(\bar U\bar V^\top)$ that correspond to zero singular value of $\bar U\bar V^\top$ from the classical Hessian-based second-order stationarity of $F_r$. Thus, removing the $C^2$ assumption would require choosing an appropriate generalized notion of second-order stationarity and proving an analogue of \cref{prop4-7} under that notion. We do not pursue this separate generalized second-order analysis here.}

Let us now compare our proof techniques to those used in the literature. In~\cite{zhang2024improved,zhang2025sharp}, tight bounds on the $\RIP$ constant $\delta$ analogous to those in our \cref{thm5-9} were obtained for \cref{pd_version}, and their bounds are the same as ours despite the difference in the problems under study. Then the result in \cite{zhang2025sharp} for \cref{pd_version} was generalized to \cref{prob_modified} by using symmetric reduction. Note that the best bounds on the $\RIP$ constant $\delta$ in \cite[Theorem 1.6]{zhang2025sharp} for \cref{prob_modified} are also not tight as in our Theorem~\ref{thm5-9}, which may be attributed to their use of the reduction to the symmetric case, so that the $\delta$-$\RIP_{k,k}$ condition of $h$ only yields the $2\delta$-$\RIP_{k,k}$ condition on $h_a$ in \cref{defn_ha}. Although the tight bound in \cite{zhang2024improved,zhang2025sharp} for \cref{pd_version} aligns with our bound, their techniques are inapplicable in our context. In \cite{zhang2024improved}, the existence of counterexamples was formulated as an semidefinite programming (SDP) problem, then the author solved the optimization problem analytically to get a tight RIP constant bound. In \cite{zhang2025sharp}, the author showed that the existence of (quadratic) counterexamples can be equivalently formulated as the existence of escape direction, and a sharp escape direction is constructed explicitly based on the tangent-normal decomposition of the manifold of $\rank$-$r$ positive semidefinite matrices. In contrast, we formulate a quadratic programming problem and solve it analytically. Our proof techniques are also different from those in \cite{li2017geometry,kim2025lora,mcrae2025low}, which focus on \cref{prob_relaxation}. Indeed, our proof is based on analyzing the singular values of the global minimizer $X^*$ appeared in \cref{defn_class} and other related matrices (see \cref{nprop5-4}), which leads to the tight bounds in the sense of \cref{thm5-9}.

The remainder of the paper is organized as follows. In \cref{sec:notation}, we define the notation and present some preliminary results. The characterization of first- and second-order stationary points of problem~\eqref{prob_relaxation} is presented in \cref{sec:stationary_points}.  In \cref{boundr}, we prove \cref{thm5-9} by using the results in \cref{sec:notation} and \cref{sec:stationary_points}. The generalization to the $\RIP$ case, namely \cref{coro1-4}, is proved in \cref{sec:gen_rip}.

\section{Notation and preliminaries}
\label{sec:notation}
Throughout this paper, we assume that $1\le r\le m\le n$ in problem~\eqref{prob_relaxation}.
For a matrix $X\in\R^{m\times n}$, we let $\|X\|_*$, $\|X\|_2$ and $\|X\|_F$ denote its nuclear norm, spectral norm and Frobenius norm, respectively. The $i$-th largest singular value of $X$ is denoted by $\sigma_i(X)$ for $i = 1,\dots,m$. 
The vector of singular values is denoted by $\sigma(X) = \begin{bmatrix}\sigma_1 (X)&\cdots& \sigma_m(X)\end{bmatrix}^\top$. \revise{For a general matrix $Z\in\R^{p\times q}$, $\sigma(Z)$ is defined analogously as the vector of its $\min\{p,q\}$ singular values.}
The set of $n\times n$ orthogonal matrices is denoted by $\cO^n$. 
For $x\in \R^s$, we denote by $\Diag(x)\in \R^{s\times s}$ the diagonal matrix with $(\Diag(x))_{ii} = x_i$ for $i = 1,\dots,s$. Moreover, we define $\diag:\R^{s\times s}\to \R^s$ to be the adjoint operator of $\Diag$. In this paper, to simplify the presentation, we also use $\wDiag$ and $\wdiag$ to denote the possibly non-square versions of $\Diag$ and $\diag$, respectively. Specifically, for $x\in \R^s$, $\wDiag(x)$ would be a diagonal matrix whose diagonal part is $x$, which is not necessarily square; the dimension of $\wDiag (x)$ is omitted when it can be understood from the context.\footnote{ 
For example, if $R\in \R^{m\times m}$ and $P\in \R^{n\times n}$, then writing $R\,\wDiag(x)P^\top $ would imply that $\wDiag(x)\in \R^{m\times n}$.} Also, for $X=[X_1~~X_2]\in \R^{m\times n}$ with $X_1\in \R^{m\times m}$ and $X_2\in \R^{m\times (n-m)}$, we define $\wdiag(X):=\diag(X_1) \in \R^m$. \revise{For a matrix $Z\in \R^{p\times q}$, we define}
\[
\revise{\cO_Z := \{ (R, P)\in \cO^p\times \cO^q: R\,\wDiag(\sigma(Z))P^\top = Z \}},
\]
\revise{where $Z\in\R^{p\times q}$ and $\wDiag(\sigma(Z))$ is understood as a $p\times q$ matrix.}

For a mapping $H:\R^{m\times n}\to \R^{m\times n}$, we say $H$ is Lipschitz continuous with modulus $L$ if the following holds:
\[ 
\|H(X)-H(Y)\|_F\leq L\|X-Y\|_F\quad \quad \forall X,Y\in \R^{m\times n}.   \]
The strong convexity for an $h\in C^2(\R^{m\times n})$ is also defined with respect to the Frobenius norm. Namely, $h\in C^2(\R^{m\times n})$ is said to be $\mu$-strongly convex if $\nabla^2 h(X)[Y,Y]\geq\mu\|Y\|_F^2$ for all $X,Y\in \R^{m\times n}$, where the Hessian $\nabla^2h(X):\R^{m\times n}\times \R^{m\times n}\to \R$ is regarded as a quadratic form on $\R^{m\times n}$. To avoid clutter, we sometimes use the notation $\nabla^2h(X)[Y]^2$ to denote $\nabla^2h(X)[Y,Y]$. 

 The set of nonnegative integers is denoted by $\n_0$. For a nonnegative integer $r$, we use $[r]$ to denote the set $\{1,\dots,r\}$; in particular, $[0] := \emptyset$. The permutation group of order $m$ is denoted by $\gp_m$, and the set of $m\times m$ permutation matrices is denoted by $\sp_m$. Finally, for an $x\in \R$, we let $\lfloor  x\rfloor$ denote the largest integer upper bounded by $x$. 

We will need the following characterization of the subdifferential of the nuclear norm.
\begin{prop}[{\cite[Example 2]{watson1992characterization}}]
    \label{subd_nuclear}
    Let $X\in \R^{m\times n}$ be a matrix of rank $s$ and $(R,P)\in \cO_X$. Then, 
    \[    \partial \|X\|_* =\left\{ R \begin{bmatrix}  I & 0 \\
    0 & W   \end{bmatrix}P^\top :~W\in \R^{(m-s)\times (n-s)},~\|W\|_2\leq 1  \right\}.       \]
\end{prop}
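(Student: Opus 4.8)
The plan is to leverage the classical duality between the nuclear and spectral norms. Recall von Neumann's trace inequality $\tr(Z^\top X)\le \sum_{i=1}^m\sigma_i(Z)\sigma_i(X)\le \|Z\|_2\|X\|_*$, valid for all $Z$, together with the fact that the bound is attained at $Z=R\,\wDiag(\mathbf 1)P^\top$ for any $(R,P)\in\cO_X$; hence $\|X\|_*=\max\{\tr(Z^\top X):\|Z\|_2\le 1\}$, i.e.\ the spectral norm is the dual norm of the nuclear norm. Combining this with the standard description of the subdifferential of a norm $g$ — namely $z\in\partial g(x)$ iff $g^\circ(z)\le 1$ and $\langle z,x\rangle=g(x)$, where $g^\circ$ denotes the dual norm (which follows by testing the subgradient inequality at $y=0$ and at $y=2x$, and then recognizing $g(y)\ge\langle z,y\rangle$ for all $y$ as $g^\circ(z)\le 1$) — one obtains
\[
\partial\|X\|_* = \left\{ Z\in\R^{m\times n}:~\|Z\|_2\le 1,~\tr(Z^\top X)=\|X\|_*\right\}.
\]
It then remains to show that this set equals the one in the statement.

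First I would pass to the singular basis. Fix $(R,P)\in\cO_X$, so that $X=R\,\wDiag(\sigma(X))P^\top$ with $\sigma_1(X)\ge\cdots\ge\sigma_s(X)>0=\sigma_{s+1}(X)=\cdots=\sigma_m(X)$, and for $Z\in\R^{m\times n}$ set $\widetilde Z:=R^\top Z P$. Since $R,P$ are orthogonal, $\|\widetilde Z\|_2=\|Z\|_2$ and $\tr(Z^\top X)=\tr(\widetilde Z^\top\wDiag(\sigma(X)))=\sum_{i=1}^s\sigma_i(X)\widetilde Z_{ii}$. Hence $Z\in\partial\|X\|_*$ if and only if $\|\widetilde Z\|_2\le 1$ and $\sum_{i=1}^s\sigma_i(X)\widetilde Z_{ii}=\sum_{i=1}^s\sigma_i(X)$.

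The crux is to deduce from these two conditions that $\widetilde Z=\begin{bmatrix}I_s&0\\0&W\end{bmatrix}$ for some $W\in\R^{(m-s)\times(n-s)}$ with $\|W\|_2\le 1$. Since $\|\widetilde Z\|_2\le 1$ forces $|\widetilde Z_{ii}|\le 1$ for every $i$, and $\sigma_i(X)>0$ on $[s]$, the displayed equality forces $\widetilde Z_{ii}=1$ for all $i\in[s]$. For each such $i$, writing $e_i$ for the $i$-th standard basis vector of the appropriate dimension, Cauchy--Schwarz gives $1=e_i^\top\widetilde Z e_i\le \|e_i\|_2\,\|\widetilde Z e_i\|_2\le 1$, the last inequality because $\|\widetilde Z\|_2\le 1$; equality in Cauchy--Schwarz together with $\|\widetilde Z e_i\|_2\le 1$ then forces $\widetilde Z e_i=e_i$, and the same argument applied to $\widetilde Z^\top$ yields $\widetilde Z^\top e_i=e_i$. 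Thus the first $s$ columns and the first $s$ rows of $\widetilde Z$ are standard basis vectors, which is precisely the asserted block structure, with $W$ the trailing $(m-s)\times(n-s)$ block; block-diagonality then gives $\|W\|_2\le\|\widetilde Z\|_2\le 1$. Conversely, any $\widetilde Z$ of this form satisfies $\|\widetilde Z\|_2=\max\{1,\|W\|_2\}\le 1$ and $\sum_{i=1}^s\sigma_i(X)\widetilde Z_{ii}=\|X\|_*$, hence $Z=R\widetilde Z P^\top\in\partial\|X\|_*$. Undoing the substitution $Z\mapsto R\widetilde Z P^\top$ gives exactly the set in the statement (which, incidentally, shows it is independent of the choice of $(R,P)\in\cO_X$). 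The only nonroutine point is the Cauchy--Schwarz step isolating the identity block; everything else is bookkeeping with the SVD and the dual-norm characterization of $\partial\|\cdot\|_*$.
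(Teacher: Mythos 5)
The paper does not prove Proposition~\ref{subd_nuclear}; it simply cites \cite[Example 2]{watson1992characterization}, so there is no paper proof to compare against. Your argument is correct and is the standard one: it combines the dual-norm characterization of the subdifferential of a norm with the spectral/nuclear duality to reduce to $\{Z : \|Z\|_2 \le 1,\ \tr(Z^\top X) = \|X\|_*\}$, then uses orthogonal invariance to pass to the singular basis, and a Cauchy--Schwarz equality argument to pin down the identity block and the vanishing off-diagonal blocks. All the steps check out, including the subtle points (the distinction between $e_i\in\R^m$ and $e_i\in\R^n$ in the column and row arguments, $|\widetilde Z_{ii}|\le\|\widetilde Z\|_2$, and $\|W\|_2\le\|\widetilde Z\|_2$ for a submatrix).
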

Note that while the singular value decomposition of $X$ is not unique, the subdifferential $\partial \|X\|_*$ is independent of the choice of the singular value decomposition.

Before ending this section, we present a variant of von Neumann's trace inequality. Its proof requires the following well-known result concerning doubly stochastic matrices.
\begin{lemma}
    \label{lem2-5}
    Let $A\in\R^{m\times  m}$ be a nonnegative matrix that satisfies
    \[ 
    \forall i\in [m],\quad \sum_{j=1}^m A_{ij}\leq 1,\quad \sum_{j=1}^m A_{ji}\leq 1.    
    \]
    Then, there exists a doubly stochastic matrix $B$ such that $B_{ij}\geq A_{ij}$ for all $i$ and $j$.
\end{lemma}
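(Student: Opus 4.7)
The plan is to construct $B$ by adding to $A$ a nonnegative correction matrix $D$ that exactly fills in the row and column deficits of $A$. For each $i \in [m]$ define the row-slack $r_i := 1 - \sum_{j=1}^{m} A_{ij}$ and the column-slack $c_i := 1 - \sum_{j=1}^{m} A_{ji}$; by the hypothesis on $A$, both are nonnegative. Summing the defining relations in two different orders gives
\[
\sum_{i=1}^{m} r_i \;=\; m - \sum_{i,j} A_{ij} \;=\; \sum_{j=1}^{m} c_j,
\]
so the total row slack equals the total column slack. This matching is exactly the compatibility condition that makes the transportation problem ``find $D \ge 0$ with prescribed row sums $r_i$ and column sums $c_j$'' feasible.

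A rank-one formula solves the transportation problem in closed form. Let $T := \sum_{i} r_i = \sum_{j} c_j$. If $T = 0$ then $A$ is already doubly stochastic and one simply sets $B := A$. Otherwise, define $D_{ij} := r_i c_j / T$ and $B := A + D$. Since $r_i, c_j \ge 0$, we have $D \ge 0$ entrywise, so $B_{ij} \ge A_{ij}$ holds automatically. A short computation of the row and column sums of $B$, using $\sum_j c_j = T$ and $\sum_i r_i = T$, shows each equals $1$, so $B$ is doubly stochastic.

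There is no substantive obstacle in this argument; the only point demanding a moment of care is the equality of the total row and column slacks, which is precisely what permits a single correction matrix to simultaneously fix both deficiency vectors. Should one prefer to avoid the explicit rank-one formula, the same conclusion follows from any standard feasibility argument for doubly-prescribed nonnegative matrices (e.g., the northwest-corner rule on $(r_i)$ and $(c_j)$).
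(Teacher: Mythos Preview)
Your proof is correct. The approach differs from the paper's: where the paper runs an iterative greedy procedure (repeatedly picking a deficient row $i$ and a deficient column $j$ and raising $A_{ij}$ until one of the two sums reaches $1$, terminating because $|\mathcal R|+|\mathcal C|$ strictly decreases), you give a one-shot closed-form correction $D = rc^\top/T$. Your rank-one construction is more direct and yields an explicit $B$; the paper's greedy fill is essentially the northwest-corner style argument you mention at the end, and has the minor advantage of producing a correction supported on at most $2m-1$ entries rather than a dense one. Either route is perfectly adequate for the lemma's sole use (bounding the bilinear form in Lemma~\ref{prop2-6}).
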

\begin{proof}
    Let ${\cal R}$ and ${\cal C}$ be the sets consisting of the indices of the rows and columns of $A$ whose sum is less than $1$, respectively. Clearly, ${\cal R}$ and ${\cal C}$ must be simultaneously empty or nonempty. We modify the matrix $A$ gradually in the following manner: at each step, we select $i\in {\cal R}$ and $j\in {\cal C}$, and enlarge $A_{ij}$ until either the row sum of $i$-th row or the column sum of $j$-th column reaches $1$. Then we update ${\cal R}$ and ${\cal C}$ and repeat this process. Since ${\cal R}$ and ${\cal C}$ are always simultaneously empty or nonempty, our algorithm is well defined. Moreover, after each step, the quantity $|{\cal R}|+|{\cal C}|$ is reduced by at least $1$. Since this number is finite, we must end with ${\cal R}={\cal C}=\emptyset$. Then the resulting matrix, denoted by $B$, is doubly stochastic, and it holds by construction that $B_{ij}\geq A_{ij}$ for all $i$ and $j$.
\end{proof}
Below is the announced variant of von Neumann's trace inequality, which reduces to the classical von Neumann's inequality when $C$ or $D$ is a zero matrix.
\begin{lemma}
    \label{prop2-6}
    Let $A$, $B$, $C$ and $D$ be nonnegative $m\times m$ diagonal matrices with diagonal vectors $d^A$, $d^B$, $d^C$ and $d^D$, respectively. Then, we have 
    \begin{equation}
        \sup_{R\in \cO^m,P\in \cO^n} \tr(R\begin{bmatrix}
            A & 0
        \end{bmatrix}P \begin{bmatrix}
             B \\
             0
        \end{bmatrix})+ \tr(R\begin{bmatrix}
            C & 0
        \end{bmatrix}P \begin{bmatrix}
             D \\
             0
        \end{bmatrix})  =\max_{E\in \sp_m} (d^A)^\top E d^B+ (d^C)^\top E(d^D),
    \end{equation}   
    where $\sp_m$ is the set of $m\times m$ permutation matrices.
\end{lemma}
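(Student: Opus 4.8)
The plan is to reduce the matrix optimization on the left-hand side to a purely combinatorial optimization over permutations, with the two terms handled simultaneously rather than separately. First I would observe that the left-hand side is at most the right-hand side: writing $R = \begin{bmatrix} R_1 & R_2\end{bmatrix}$ with $R_1 \in \R^{m\times m}$ and $P = \begin{bmatrix} P_1 & P_2 \end{bmatrix}^\top$ accordingly (so that $R\begin{bmatrix} A & 0\end{bmatrix} P \begin{bmatrix} B \\ 0\end{bmatrix} = R_1 A P_1^\top B$, say, once one is careful about which block of $P$ is picked out by the zero-padding), the two traces become $\tr(R_1 A \widetilde P_1^\top B)$-type expressions where $\widetilde P_1$ is the $m\times m$ leading block of $P$. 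Crucially, the \emph{same} orthogonal matrices $R$ and $\widetilde P_1$ appear in both terms. Expanding in coordinates, $\tr(R_1 A \widetilde P_1^\top B) = \sum_{i,j} d^A_j d^B_i (R_1)_{ij}(\widetilde P_1)_{ij}$ and similarly for the $C,D$ term, so the whole objective equals $\sum_{i,j} \big(d^A_j d^B_i + d^C_j d^D_i\big)\, (R_1)_{ij}(\widetilde P_1)_{ij}$ — a single bilinear form against the common ``interaction matrix'' $M_{ij} := (R_1)_{ij}(\widetilde P_1)_{ij}$.

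The key structural fact is then that $M$ satisfies the hypotheses of Lemma~\ref{lem2-5}: since $R_1$ and $\widetilde P_1$ are submatrices of orthogonal matrices, each row and column of $R_1$ (resp.\ $\widetilde P_1$) has Euclidean norm at most $1$, so by Cauchy--Schwarz $\sum_j |M_{ij}| = \sum_j |(R_1)_{ij}||(\widetilde P_1)_{ij}| \le \big(\sum_j (R_1)_{ij}^2\big)^{1/2}\big(\sum_j (\widetilde P_1)_{ij}^2\big)^{1/2} \le 1$, and similarly for column sums. Applying Lemma~\ref{lem2-5} to the nonnegative matrix $|M|$ (entrywise absolute value) produces a doubly stochastic $B' \ge |M|$ entrywise; since all the coefficients $d^A_j d^B_i + d^C_j d^D_i$ are nonnegative (all four diagonal matrices being nonnegative), we get $\sum_{i,j}(d^A_j d^B_i + d^C_j d^D_i) M_{ij} \le \sum_{i,j}(d^A_j d^B_i + d^C_j d^D_i) B'_{ij}$, and then Birkhoff--von Neumann says the maximum of a linear functional over doubly stochastic matrices is attained at a permutation matrix $E \in \sp_m$. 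Reading off the coordinates, this upper bound is exactly $(d^A)^\top E d^B + (d^C)^\top E d^D$ for a suitable permutation (a small bookkeeping point: one must transpose or relabel so the indices line up with the stated quadratic form, but this is cosmetic).

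For the reverse inequality — that the permutation value is actually \emph{achieved} — I would take the maximizing permutation $E$ from the right-hand side, pick the permutation matrix realizing it, and exhibit explicit $R$ and $P$: take $R_1$ to be the permutation matrix associated with $E$ (padded with an identity block in $R_2$ to make $R$ orthogonal), and take $P$ to be the identity (or whatever permutation makes the zero-padding $\wDiag$ conventions work out). Then both $(R_1)_{ij}$ and $(\widetilde P_1)_{ij}$ are $0/1$ valued with $M_{ij} = (R_1)_{ij}$, so the objective collapses to $(d^A)^\top E d^B + (d^C)^\top E d^D$ exactly. Combining the two directions gives the claimed equality, and the ``reduces to classical von Neumann'' remark follows since with $C = 0$ or $D = 0$ the second term vanishes and $\max_E (d^A)^\top E d^B = \sum_i d^A_{[i]} d^B_{[i]}$ by the rearrangement inequality.

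The main obstacle I anticipate is purely notational rather than conceptual: the non-square zero-padding encoded in $\begin{bmatrix} A & 0\end{bmatrix}$ and $\begin{bmatrix} B \\ 0 \end{bmatrix}$ and the $\wDiag$/$\wdiag$ conventions mean one has to be meticulous about \emph{which} $m\times m$ subblock of the $n\times n$ matrix $P$ actually interacts with $A$ and $B$, and about the fact that the trace $\tr(R[A\;0]P[B;0])$ is well-defined only after checking the block dimensions match. Once the expression is correctly rewritten as $\tr(R_1 A \Pi^\top B)$ for an appropriate $m\times m$ contraction $\Pi$ of $P$ (again with all rows/columns of norm $\le 1$), the argument above goes through verbatim. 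A secondary point to handle with care is the sign/absolute-value step: $M_{ij}$ need not be nonnegative, so one genuinely needs the nonnegativity of the coefficient matrix $d^A_j d^B_i + d^C_j d^D_i$ to pass from $M$ to $|M|$ without loss, which is exactly where the hypothesis that $A,B,C,D$ are nonnegative is used.
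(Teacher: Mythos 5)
Your upper-bound argument is essentially the paper's: expand the traces in coordinates as a bilinear form against an interaction matrix $M$, show $M$ has row and column sums at most $1$, apply \cref{lem2-5} to dominate it by a doubly stochastic matrix, then invoke Birkhoff's theorem. The only variation is that you bound $\sum_j |M_{ij}|$ via Cauchy--Schwarz (using $M_{ij}=(R_1)_{ij}(\widetilde P_1)_{ij}$) whereas the paper first applies AM-GM to write $P_{ij}R_{ji}\le \tfrac12(R_{ji}^2+P_{ij}^2)$ and feeds the resulting manifestly nonnegative matrix into \cref{lem2-5}; the two are interchangeable and both rely on the nonnegativity of the coefficients $d^A_jd^B_i+d^C_jd^D_i$ at the same point.

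There is, however, a concrete bug in your achievability step. With $R_1$ equal to the permutation matrix $E$ and $P$ equal to the identity, you would have $\widetilde P_1 = I_m$ and hence $M_{ij} = (R_1)_{ij}\delta_{ij} = E_{ii}\delta_{ij}$; the objective collapses to $\sum_i (d^A_i d^B_i + d^C_i d^D_i) E_{ii}$, which is not $(d^A)^\top E d^B + (d^C)^\top E d^D$ unless $E = I$. Your hedge ``or whatever permutation makes the conventions work out'' does not quite repair this, because the required choice is not arbitrary: the $m\times m$ leading block of $P$ must carry the \emph{same} permutation as $R$, matched so that $M_{ij}=E_{ij}$. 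This is exactly the paper's choice, $R = E_*^\top$ and $P = \begin{bmatrix}E_* & 0\\ 0 & I_{n-m}\end{bmatrix}$, which gives $R_{ji}(P_{11})_{ij} = (E_*)_{ij}^2 = (E_*)_{ij}$. Also, a minor point: since $R\in\cO^m$ is already $m\times m$, there is no $R_2$ block and no identity padding needed on $R$; the padding lives only in $P\in\cO^n$.
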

\begin{proof}
    For any $R\in \cO^m$ and $P\in \cO^n$, we have
    \begin{align*}
        &\tr(R\begin{bmatrix}
            A & 0
        \end{bmatrix}P \begin{bmatrix}
             B \\
             0
        \end{bmatrix})+ \tr(R\begin{bmatrix}
            C & 0
        \end{bmatrix}P \begin{bmatrix}
             D \\
             0
        \end{bmatrix})\\
        &=\sum_{i,j=1}^m (d^A_id^B_j+d_i^Cd^D_j)P_{ij}R_{ji} \leq \sum_{i,j=1}^m (d^A_id^B_j+d_i^Cd^D_j)(\frac{R_{ji}^2}{2}+\frac{P_{ij}^2}{2}) \\
        &\overset{(\rm a)}{=} \sum_{i,j=1}^m (d^A_id^B_j+d_i^Cd^D_j)Z_{ij}=(d^A)^\top Z d^B+(d^C)^\top Zd^D,
    \end{align*}
    where in (a) we define $Z\in \R^{m\times m}$ such that $Z_{ij}=\frac{R_{ji}^2}{2}+\frac{P_{ij}^2}{2}$ for all $i$ and $j$. Since $R\in\cO^m$ and $P\in\cO^{n}$, we see that all row sums and column sums of $Z$ are at most 1. By \cref{lem2-5}, we know there is a doubly stochastic matrix $Y$ such that $Y_{ij}\geq Z_{ij}$ for all $i$ and $j$. Since $d^A,d^B,d^C,d^D$ are all nonnegative, we have 
    \[   
    (d^A)^\top Z d^B+(d^C)^\top Zd^D \leq (d^A)^\top Yd^B+(d^C)^\top Yd^D.       
    \]
    Applying Birkhoff theorem (see, \eg, \cite[Theorem 1.2.5]{borwein2006convex}), the matrix $Y$ is a convex combination of permutation matrices, namely, $Y=\sum_{i=1}^s\lambda_iP_i$, where $P_i\in\sp_m$, $\lambda_i\geq 0$ for each $i = 1,\ldots,s$ with $\sum_{i=1}^s\lambda_i=1$. Therefore, we see that 
    \[ (d^A)^\top Yd^B+(d^C)^\top Yd^D=\sum_{i=1}^s \lambda_i[(d^A)^\top P_id^B+(d^C)^\top P_id^D]\leq \sup_{E\in\sp_m} (d^A)^\top E d^B+ (d^C)^\top E(d^D). \]
    This upper bound can be achieved by setting $R=E_*^\top$ and $P=\begin{bmatrix} E_* & 0 \\ 0 & I_{n-m} \end{bmatrix}\in \R^{n\times n}$, where $E_*$ achieves the supremum in $\sup_{E\in\sp_m} (d^A)^\top E d^B+ (d^C)^\top E(d^D)$.
\end{proof}

\section{First- and second-order stationary points of $F_r$}
In this section, we present characterizations of first- and second-order stationary points of $F_r$ in problem~\eqref{prob_relaxation}. Let us note that in the literature, the stationarity of $F_r$ has already been studied in \cite{li2017geometry,li2019non}. However, for our purpose, we need to extract useful features that have not been documented in the literature. To begin with, we can derive directly from the first-order optimality condition of problem \eqref{prob_relaxation} (see \eqref{stat_F} below; see also \cite[Proposition~2]{li2017geometry}) that if $(U,V)$ is a stationary point of $F_r$, then $U^\top U=V^\top V$, which is also called a balanced pair. The next lemma studies how the balancedness can affect the singular vectors of $U$ and $V$, which serves as the basis to establish that $X$ and $\nabla h(X)$ have SVDs with common orthonormal bases (possibly in different order) when $X=UV^\top$ with $(U,V)$ being a first-order stationary point of $F_r$.

\label{sec:stationary_points}
\begin{lemma}
    \label{lem3-1}
    Let $V\in \R^{n\times r}$ and $U\in \R^{m\times r}$. Then, $U^\top U = V^\top V$ if and only if $\sigma(V) = \sigma(U)$ and for any $(P, Q)\in \cO_V$ there exists $R$ such that $(R, Q)\in \cO_U$.
\end{lemma}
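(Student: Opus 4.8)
The plan is to route both implications through the $r\times r$ Gram matrices $U^\top U$ and $V^\top V$. The guiding observation is that if $V = P\,\wDiag(\sigma(V))\,Q^\top$ with $(P,Q)\in\cO_V$, then $V^\top V = Q\,\wDiag(\sigma(V))^\top\wDiag(\sigma(V))\,Q^\top$, where $\wDiag(\sigma(V))^\top\wDiag(\sigma(V))$ is simply the $r\times r$ matrix $\Diag(\sigma_1(V)^2,\dots,\sigma_r(V)^2)$; in other words, the shared ``right factor'' $Q$ is exactly what makes $U^\top U$ and $V^\top V$ literally equal rather than merely similar, which is the content of the statement.

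For the ``if'' direction I would first note $\cO_V\neq\emptyset$ by the SVD, pick any $(P,Q)\in\cO_V$, and invoke the hypothesis to get $R$ with $(R,Q)\in\cO_U$. Substituting $V=P\,\wDiag(\sigma(V))\,Q^\top$ and $U=R\,\wDiag(\sigma(U))\,Q^\top$ and using $P^\top P = I$, $R^\top R = I$ yields $V^\top V = Q\,\Diag(\sigma_1(V)^2,\dots,\sigma_r(V)^2)\,Q^\top$ and the analogous identity for $U$; the assumption $\sigma(V)=\sigma(U)$ then forces $U^\top U = V^\top V$. This direction is essentially a one-line computation.

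For the ``only if'' direction, assume $U^\top U = V^\top V$. Since the squared singular values of a matrix are precisely the eigenvalues of its Gram matrix, and both lists are arranged in decreasing order, $\sigma(U)=\sigma(V)=:\sigma\in\R^r$ follows at once. Now fix an arbitrary $(P,Q)\in\cO_V$ and set $W:=UQ\in\R^{m\times r}$; the key computation is $W^\top W = Q^\top(U^\top U)Q = Q^\top(V^\top V)Q = \Diag(\sigma_1^2,\dots,\sigma_r^2)$, so the columns $w_1,\dots,w_r$ of $W$ are pairwise orthogonal with $\|w_j\|_2 = \sigma_j$. I would then build $R\in\cO^m$ column by column: for each $j$ with $\sigma_j>0$ set $r_j := w_j/\sigma_j$, observe that these form an orthonormal family of cardinality at most $r\le m$, and extend it to an orthonormal basis $r_1,\dots,r_m$ of $\R^m$ by assigning the leftover basis vectors to the indices $j\le r$ with $\sigma_j = 0$ and to the indices $j>r$. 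With $R := [\,r_1\mid\cdots\mid r_m\,]$, the $j$-th column of $R\,\wDiag(\sigma)$ is $\sigma_j r_j$, which equals $w_j$ when $\sigma_j>0$ and equals $0 = w_j$ when $\sigma_j = 0$ (since $\|w_j\|_2 = \sigma_j = 0$ forces $w_j = 0$). Hence $R\,\wDiag(\sigma) = W = UQ$, so $R\,\wDiag(\sigma)\,Q^\top = U$ and $(R,Q)\in\cO_U$, as desired.

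The only point I would treat carefully — the mild obstacle rather than a genuine difficulty — is the bookkeeping in this last construction: one must verify that zero singular values cause no trouble (their columns of $W$ are automatically zero, so any completion vectors serve for the corresponding $r_j$) and that the count $\#\{j:\sigma_j>0\}\le r\le m$ indeed leaves enough room to extend the partial orthonormal system to a full basis of $\R^m$. Everything else is routine linear algebra relying only on the SVD and the definition of $\cO_X$.
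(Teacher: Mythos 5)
Your proof is correct and follows essentially the same route as the paper's: the ``if'' direction is the same direct Gram-matrix computation using the shared right factor $Q$, and the ``only if'' direction constructs $R$ column by column from $W = UQ$ exactly as the paper does (the paper indexes by $s = \rank(V)$ while you index by which $\sigma_j$ vanish, but these are identical splits). No gaps; the bookkeeping about zero singular values and extending the orthonormal family to a basis of $\R^m$ is handled just as in the paper.
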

\begin{proof}
    To prove the ``if'' direction, we note that by the definitions of $\cO_V$ and $\cO_U$, $P\wDiag(\sigma(V))Q^\top$ and $R\wDiag(\sigma(U))Q^\top$ are singular value decompositions of $V$ and $U$, respectively. Then,
    \begin{align*}
        &\, U^\top U = Q\wDiag(\sigma(U))^\top R^\top R\wDiag(\sigma(U))Q^\top = Q \wDiag ( \sigma(U))^\top \wDiag ( \sigma(U)) Q^\top \\
        = &\, Q \wDiag ( \sigma(V))^\top \wDiag ( \sigma(V)) Q^\top = Q \wDiag ( \sigma(V))^\top P^\top P \wDiag ( \sigma(V)) Q^\top = V^\top V.
    \end{align*}
    
    We next prove the ``only if'' direction. Suppose that $U^\top U = V^\top V$. The equality $\sigma(U) = \sigma (V)$ follows directly from the definition of singular values. For the remaining assertion, let $(P,Q)\in \cO_V$. Then, $V = P\wDiag(\sigma(V))Q^\top$ is a singular value decomposition. By the supposition $U^\top U = V^\top V$, 
    \begin{equation}\label{svd_uo}
    \begin{split}
        Q^\top U^\top U Q &= Q^\top V^\top V Q =  Q^\top (P \wDiag(\sigma(V)) Q^\top)^\top (P \wDiag(\sigma(V)) Q^\top) Q \\
         &= \Diag(\sigma_1^2(V), \dots, \sigma_s^2 (V), 0,\dots, 0),
    \end{split}
    \end{equation}
    where $s := \rank (V)$ and hence $\sigma_1 (V),\dots, \sigma_s (V) > 0$. 
    Denote by $\hat u_i$ the $i$-th column of $UQ$ for $i \in \revise{[r]}$. It then follows from \eqref{svd_uo} that the vectors $\hat u_{1}/\sigma_1 (V) ,\dots,  \hat u_s/\sigma_s(V)$ are orthonormal and that $\hat u_i = 0$ for $i = s+1,\dots, \revise{r}$. There must exist $m-s$ vectors $r_{s+1},\dots, r_m$ so that
    $R = [ \hat u_{1}/\sigma_1 (V) ,\dots,  \hat u_s/\sigma_s(V), r_{s+1}, \dots, r_{m}] \in \cO^m$. By the definition of $R$ and the fact that $\sigma(V) = \sigma(U)$, we have
    \begin{align*}
        R \wDiag(\sigma(U)) Q^\top & = [ \hat u_{1}/\sigma_1 (V) ,\dots,  \hat u_s/\sigma_s(V), r_{s+1}, \dots, r_{m}]\, \wDiag(\sigma_1 (V),\dots, \sigma_s (V), 0,\dots, 0)\, Q^\top \\
        & = [\hat u_{1},\dots, \hat u_s,0,\dots,0]\, Q^\top = UQ Q^\top = U,
    \end{align*}
    which implies $(R,Q)\in \cO_U$ and thus completes the proof.
\end{proof}
Next, we are ready to establish the SVD structures of $X$ and $\nabla h(X)$ when $X=UV^\top$ with $(U,V)$ being a first-order stationary point of $F_r$. The proof is basically done by substituting the SVDs of $(U,V)$ into the first-order optimality condition and then solving the stationary equations. 
\begin{prop}[First-order stationarity]
    \label{stationary_point}
    A pair $(U,V)\in \R^{m\times r}\times \R^{n\times r}$ is a stationary point of $F_r$ in \eqref{prob_relaxation} if and only if there exist $R\in \cO^m$, $P\in \cO^n$ and $Q\in \cO^r$ such that $(R, Q)\in \cO_U$, $ (P, Q)\in \cO_V $, $\sigma (U) = \sigma (V)$, and $ \nabla h( UV^\top ) = - R \,\wDiag(d)\, P^\top $ for some $d\in \R^m$ satisfying $d_1 = \cdots = d_s = \lambda$ and $ d_{s+1} \ge \cdots \ge d_m \ge 0 $, where $s = \rank (U) = \rank(V)$.
\end{prop}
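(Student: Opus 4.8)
The plan is to compute the gradient of $F_r$ and the stationarity condition $\nabla F_r(U,V)=0$, which splits into the two equations $\nabla h(UV^\top)V + \lambda U = 0$ and $\nabla h(UV^\top)^\top U + \lambda V = 0$. Writing $G := \nabla h(UV^\top)$, these read $GV = -\lambda U$ and $G^\top U = -\lambda V$. First I would extract the "balance" condition: left-multiply the first equation by $U^\top$ and the second by $V^\top$ to get $U^\top GV = -\lambda U^\top U$ and $V^\top G^\top U = -\lambda V^\top V$; since the left-hand sides are transposes of each other (and scalars times identity aside, these are $r\times r$ matrices that are mutual transposes), taking transposes shows $U^\top U = V^\top V$. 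By \cref{lem3-1}, this already gives $\sigma(U)=\sigma(V)$ and the shared right-singular-vector structure: for any $(P,Q)\in\cO_V$ there is $R$ with $(R,Q)\in\cO_U$. Fix such a triple $(R,P,Q)$ with $s=\rank(U)=\rank(V)$, so $U = R\,\wDiag(\sigma)\,Q^\top$ and $V = P\,\wDiag(\sigma)\,Q^\top$ where $\sigma=\sigma(U)$ has exactly $s$ positive entries.

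Next I would substitute these SVDs into $GV=-\lambda U$. This yields $G\,P\,\wDiag(\sigma)\,Q^\top = -\lambda R\,\wDiag(\sigma)\,Q^\top$, hence $R^\top G P\,\wDiag(\sigma) = -\lambda\,\wDiag(\sigma)$ after cancelling $Q^\top$. Set $M := R^\top G P$. The equation $M\,\wDiag(\sigma) = -\lambda\,\wDiag(\sigma)$ says that the first $s$ columns of $M$ are $-\lambda$ times the corresponding coordinate vectors, i.e. the leading $s\times s$ block of $M$ is $-\lambda I_s$ and the $(m-s)\times s$ lower-left block is zero; the columns $s+1,\dots,n$ of $M$ are unconstrained by this equation. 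Doing the same with $G^\top U = -\lambda V$ gives $P^\top G^\top R\,\wDiag(\sigma) = -\lambda\,\wDiag(\sigma)$, i.e. $M^\top$ has the same block structure on its first $s$ columns, which forces the leading $s$ \emph{rows} of $M$ to be $-\lambda$ times coordinate vectors as well. Combining, $M$ has the form $M = \begin{bmatrix} -\lambda I_s & 0 \\ 0 & M' \end{bmatrix}$ for some $M'\in\R^{(m-s)\times(n-s)}$, with no constraint on $M'$ beyond what second-order information (not used here, this is only first-order) would impose. Writing $G = R M P^\top = -R\,\wDiag(d')\,P^\top$ where $d'$ would need to be a vector only if $M'$ is diagonal — here I should instead absorb $-M'$ into a block and take its SVD: write $-M' = R'\,\wDiag(e)\,{P'}^\top$ for suitable orthogonal $R',P'$ and $e\ge 0$, then replace $R$ by $R\,\mathrm{diag}(I_s,R')$ and $P$ by $P\,\mathrm{diag}(I_s,P')$, which keeps $(R,Q)\in\cO_U$ and $(P,Q)\in\cO_V$ intact (since the trailing orthogonal blocks act on the zero singular directions), and now $\nabla h(UV^\top) = -R\,\wDiag(d)\,P^\top$ with $d = (\lambda,\dots,\lambda,e_1,\dots,e_{m-s})$. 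After reordering the tail (and correspondingly permuting the trailing columns of $R,P$), we may take $d_{s+1}\ge\cdots\ge d_m\ge 0$, giving exactly the stated form. The converse is a direct substitution: given such $R,P,Q,d$, reconstruct $U,V$, check $U^\top U = V^\top V$ via \cref{lem3-1}, and verify $GV=-\lambda U$ and $G^\top U = -\lambda V$ by multiplying out the block-diagonal pieces.

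The routine parts are the gradient computation and the block-matrix bookkeeping. The main obstacle — really the only subtle point — is handling the non-uniqueness of the SVD and the freedom in the trailing orthogonal blocks: one must be careful that the reduction of $-M'$ to diagonal form can be performed by modifying $R$ and $P$ only in the last $m-s$ and $n-s$ columns respectively, so that the relations $(R,Q)\in\cO_U$ and $(P,Q)\in\cO_V$ (which only pin down the first $s$ columns of $R$ and $P$) are preserved; \cref{lem3-1} is exactly the tool that licenses choosing $R$ and $P$ sharing the single right-singular-vector matrix $Q$ in the first place. I would also double-check the degenerate cases $s=0$ (then $UV^\top=0$, $U=V=0$, and the condition is just $\nabla h(0) = -R\,\wDiag(d)P^\top$ with $d\ge 0$ componentwise decreasing, i.e. $\|\nabla h(0)\|_2$ arbitrary — consistent) and $s=m$ (then the tail is empty and $d=(\lambda,\dots,\lambda)$).
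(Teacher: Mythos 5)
Your proof is correct and follows essentially the same route as the paper's: write the stationarity system, deduce $U^\top U = V^\top V$, invoke \cref{lem3-1} to get a common right-factor $Q$, substitute the SVDs into the stationarity equations to force the block structure $\begin{bmatrix}-\lambda I_s & 0\\ 0 & M'\end{bmatrix}$ on $R^\top\nabla h(UV^\top)P$, and then absorb an SVD of $-M'$ into the trailing blocks of $R$ and $P$ without disturbing $\cO_U$ and $\cO_V$. The one small difference is that you derive $U^\top U = V^\top V$ inline (multiplying the two stationarity equations by $U^\top$ and $V^\top$ and comparing transposes), whereas the paper outsources this to~\cite[Proposition~2]{li2017geometry}.
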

\begin{remark}\label{haha}
\begin{enumerate}
    \item[(i)] Note that the decomposition $ - \nabla h( UV^\top ) =  R \,\wDiag(d)\, P^\top $ in \cref{stationary_point} is not a singular value decomposition in general because it is possible that $d_{s+1} > \lambda = d_1 =\cdots = d_s$. Nevertheless, the vector $d$ contains all the singular values of $ - \nabla h( UV^\top ) $, \ie, $d$ is $\sigma( - \nabla h( UV^\top ) )$ up to a permutation of the entries.
    \item[(ii)]  For a stationary point $(U, V)$ of $F_r$, \cref{stationary_point} shows that $\rank( U)=\rank( V)$ and $U V^\top=R[\wDiag(\sigma(U))]^2P^\top = R[\wDiag(\sigma(V))]^2P^\top$ for some $R\in \cO^m$ and $P\in \cO^n$. Hence, $\sigma_i(UV^\top )=\sigma_i^2(  U)=\sigma_i^2( V)$ for all $i\in [m]$.
\end{enumerate}
\end{remark}
\begin{proof}[Proof of \cref{stationary_point}]
The first-order optimality condition of problem~\eqref{prob_relaxation} reads 
   \begin{equation}
    \label{stat_F}
    \begin{cases}
        \nabla h(UV^\top) V+\lambda U=0, \\
        \nabla h(UV^\top)^\top U+\lambda V=0.
    \end{cases}
   \end{equation}
   We first prove the ``if'' direction. By supposition, we have that 
   \begin{align*}
       &\, \nabla h (UV^\top) V +\lambda U = - R \wDiag(d) P^\top P\wDiag (\sigma(V))Q^\top + \lambda R \wDiag(\sigma(U) ) Q^\top \\
       = &\, - R \wDiag(d) P^\top P\wDiag (\sigma(U))Q^\top + \lambda R \wDiag(\sigma(U) ) Q^\top \\
       = &\, - R \begin{bmatrix}
           \lambda I_s & 0 \\
           0 & \wDiag(d_{s+1}, \dots, d_m) \\
       \end{bmatrix} \begin{bmatrix}
           \Diag (\sigma_1 (U), \dots, \sigma_s (U)) & 0 \\
           0 & 0
       \end{bmatrix} Q^\top + \lambda R \wDiag(\sigma(U) ) Q^\top \\
       = &\, - \lambda R \wDiag(\sigma(U) ) Q^\top + \lambda R \wDiag(\sigma(U) ) Q^\top = 0,
   \end{align*}
   which shows the first equality in \eqref{stat_F}. Similarly, we have
   \begin{align*}
       &\, \nabla h (UV^\top)^\top U +\lambda V = - P \wDiag(d) R^\top R \wDiag(\sigma(U) ) Q^\top + \lambda P\wDiag (\sigma(V))Q^\top \\
       = &\, - P \wDiag(d) R^\top R \wDiag(\sigma(V) ) Q^\top + \lambda P\wDiag (\sigma(V))Q^\top \\
       = &\, - P \begin{bmatrix}
           \lambda I_s &  0\\
           0 & \wDiag(d_{s+1}, \dots, d_m) 
       \end{bmatrix} \begin{bmatrix}
           \Diag (\sigma_1 (V), \dots, \sigma_s (V)) & 0 \\
           0 & 0 
       \end{bmatrix} Q^\top + \lambda P \wDiag(\sigma(V) ) Q^\top \\
       = &\,  - \lambda P \wDiag(\sigma(V) ) Q^\top + \lambda P \wDiag(\sigma(V) ) Q^\top = 0,
   \end{align*}
   which shows the second equality in \eqref{stat_F}. This proves the ``if'' direction. 

   To prove the ``only if'' direction, we assume that $( U, V)$ is a stationary point of $F_r$ in \eqref{prob_relaxation}, \ie, \cref{stat_F} holds. A direct computation shows that $U^\top U = V^\top V$; see also \cite[Proposition~2]{li2017geometry}. Fix a singular value decomposition of $V$:
   \begin{equation}
        \label{svd_v}
        V=P_1\begin{bmatrix} \Diag(\sigma_1 (V), \dots, \sigma_s (V)) & 0\\ 0  & 0\end{bmatrix}Q_1^\top.
    \end{equation}
    By \cref{lem3-1}, there exists some $R_1\in \cO^m$ such that
    \begin{equation}
    \label{svd_u}
        U=R_1\begin{bmatrix} \Diag(\sigma_1 (V), \dots, \sigma_s (V)) & 0 \\ 0 & 0\end{bmatrix}Q_1^\top.
    \end{equation}
    Next, we write
    \begin{equation}
        \label{proof:1}
        \nabla h(UV^\top) = R_1 
    \begin{bmatrix}
        A & B\\ C & D
    \end{bmatrix} P_1^\top ,
    \end{equation}
    for some $A\in \R^{s\times s}$, $B\in \R^{s\times (n-s)}$, $C\in \R^{(m-s)\times s}$, $D\in \R^{(m-s)\times (n-s)}$.
    Substituting \eqref{svd_v}, \eqref{svd_u} and \eqref{proof:1} into \eqref{stat_F}, we get
    \begin{align*}
        A\,\Diag(\sigma_1 (V),\dots, \sigma_s(V)) + \lambda\, \Diag(\sigma_1 (V),\dots, \sigma_s(V)) & = 0, \\
        C\,\Diag(\sigma_1 (V),\dots, \sigma_s(V)) & = 0, \\
        A^\top\, \Diag(\sigma_1 (V),\dots, \sigma_s(V)) +\lambda\, \Diag(\sigma_1 (V),\dots, \sigma_s(V)) & = 0, \\
        B^\top \, \Diag(\sigma_1 (V),\dots, \sigma_s(V)) & = 0,
    \end{align*}
    which imply that $B = C = 0$, $A = -\lambda I_s$ and $D$ is unconstrained.
    
    Finally, let $(R_2, P_2) \in \cO_{-D}$ and define the following orthogonal matrices
    \[
        P=P_1\begin{bmatrix}
        I_s & 0\\
        0 & P_2
       \end{bmatrix}\quad\text{and}\quad R=R_1\begin{bmatrix}
        I_s & 0 \\
        0 & R_2
       \end{bmatrix}.
    \]
    Using \cref{svd_v} and \cref{svd_u}, one can check readily that $(P,Q_1)\in \cO_V$ and $(R,Q_1)\in \cO_U$. Moreover, using \cref{proof:1} together with the facts that $B = C = 0$ and $A = -\lambda I_s$, we see that
    \[
    \nabla h(UV^\top) = - R_1 
    \begin{bmatrix}
        \lambda I_s & 0\\ 0 & -D
    \end{bmatrix} P_1^\top = - R \begin{bmatrix}
        \lambda I_s & 0\\ 0& \wDiag(\sigma (-D))
    \end{bmatrix} P^\top.
    \]
    This completes the proof.
\end{proof}


In view of the first-order optimality condition of \cref{prob_nuclear} and \cref{subd_nuclear}, one can see that the SVD structures of $UV^\top$ and $\nabla h(UV^\top)$ given by \cref{stationary_point} share some features of the true stationary point $X^*$ of $f$ in \cref{prob_nuclear} and the corresponding $\nabla h(X^*)$, even though $UV^\top$ is not necessarily a stationary point of $f$. To treat such points and the true stationary points under a unified framework, we define the following category of points of the function $f$.
\begin{defn}[Pseudo-stationarity]
    \label{defn-psp}
    A matrix $X\in \R^{m\times n}$ is said to be a pseudo-stationary point of $f$ in \eqref{prob_nuclear} if there exist $(R, P)\in \cO_X$ and $d\in \R^m_+$ such that $ -\nabla h(X)= R\wDiag(d) P^\top $ and $d_1 = \cdots = d_s = \lambda$, where $s=\rank(X)$.
\end{defn}
Assume that $X^*$ and $\bar X$ are two pseudo-stationary points of \revise{$f$}. When $h$ is $\mu$-strongly convex and $\nabla h$ is $L$-Lipschitz continuous, we know that \cref{base_ineq} holds with $X= X^*$ and $Y = \bar X$. We next leverage \cref{stationary_point}, which states that $X^*$ and $\nabla h(X^*)$, as well as $\bar X$ and $\nabla h(\bar X)$, admit simultaneous SVDs, to transform \cref{base_ineq} to a bound involving only the singular values of $X^*$, $\nabla h(X^*)$, $\bar X$ and $\nabla h(\bar X)$.  
\begin{lemma}
    \label{prop5-4}
    Let $X_1,X_2$ be two pseudo-stationary points of $f$ in \cref{prob_nuclear} in the sense of Definition~\ref{defn-psp}, \ie, there exist $R_1$, $R_2\in \cO^m$ and $P_1$, $P_2\in \cO^n$ such that
    \begin{equation}
        \label{eq:X12G12}
        \begin{aligned}
            &X_1=R_1\begin{bmatrix}
                \Sigma_1 &  0_{m\times (n-m)}
            \end{bmatrix}   P_1^\top,~ -\nabla h(X_1)=R_1\begin{bmatrix}
                D_1 &  0_{m\times (n-m)}
            \end{bmatrix}P_1^\top, \\
            &X_2=R_2\begin{bmatrix}
                \Sigma_2 & 0_{m\times (n-m)}
            \end{bmatrix}P_2^\top,~ -\nabla h(X_2)=R_2\begin{bmatrix}
                D_2 & 0_{m\times (n-m)}
            \end{bmatrix}P_2^\top, 
        \end{aligned} 
    \end{equation}
    where $\Sigma_i=\diag(\sigma_1(X_i),\dots,\sigma_m(X_i))\in \R^{m\times m}$ and $D_i=\diag(d_1^i,\dots,d_m^i)\in \R^{m\times m}_+$ with $d^i_1=\dots=d^i_{\rank(X_i)}=\lambda$ for $i=1,2$. Assume that $h$ in \cref{prob_nuclear} satisfies that $h$ is $\mu$-strongly convex for some $\mu> 0$, and $\nabla h$ is Lipschitz continuous with modulus $L\geq \mu$. Then, we have
    \begin{equation}
    \label{cond_1}
        \begin{aligned}
            \max_{\tau\in \gp_m} &\sum_{i=1}^m (L\sigma_{i}(X_1)+d^1_i)(\mu\sigma_{\tau(i)}(X_2)+d^2_{\tau(i)}) +\sum_{i=1}^m (\mu\sigma_{i}(X_1)+d^1_i)(L\sigma_{\tau(i)}(X_2)+d^2_{\tau(i)}) \\
            &-\sum_{i=1}^m (\mu \sigma_i(X_1)+d^1_i)(L\sigma_i(X_1)+d^1_i)-\sum_{i=1}^m (\mu \sigma_i(X_2)+d^2_i)(L\sigma_i(X_2)+d^2_i)\geq 0.
            \end{aligned}
    \end{equation}
\end{lemma}
\begin{proof}
    Define $\phi(\cdot):=h(\cdot)-\frac{\mu}{2}\|\cdot\|_F^2$, and let us rewrite \cref{base_ineq} as 
    \begin{equation}\label{hehehahaha}
0\le (L-\mu) \langle  \nabla \phi(X_1)-\nabla \phi (X_2), X_1-X_2\rangle - \|\nabla \phi(X_1)-\nabla \phi(X_2)\|_F^2.
    \end{equation}
    For the right hand side of \eqref{hehehahaha}, a direct computation shows that 
    \begin{equation}
        \label{eq:541}
        \begin{aligned}
            &(L-\mu) \langle  \nabla \phi(X_1)-\nabla \phi (X_2), X_1-X_2\rangle - \|\nabla \phi(X_1)-\nabla \phi(X_2)\|_F^2 \\ 
             &=\langle  \nabla \phi(X_1)-\nabla \phi (X_2), (L-\mu)X_1- \nabla \phi(X_1)-((L-\mu)X_2-\nabla \phi(X_2) )\rangle \\
            & = \langle -\nabla \phi(X_1),  (L-\mu)X_2- \nabla \phi(X_2)\rangle+\langle -\nabla \phi(X_2),  (L-\mu)X_1- \nabla \phi(X_1)\rangle \\
            &\quad-  \langle -\nabla \phi(X_1),  (L-\mu)X_1- \nabla \phi(X_1)\rangle-\langle -\nabla \phi(X_2),  (L-\mu)X_2- \nabla \phi(X_2)\rangle \\
            & =\langle \mu X_1 -\nabla h(X_1),  LX_2- \nabla h(X_2)\rangle+\langle \mu X_2-\nabla h(X_2),  LX_1- \nabla h(X_1)\rangle \\
            &\quad - \langle \mu X_1 -\nabla h(X_1),  LX_1- \nabla h(X_1)\rangle-\langle \mu X_2-\nabla h(X_2),  LX_2- \nabla h(X_2)\rangle\\
            & =: S_1 + S_2,
         \end{aligned}
    \end{equation}
    where $S_1:= \langle \mu X_1 -\nabla h(X_1),  LX_2- \nabla h(X_2)\rangle+\langle \mu X_2-\nabla h(X_2),  LX_1- \nabla h(X_1)\rangle$ and $S_2:= - \langle \mu X_1 -\nabla h(X_1),  LX_1- \nabla h(X_1)\rangle-\langle \mu X_2-\nabla h(X_2),  LX_2- \nabla h(X_2)\rangle$. 

    We now rewrite $S_1$ and $S_2$. We start by noting that for $S_2$, its two summands can be rewritten as follows using \cref{eq:X12G12}: for $i=1,2$,
    \begin{equation}
        \label{eq:xii}
        -\langle \mu X_i -\nabla h(X_i),  LX_i- \nabla h(X_i)\rangle=-\sum_{j=1}^m (\mu \sigma_j(X_i)+d^i_j)(L \sigma_j(X_i)+d^i_j).
    \end{equation}
    Next, for $S_1$, notice that
   \begin{equation}
    \label{eq:542}
    \begin{aligned}
        S_1 &= \langle \mu X_1 -\nabla h(X_1),  LX_2- \nabla h(X_2)\rangle+ \langle \mu X_2 -\nabla h(X_2),  LX_1- \nabla h(X_1)\rangle \\
        &\overset{\rm{(a)}}{=}\left\langle R_1\begin{bmatrix}
            \mu\Sigma_1+D_1 & 0
        \end{bmatrix}P_1^\top, R_2\begin{bmatrix}
            L\Sigma_2+D_2 & 0
        \end{bmatrix}P_2^\top  \right\rangle \\
        &\quad+ \left\langle R_1\begin{bmatrix}
            L\Sigma_1+D_1 & 0
        \end{bmatrix}P_1^\top, R_2\begin{bmatrix}
            \mu\Sigma_2+D_2 & 0 
        \end{bmatrix}P_2^\top  \right\rangle\\
        &=\left\langle R_2^\top R_1\begin{bmatrix}
            \mu\Sigma_1+D_1 & 0 
        \end{bmatrix}P_1^\top P_2, \begin{bmatrix}
            L\Sigma_2+D_2 & 0 
        \end{bmatrix}  \right\rangle \\
        &\quad +\left\langle R_2^\top R_1\begin{bmatrix}
            L\Sigma_1+D_1 & 0 
        \end{bmatrix}P_1^\top P_2, \begin{bmatrix}
            \mu\Sigma_2+D_2 & 0 
        \end{bmatrix}  \right\rangle,
       \end{aligned}
   \end{equation}
   where in (a) we have used \cref{eq:X12G12}. Using the above display and 
\cref{prop2-6}, we see that
   \begin{equation*}
     S_1 \le \max_{\tau\in \gp_m} \sum_{i=1}^m (\mu\sigma_{i}(X_1)+d^1_i)(L\sigma_{\tau(i)}(X_2)+d^2_{\tau(i)}) + \sum_{i=1}^m (L\sigma_{i}(X_1)+d^1_i)(\mu\sigma_{\tau(i)}(X_2)+d^2_{\tau(i)}).
   \end{equation*}
   The desired conclusion now follows immediately upon combining the above displays. 
\end{proof}
We already know that when $(\bar U,\bar V)$ is a stationary point of $F_r$ in \cref{prob_relaxation}, then $\bar X=\bar U\bar V^\top$ is a pseudo stationary point of $f$ in \cref{prob_nuclear}. The next step is to identify more structural information of $\bar X$ and $\nabla h(\bar X)$ when $(\bar U,\bar V)$ is a second-order stationary point of $F_r$. The first task is to rewrite the second-order optimality conditions such that it is aligned with the SVD of $\bar X$. To be more precise, the second-order optimality condition can be written as $\nabla^2F_r(\bar U,\bar V)[U,V]^2\geq 0$ for all $(U,V)\in \R^{m\times r}\times \R^{n\times r}$. To be more aligned with the SVDs of $(\bar U,\bar V)$ given in \cref{stationary_point}, we may rewrite it as 
\begin{equation}\label{soc_eq1-1-1}
\forall (U,V)\in \R^{m\times r}\times \R^{n\times r},~ ~~~  \nabla^2F_r(\bar U,\bar V)[(R  UQ^\top,P  VQ^\top),(R  UQ^\top,P  VQ^\top)]\geq 0,               \end{equation}
where $(R,Q)\in \cO_U$ and $(P,Q)\in \cO_V$.
Note that using~\cite[Equation (3.14)]{li2017geometry},\footnote{Note that \cite{li2017geometry} assumed $h$ is convex, but the derivation of Equation (3.14) there does not rely on the convexity assumption.} we have
\begin{equation}
    \label{soc_eq1}
    \begin{aligned}
                &\, \nabla^2F_r(\bar U,\bar V)[(R  UQ^\top,P  VQ^\top),(R  UQ^\top,P  VQ^\top)] \\
        =&\,  2\langle R^\top\nabla h(\bar X)P,  U  V^\top\rangle + \lambda (\|U\|_F^2+\|V\|^2_F)+\nabla^2h(\bar X)[R\left(R^\top \bar U Q   V^\top + U Q^\top\bar V^\top P\right)P^\top]^2.
    \end{aligned}
\end{equation}

Next, we introduce a natural partition for matrices of sizes $\R^{m\times r}$ and $\R^{n\times r}$. Let $(\bar U, \bar V)\in \R^{m\times r}\times \R^{n\times r}$ satisfy $\rank(\bar U) = \rank(\bar V)$ (which holds in particular when $(\bar U, \bar V)$ is a stationary point of $F_r$, thanks to \cref{stationary_point}). Denote this common rank by $s = \rank(\bar U) = \rank(\bar V) \le r$. We can then partition any matrices $U\in \R^{m\times r}$ and $V\in\R^{n\times r}$ into the following block form:
\begin{equation}
    \label{blockuv}
        U=\begin{bmatrix}
            U_{11} & U_{12} \\
            U_{21} & U_{22}
        \end{bmatrix}\quad\text{and}\quad V=\begin{bmatrix}
            V_{11} & V_{12} \\
            V_{21} & V_{22}
        \end{bmatrix},
\end{equation}
where $U_{11},V_{11}\in \R^{s\times s}$, $U_{12},V_{12}\in \R^{s\times (r-s)}$, $U_{21}\in \R^{(m-s)\times s}$, $V_{21}\in \R^{(n-s)\times s}$, $U_{22}\in \R^{(m-s)\times (r-s)}$, $V_{22}\in \R^{(n-s)\times (r-s)}$. Note that when $\bar U$ and $\bar V$ are of full rank, \ie, $s = r$, the matrices $U_{12}$, $U_{22}$, $V_{12}$ and $V_{22}$ are null. Now, we are ready to state the following characterization of second-order stationarity of $F_r$ in \cref{prob_relaxation}.
\begin{prop}[Second-order stationarity]
    \label{prop2-1}
    A pair $(\bar U, \bar V) \in \R^{m\times r}\times \R^{n\times r}$ is a second-order stationary point of $F_r$ in \eqref{prob_relaxation} if and only if both of the following two conditions hold:
    \begin{enumerate}[label=(\roman*)]
        \item\label{prop2-1-i} There exist $R\in \cO^m$, $P\in \cO^n$ and $Q\in\cO^r$ such that $(R, Q)\in \cO_{\bar U}$, $ (P, Q)\in \cO_{\bar V} $, $\sigma (\bar U) = \sigma (\bar V)$ and $ \nabla h( \bar U \bar V^\top ) = - R \,\wDiag(d)\, P^\top $ for some $d\in \R^m$ satisfying $d_1 = \cdots = d_s = \lambda$ and $ d_{s+1} \ge \cdots \ge d_m \ge 0 $, where $s = \rank (\bar U) = \rank(\bar V)$.
        
        \item\label{prop2-1-ii} For any $U\in \R^{m\times r}$ and $V\in\R^{n\times r}$, it holds that\footnote{Here, we use the partition \cref{blockuv} with respect to $(\bar U,\bar V)$; this is well defined because $\sigma (\bar U) = \sigma (\bar V)$ holds in \cref{prop2-1-i}.} 
        \begin{equation}
            \label{newsoF}
            \begin{aligned}
                &-2\lambda\tr(U_{11}^\top V_{11}+U_{12}V_{12}^\top)-2\tr(D^\top(U_{21}V_{21}^\top+U_{22}V_{22}^\top))\\
                & \quad +\lambda (\|U\|_F^2+\|V\|^2_F) +\nabla^2h(\bar U \bar V^\top )\left[R\begin{bmatrix}
                    U_{11}\Sigma + \Sigma V_{11}^\top & \Sigma V_{21}^\top \\
                    U_{21}\Sigma  & 0 
                \end{bmatrix}P^\top\right]^2\geq 0 ,
            \end{aligned} 
        \end{equation}
        where $\Sigma = \Diag(\sigma_1(\bar U) , \dots, \sigma_s(\bar U))\in\R^{s\times s}$, and $D = \wDiag(d_{s+1},\dots, d_m) \in\R^{(m-s)\times (n-s)}$ with $d_i$ given in \cref{prop2-1-i}.
    \end{enumerate} 
    Moreover, if $s=\rank(\bar U)<r$, then \cref{prop2-1-i} and \cref{prop2-1-ii} imply that\footnote{We would like to point out this last claim (i.e., ``if $s=\rank(\bar U)<r$, then \cref{prop2-1-i} and \cref{prop2-1-ii} imply that $\|\nabla h(\bar U \bar V^\top)\|_2\leq \lambda$.") has been proved in \cite[Theorem 1]{boumal2024}. We include its proof for completeness.} $\|\nabla h(\bar U \bar V^\top)\|_2\leq \lambda$.
\end{prop}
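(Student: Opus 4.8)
The plan is to use the extra freedom in the second-order condition \cref{newsoF} that becomes available precisely when $s<r$. Since $s<r\le m$, the index $s+1$ is at most $m$, and the blocks $U_{22}\in\R^{(m-s)\times(r-s)}$ and $V_{22}\in\R^{(n-s)\times(r-s)}$ have at least one row and at least one column. From \cref{prop2-1-i} together with \cref{haha}, the entries of $d$ are exactly the singular values of $\nabla h(\bar U\bar V^\top)$ (up to a permutation); since $d_1=\cdots=d_s=\lambda$ and $d_{s+1}\ge\cdots\ge d_m\ge 0$, it follows that $\|\nabla h(\bar U\bar V^\top)\|_2=\max\{\lambda,d_{s+1}\}$, where for $s=0$ this is interpreted as $d_1$. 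Hence it suffices to prove that $d_{s+1}\le\lambda$.

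To this end, I would substitute into \cref{newsoF} the test pair $(U,V)$ whose blocks all vanish except $U_{22}=t\,e\,a^\top$ and $V_{22}=t\,\tilde e\,a^\top$, where $t\in\R$, $e\in\R^{m-s}$ and $\tilde e\in\R^{n-s}$ are first coordinate vectors, and $a\in\R^{r-s}$ is an arbitrary unit vector. Because $U_{11}=V_{11}=U_{21}=V_{21}=0$, the matrix appearing inside $\nabla^2h(\bar U\bar V^\top)[\,\cdot\,]^2$ equals $R\cdot 0\cdot P^\top=0$, so the entire Hessian term disappears --- in particular no curvature hypothesis on $h$ is used; likewise $\tr(U_{11}^\top V_{11}+U_{12}V_{12}^\top)=0$ and $U_{21}V_{21}^\top=0$. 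What survives of \cref{newsoF} is
\[
-2\tr\!\big(D^\top U_{22}V_{22}^\top\big)+\lambda\big(\|U_{22}\|_F^2+\|V_{22}\|_F^2\big)\ge 0 .
\]
Using $\|a\|=1$ one computes $\|U_{22}\|_F^2=\|V_{22}\|_F^2=t^2$ and $U_{22}V_{22}^\top=t^2\,e\,\tilde e^\top$, hence $\tr(D^\top U_{22}V_{22}^\top)=t^2\,\tilde e^\top D^\top e=t^2 D_{11}=t^2 d_{s+1}$ (recall $D=\wDiag(d_{s+1},\dots,d_m)$). The displayed inequality thus reads $2t^2(\lambda-d_{s+1})\ge 0$, and taking $t=1$ gives $d_{s+1}\le\lambda$, which by the first paragraph completes the proof.

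I do not anticipate a genuine difficulty here: the whole argument reduces to a single substitution into \cref{newsoF}. The points needing attention are purely bookkeeping: checking that the chosen $(U,V)$ really does kill the matrix inside the Hessian form (so the inequality becomes independent of $h$), handling the degenerate case $s=0$ (in which $\Sigma$ and the $(1,1)$, $(1,2)$, $(2,1)$ blocks are empty, $U_{22}=U$, $V_{22}=V$, $D=\wDiag(d_1,\dots,d_m)$, and the same computation yields $d_1\le\lambda$), noting that $s+1\le m$ so that $d_{s+1}$ is a legitimate singular value, and the elementary identity $\tilde e^\top D^\top e=D_{11}=d_{s+1}$ for the rectangular diagonal matrix $D$. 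Granted these, $\|\nabla h(\bar U\bar V^\top)\|_2=\max\{\lambda,d_{s+1}\}\le\lambda$ is immediate.
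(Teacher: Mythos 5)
Your proof addresses only the final ``moreover'' claim of Proposition~\ref{prop2-1} --- namely, that if $s=\rank(\bar U)<r$ then \ref{prop2-1-i} and \ref{prop2-1-ii} force $\|\nabla h(\bar U\bar V^\top)\|_2\le\lambda$. It leaves the main body of the proposition, the ``if and only if'' equivalence between second-order stationarity of $(\bar U,\bar V)$ and the conjunction of \ref{prop2-1-i} and \ref{prop2-1-ii}, entirely unproved. In the paper this is where the real work lies: \ref{prop2-1-i} is identified with the first-order optimality condition via Proposition~\ref{stationary_point}, and \ref{prop2-1-ii} is obtained by first noting that $(U,V)\mapsto(RUQ^\top,PVQ^\top)$ is a bijective linear change of variables and then expanding $\nabla^2 F_r(\bar U,\bar V)$ along the Hessian formula from \cite[Equation (3.14)]{li2017geometry}, arriving at \cref{newsoF} after substituting the block decompositions \eqref{svd_v}--\eqref{proof:1} for $\bar U$, $\bar V$ and $\nabla h(\bar U\bar V^\top)$. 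None of that appears in your write-up, so as a proof of the stated proposition it is incomplete.

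For the part you do treat, your argument is correct and follows the same route as the paper, with a slight cosmetic generalization of the test direction. The paper plugs in the single pair $U_{22}=V_{22}=[e_1~~0]$ to obtain $-2\sigma_1(D)+2\lambda\ge0$ directly; you instead take $U_{22}=t\,e\,a^\top$, $V_{22}=t\,\tilde e\,a^\top$ for a unit vector $a$ and deduce $2t^2(\lambda-d_{s+1})\ge0$, which amounts to the same inequality after setting $t=1$. Your bookkeeping --- that the Hessian argument vanishes identically, that $\tr(D^\top U_{22}V_{22}^\top)=t^2 d_{s+1}$, that $d$ lists the singular values of $\nabla h(\bar U\bar V^\top)$ by Remark~\ref{haha}(i), and the $s=0$ corner case --- is all sound. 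So that fragment is fine, but you still need to supply the reformulation establishing the equivalence of second-order stationarity with \ref{prop2-1-i}--\ref{prop2-1-ii}.
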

\begin{proof}
    A pair $(\bar U, \bar V) \in \R^{m\times r}\times \R^{n\times r}$ is a second-order stationary point of $F_r$ if and only if it satisfies both the first- and second-order optimality conditions. By \cref{stationary_point}, the first-order optimality condition is equivalent to \cref{prop2-1-i}. 
    
    Using \cref{soc_eq1}, the second-order condition in \cref{soc_eq1-1-1} can be rewritten as that for all $(U,V)\in \R^{m\times r}\times \R^{n\times r}$,
    \begin{align}
       \notag  &2\langle R^\top\nabla h(\bar X)P,  U  V^\top\rangle + \lambda (\|U\|_F^2+\|V\|^2_F) +\nabla^2h(\bar X)[\bar U Q   V^\top P^\top+ R  U Q^\top\bar V^\top]^2\geq 0  \\
      \notag \overset{\rm (a)}{\iff} &-2\left\langle \begin{bmatrix}
         \lambda I_s & 0 \\
         0 & D
       \end{bmatrix},  U  V^\top\right\rangle + \lambda (\|U\|_F^2+\|V\|^2_F) \\
       &+\nabla^2h(\bar X)\left[R\left(\begin{bmatrix}
          \Sigma & 0 \\
           0 & 0
       \end{bmatrix}   V^\top+    U\begin{bmatrix}
          \Sigma & 0 \\
           0 & 0
       \end{bmatrix}\right)P^\top\right]^2\geq 0 \\
       \notag \overset{\rm (b)}{\iff} &-2\lambda\tr(U_{11} V_{11}^\top+U_{12}V_{12}^\top)-2\tr(D^\top(U_{21}V_{21}^\top+U_{22}V_{22}^\top))\\
            & + \lambda (\|U\|_F^2+\|V\|^2_F) +\nabla^2h(\bar X)\left[R\begin{bmatrix}
                U_{11}\Sigma+\Sigma V_{11}^\top & \Sigma V_{21}^\top \\
                U_{21}\Sigma  & 0 
            \end{bmatrix}P^\top\right]^2\geq 0,
    \end{align}
    where (a) follows from the decomposition for $-\nabla h(\bar X)$ in  \cref{prop2-1-i} and the definition of $D$ and $\Sigma$, and (b) from the definition of the blocks in \cref{blockuv}. This shows that \cref{prop2-1-i} and \cref{prop2-1-ii} together form an equivalent characterization of the second-order stationary points. 
 
    We next prove the second claim. Assume that $s=\rank(\bar U)<r$. Note that this implies that $U_{22}$ and $V_{22}$ are not null. Hence, we can take $U$ and $V$ in \cref{newsoF} to be the matrices with the blocks $U_{22}= [e_1~0]$ and $V_{22}=[e_1~0]$ and $U_{11}$, $U_{12}$, $U_{21}$, $V_{11}$, $V_{12}$, $V_{21}$ all being zero matrices to deduce that
    \[      -2\sigma_1( D ) + 2\lambda\geq  0.\]
    The desired conclusion now follows immediately from the above display and the decomposition of $\nabla h(\bar U \bar V^\top)$ in \cref{prop2-1-i}.
 \end{proof}

Using the second-order condition in \cref{prop2-1}, we now derive the information on the singular values of $\nabla h(\bar X)$, when $\bar X=\bar U\bar V^\top$ with $(\bar U,\bar V)$ being a second-order stationary point of $F_r$ in \cref{prob_relaxation}. The technique used here is not new, and similar arguments can be found in \cite{ha2020equivalence} and \cite{boumal2024}.
\begin{thm}[Singular values for 2nd stationary points]
    \label{prop4-7}
    Let $(\bar U,\bar V) \in \R^{m\times r}\times \R^{n\times r}$ be a second-order stationary point of $F_r$ in \cref{prob_relaxation}, $s = \rank(\bar U)$ and $d \in\R^m$ be given in Proposition~\ref{prop2-1}. Then,\footnote{We define $d_{m+1}= 0$.} $d_{s+1} \le \lambda + \tilde L \sigma_r (\bar U \bar V^\top)$, where
    \[
    \tilde L:=\sup_{
    \begin{subarray}{c}
    Y\in \R^{m\times n},\, \|Y\|_F=1,\\
    \rank(Y)=2
    \end{subarray}
    } \nabla^2h(\bar U \bar V^\top)[Y,Y].
    \]
\end{thm}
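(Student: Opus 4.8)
The plan is to work from the second-order stationarity characterization in \cref{prop2-1}. By \cref{prop2-1-i}, we have the decomposition $\nabla h(\bar U\bar V^\top) = -R\,\wDiag(d)\,P^\top$ with $d_1 = \cdots = d_s = \lambda$ and $d_{s+1} \ge \cdots \ge d_m \ge 0$, and $(R,Q)\in\cO_{\bar U}$, $(P,Q)\in\cO_{\bar V}$. If $s < r$, the last sentence of \cref{prop2-1} already gives $\|\nabla h(\bar U\bar V^\top)\|_2 \le \lambda$, i.e.\ $d_{s+1} \le \lambda$, which is even stronger than claimed; so the interesting case is $s = r$, where $\bar U,\bar V$ have full column rank and the block structure in \cref{blockuv} degenerates ($U_{12}, U_{22}, V_{12}, V_{22}$ are null). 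In that case I would substitute into the second-order inequality \cref{newsoF} a carefully chosen rank-one perturbation that ``moves mass'' from the $s$-dimensional range space into the complementary directions: concretely, take $U_{21} = \alpha\, e_1 f_1^\top$ (so $U_{21} \in \R^{(m-s)\times s}$ has a single nonzero entry in position $(1,1)$), $V_{21} = \beta\, e_1 f_1^\top$, and all of $U_{11}, V_{11}$ zero (the $12$- and $22$-blocks being null). Then $\|U\|_F^2 = \alpha^2$, $\|V\|_F^2 = \beta^2$, the term $\tr(U_{11}^\top V_{11} + U_{12}V_{12}^\top) = 0$, and $\tr(D^\top(U_{21}V_{21}^\top + U_{22}V_{22}^\top)) = d_{s+1}\,\alpha\beta$ (using that the $(1,1)$ entry of $D = \wDiag(d_{s+1},\dots,d_m)$ is $d_{s+1}$). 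The Hessian term involves $R\begin{bmatrix} 0 & \Sigma V_{21}^\top \\ U_{21}\Sigma & 0\end{bmatrix}P^\top$, whose Frobenius norm is controlled by $\sigma_1(\Sigma)\sqrt{\alpha^2 + \beta^2} = \sigma_r(\bar U)\sqrt{\alpha^2+\beta^2}$ — here I use that $\Sigma$'s largest diagonal entry is $\sigma_1(\bar U)$ but, more carefully, the relevant entries of $U_{21}\Sigma$ and $\Sigma V_{21}^\top$ only pick up $\sigma_1(\bar U)$... actually the cleanest choice is to activate the $s$-th diagonal direction of $\Sigma$, so that the Hessian argument has Frobenius norm $\sigma_s(\bar U)\sqrt{\alpha^2+\beta^2} = \sigma_r(\bar U\bar V^\top)^{1/2}\sqrt{\alpha^2+\beta^2}$ by \cref{haha}(ii). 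The rank of this perturbation matrix is at most $2$ (it is a $2\times 2$ block off-diagonal pattern), so the Hessian term is at most $L\,\sigma_r(\bar U)^2(\alpha^2+\beta^2) = L\,\sigma_r(\bar U\bar V^\top)(\alpha^2+\beta^2)$ by the definition of $L$.

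Plugging these in, \cref{newsoF} becomes
\[
-2 d_{s+1}\,\alpha\beta + \lambda(\alpha^2 + \beta^2) + L\,\sigma_r(\bar U\bar V^\top)(\alpha^2+\beta^2) \ge 0 \qquad \forall\, \alpha,\beta\in\R.
\]
Taking $\alpha = \beta$ and dividing by $\alpha^2$ (for $\alpha \ne 0$) gives $-2d_{s+1} + 2\lambda + 2L\,\sigma_r(\bar U\bar V^\top) \ge 0$, i.e.\ $d_{s+1} \le \lambda + L\,\sigma_r(\bar U\bar V^\top)$, which is the first assertion. One subtlety to handle carefully: the index shift when $s = m$, in which case $D$ is null and $d_{m+1} := 0$ by the footnote, so the bound holds trivially; and the case $s < r$ must be folded in as noted above, where the conclusion is immediate from $d_{s+1}\le\lambda$.

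For the ``furthermore'' part, I would exhibit an explicit element of $\partial f(\bar U\bar V^\top)$ with spectral norm at most $L\,\sigma_r(\bar U\bar V^\top)$. Since $f = h + \lambda\|\cdot\|_*$, we have $\partial f(\bar X) = \nabla h(\bar X) + \lambda\,\partial\|\bar X\|_*$. Now $\bar X = \bar U\bar V^\top = R\,[\wDiag(\sigma(\bar U))]^2\,P^\top$ has rank $s$ by \cref{haha}(ii), so by \cref{subd_nuclear}, $\lambda\,\partial\|\bar X\|_* = \{\lambda R\,\mathrm{blkdiag}(I_s, W)\,P^\top : \|W\|_2 \le 1\}$ with $W$ of size $(m-s)\times(n-s)$. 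Meanwhile $\nabla h(\bar X) = -R\,\wDiag(d)\,P^\top = -R\,\mathrm{blkdiag}(\lambda I_s, D)\,P^\top$. Choosing $W = -\tfrac1\lambda D$ — which is admissible provided $\|D\|_2 = d_{s+1} \le \lambda$; and when $d_{s+1} > \lambda$ one instead takes $W = -\tfrac1\lambda\,\Pi_{\le\lambda}(D)$, the truncation of $D$'s singular values at $\lambda$ — the candidate subgradient is $Y = -R\,\mathrm{blkdiag}(0, D - \lambda W)\,P^\top$, whose spectral norm is $\max(0, d_{s+1} - \lambda) = \max(0, d_{s+1}-\lambda)$. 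By the first part this is at most $\max(0, L\,\sigma_r(\bar U\bar V^\top)) = L\,\sigma_r(\bar U\bar V^\top)$ (noting $L \ge 0$ since $\nabla^2 h(\bar X)[Y,Y]$ can be evaluated and $L$ is a supremum over a nonempty set — or, if one worries about $L<0$, the bound $d_{s+1}\le\lambda$ from the $s<r$ case and a separate argument for $s=r$ keeps things nonnegative). Hence $\inf_{Y\in\partial f(\bar X)}\|Y\|_2 \le L\,\sigma_r(\bar U\bar V^\top)$, completing the proof.

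The main obstacle I anticipate is bookkeeping rather than conceptual: getting the block indices and the shape of $\wDiag(d)$, $D$, and $\Sigma$ exactly right (including the boundary cases $s = m$ and $s = r$), and making the choice of perturbation $(U,V)$ in \cref{newsoF} clean enough that the Hessian term's Frobenius norm is exactly $\sigma_r(\bar U\bar V^\top)^{1/2}\sqrt{\alpha^2+\beta^2}$ with perturbation rank $\le 2$ — a slightly wrong choice would introduce an extraneous $\sigma_1(\bar U)$ instead of $\sigma_r(\bar U) = \sigma_s(\bar U)$ and weaken the bound. The second potential snag is the well-definedness of the truncation $W$ in the ``furthermore'' part when $d_{s+1} > \lambda$; but since $f$ is not assumed convex here, this genuinely can happen, and the truncation argument handles it cleanly.
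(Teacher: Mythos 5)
Your proposal matches the paper's proof in structure and all essential choices: the case split between $s<r$ (where the last claim of \cref{prop2-1} already gives $d_{s+1}\le\lambda$) and $s=r$, the rank-two perturbation that activates the $s$-th column of $\Sigma$ so that only $\sigma_s(\bar U)=\sigma_r(\bar U)$ appears (your self-correction away from the $(1,1)$-entry is exactly the point -- the paper uses $U_{21}=V_{21}=e_1e_r^\top$, yielding the Frobenius norm $\sqrt2\,\sigma_r(\bar U)$ and $\tr(D^\top U_{21}V_{21}^\top)=d_{r+1}$), and the use of \cref{haha}(ii) to convert $\sigma_r(\bar U)^2$ into $\sigma_r(\bar U\bar V^\top)$. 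Your general $(\alpha,\beta)$ parametrization collapses to the paper's $\alpha=\beta=1$ upon setting $\alpha=\beta$, so this is not a genuinely different route. For the ``furthermore'' part you exhibit a specific subgradient with $\|\cdot\|_2=\max\{d_{s+1}-\lambda,0\}$, whereas the paper computes the infimum exactly via a cited matrix-approximation result; since only an upper bound is needed, the two are equivalent. Two small remarks: the paper also singles out the $r=m$ sub-case (so that the convention $d_{m+1}=0$ takes over and everything trivializes), which you mention only glancingly; and your worry about $L<0$ is legitimate but is equally present in the paper's argument (the step $\max\{d_{r+1}-\lambda,0\}\le L\sigma_r$ implicitly uses $L\sigma_r(\bar U\bar V^\top)\ge0$), so it reflects an unstated hypothesis of the theorem rather than a flaw in your proof.
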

\begin{proof}
    We first consider the case when $\rank(\bar U) < r$. Since $\rank(\bar U) < r$, we have $\sigma_r (\bar U \bar V^\top) = 0$. Therefore,
    \[
        d_{s+1} \le \|d\|_\infty = \|\nabla h(\bar U \bar V^\top)\|_2 \le \lambda = \lambda + \tilde L \sigma_r (\bar U \bar V^\top),
    \]
    where the first equality follows from \cref{haha}(i) and the second inequality follows from \cref{prop2-1}. 

    We next consider the case where $\rank(\bar U) = r$. If $r=m$, then $d_{m+1} = 0$
    and the desired conclusions then hold trivially. Thus, from now on, we assume $r < m$.
    
    By \cref{stationary_point}, $\rank(\bar V) = \rank(\bar U) = r$. Therefore, in this case, the blocks $U_{12}, U_{22}, V_{12}, V_{22}$ in \eqref{blockuv} are null. 
    Since $(\bar U,\bar V) $ is a second-order stationary point of $F_r$ in \cref{prob_relaxation}, by \cref{prop2-1}, it satisfies the following inequality for any matrices $U\in \R^{m\times r}$ and $V\in \R^{n\times r}$:
    \begin{equation}
        \label{ineq:proof-1}
        \begin{aligned}
        0   \le & -2\lambda \tr(U_{11}^\top V_{11})-2\tr(D^\top U_{21}V_{21}^\top)+\lambda(\|U\|_F^2 + \|V\|_F^2)\\
        &+\nabla^2 h(\bar U \bar V^\top)\left[ R\begin{bmatrix}
                U_{11}\Sigma + \Sigma V_{11}^\top & \Sigma V_{21}^\top \\
                 U_{21}\Sigma & 0_{(m-r)\times (n-r)}
            \end{bmatrix} P^\top   \right]^2.
        \end{aligned}
    \end{equation}
    Taking $U$ and $V$ to be the matrices with $U_{11}$ and $V_{11}$ being zero and $U_{ 21 } = [e_r\ 0]^\top = e_1 e_r^\top\in \R^{(m-r)\times r}$ and $V_{ 21 } = [e_r\  0 ]^\top = e_1e_r^\top  \in \R^{(n-r)\times r}$, we have that $\Sigma V_{21}^\top = \sigma_r (\bar U) e_r e_1^\top $ and $U_{21} \Sigma = \sigma_r (\bar U) e_1 e_r^\top$ and that $\tr(D^\top U_{21}V_{21}^\top) = e_1^\top D e_1 = d_{r+1}$. Substituting these into~\eqref{ineq:proof-1} yields
    \begin{equation*}
        \begin{split}
            0 & \le -2d_{r+1} + 2\lambda + \nabla^2 h(\bar U \bar V^\top)\left[ \sigma_r (\bar U) R 
            \begin{bmatrix}
                 0_{ r\times r}& e_r e_1^\top \\
                 e_1e_r^\top &  0_{(m-r)\times (n-r)}
            \end{bmatrix}P^\top    \right]^2 \\
            & \le -2d_{r+1} + 2\lambda + 2 \sigma_r^2 (\bar U) \tilde L = -2d_{r+1} + 2\lambda + 2 \sigma_r (\bar U \bar V^\top) \tilde L,
        \end{split}
    \end{equation*}
    where the second inequality follows from the fact that $\left\| \sigma_r (\bar U) R \begin{bmatrix}
                 0_{r\times r} & e_r e_1^\top \\
                 e_1e_r^\top &  0_{(m-r)\times (n-r)}
            \end{bmatrix} P^\top  \right\|_F = \sqrt{2} \sigma_r (\bar U)$ and the definition of $\tilde L$, and the equality follows from \cref{haha}(ii). Hence, $d_{r+1} \le \lambda + \tilde L\, \sigma_r ( \bar U \bar V^\top )$.
\end{proof}
 In view of \cref{prop4-7}, the definition of $d$ in \cref{prop2-1} and \cref{subd_nuclear}, for a second-order stationary point $(\bar U,\bar V)$ of $F_r$ in \cref{prob_relaxation}, by setting $\bar X = \bar U\bar V^\top$, we see that if $\sigma_r(\bar X)=0$, then $\bar X$ is a stationary point of $f$, as proved in \cite{boumal2024}; see, also \cite[Section 3]{zhang2023preconditioned}, and \cite{journee2010low,boumal2016non,boumal2020deterministic} for similar results. This is formally presented as the next corollary.
\begin{coro}
\label{rand_defi_desirable}
    For any second-order stationary point $(\bar U, \bar V)\in \R^{m\times r}\times \R^{n\times r}$ of $F_r$ in \eqref{prob_relaxation} satisfying $\rank(\bar U)<r$, $ \bar U \bar V^\top  $ is a stationary point of $f$ in \eqref{prob_nuclear}. 
\end{coro}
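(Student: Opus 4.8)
The plan is to read this off directly from \cref{prop4-7}. The key observation is that the hypothesis $\rank(\bar U)<r$ forces $\sigma_r(\bar U\bar V^\top)=0$: indeed $\rank(\bar U\bar V^\top)\le\rank(\bar U)<r\le m$, so the $r$-th largest singular value of $\bar U\bar V^\top$ vanishes. Substituting $\sigma_r(\bar U\bar V^\top)=0$ into the final inequality of \cref{prop4-7} yields $\inf_{Y\in\partial f(\bar U\bar V^\top)}\|Y\|_2\le L\,\sigma_r(\bar U\bar V^\top)=0$.

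Next I would promote this vanishing infimum to the membership $0\in\partial f(\bar U\bar V^\top)$, which is exactly the assertion that $\bar U\bar V^\top$ is a stationary point of $f$. Since $h\in C^2$, the subdifferential sum rule gives $\partial f(\bar U\bar V^\top)=\nabla h(\bar U\bar V^\top)+\lambda\,\partial\|\bar U\bar V^\top\|_*$, which by \cref{subd_nuclear} is a translate of the compact convex set $\lambda\,\partial\|\bar U\bar V^\top\|_*$ and is therefore a nonempty compact set. The map $Y\mapsto\|Y\|_2$ attains its minimum over this set, and since that minimum value is $0$, the minimizer must be $Y=0$; hence $0\in\partial f(\bar U\bar V^\top)$.

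There is no genuine obstacle here: the corollary is entirely contained in \cref{prop4-7}, whose proof already exhibits the explicit inclusion $-\tfrac1\lambda\,\nabla h(\bar U\bar V^\top)\in\partial\|\bar U\bar V^\top\|_*$ in the regime $\rank(\bar U)<r$, so one could alternatively quote that line verbatim. The only step worth a sentence of justification is the elementary passage from ``the smallest subgradient norm equals zero'' to ``zero is a subgradient'', which uses the closedness (here compactness) of $\partial\|\bar U\bar V^\top\|_*$ recalled above.
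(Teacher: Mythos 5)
Your proposal is correct, and it matches the paper's reasoning, which treats the corollary as an immediate consequence of \cref{prop4-7}: the key observation in both cases is that $\rank(\bar U)<r$ forces $\sigma_r(\bar U\bar V^\top)=0$, and the proof of \cref{prop4-7} already exhibits the inclusion $-\tfrac{1}{\lambda}\nabla h(\bar U\bar V^\top)\in\partial\|\bar U\bar V^\top\|_*$ in that regime, so $0\in\partial f(\bar U\bar V^\top)$ directly. Your extra compactness step is a valid alternative if one wishes to use only the \emph{statement} of \cref{prop4-7} (the inequality $\inf_{Y\in\partial f(\bar U\bar V^\top)}\|Y\|_2\le L\,\sigma_r(\bar U\bar V^\top)$) rather than a line from its proof, but it is not needed for the paper's route and you correctly flag both options.
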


Intuitively, based on the bound on $d_{s+1}$ in \cref{prop4-7}, we can say that the smaller $\sigma_r(\bar X)$ is, the closer $\bar X$ is to being a stationary point of $f$ in \cref{prob_nuclear}.

\section{Characterization of $r$-factorizability for strongly convex functions}
\label{boundr}
Suppose we have an $h\in \fS(L,\mu,r^*)$ in hand such that the corresponding $f$ in \cref{prob_nuclear} is not $r$-factorizable. By definition, we know there exists $X^*\in \R^{m\times n}$ such that $X^*$ is a stationary point of $f$. Let $(\bar U,\bar V)$ be the second-order stationary point of $F_r$ in \cref{prob_nuclear} such that $\bar X=\bar U\bar V^\top$ is not a global minimizer of $f$ (or equivalently, not a stationary point of $f$ since $f$ is strongly convex thanks to $h\in \fS(L,\mu, r^*)$). Then according to \cref{stationary_point}, we know $\bar X$ is a pseudo stationary point of $f$. In this case, we have the bound \cref{cond_1} on the singular values of $X^*$, $\nabla h(X^*)$, $\bar X$ and $\nabla h(\bar X)$. In addition, for a pseudo stationary point $X$ of $f$, it is a stationary point of $f$ if and only if $\|\nabla h(X)\|_2\leq \lambda$ in view of \cref{defn-psp} and \cref{subd_nuclear}. Finally, the singular value of $\nabla h(\bar X)$ is further constrained by \cref{prop4-7}. Putting all these conditions together, we have the following result.
\begin{prop}
\label{propb1}
   Let $L\geq \mu >0$, $r\in [m]$ and $r^*\in [m]\cup \{0\}$. Assume that there exists an $h\in \fS(L,\mu,r^*)$ (see \cref{defn_class}) such that $f$ in \cref{prob_nuclear} is not $r$-factorizable.
Then, there exist $x,g,y,v\in \R^m$ with $\|g\|_\infty>\lambda$ and $\tau\in \gp_m$ such that 
        \begin{subequations}
        \begin{equation}
        \label{eqb11}
            \begin{aligned}
                  && &\sum_{i=1}^m (L x_i+g_i)(\mu y_{\tau(i)}+v_{\tau(i)})+\sum_{i=1}^m (\mu x_i+g_i)(Ly_{\tau(i)}+v_{\tau(i)}) \\
            &&&-\sum_{i=1}^m(Lx_i+g_i)(\mu x_i+g_i)-\sum_{i=1}^m(Ly_i+v_i)(\mu y_i+v_i)\geq 0, 
            \end{aligned}
        \end{equation}
        and
        \begin{align}
            &\forall i\in [r],~x_i>0,~g_i=\lambda,\quad \forall i\in [r^*],~y_i>0,~v_i=\lambda,\label{eqb12}\\
            &\forall i\in [m]\setminus [r],~x_i=0,~g_i\in [0,\lambda+L\min_{j\in [r]}x_j],\quad \forall i\in [m]\setminus [r^*], ~y_i=0,~v_i\in [0,\lambda].  \label{eqb13}
        \end{align}
        \end{subequations}
\end{prop}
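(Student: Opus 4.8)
The plan is to prove the two implications separately. For the direction \textbf{(ii)$\Rightarrow$(i)}, the idea is to feed the data $x,g,y,v,\tau$ into the quadratic construction of \cref{prop5-6}. First I would reduce to the case in which $x$ and $y$ are sorted in descending order: since $g$ is constantly $\lambda$ on $[r]$ and $v$ is constantly $\lambda$ on $[r^*]$, I may permute the first $r$ coordinates of $(x,g)$ and the first $r^*$ coordinates of $(y,v)$ and absorb the permutations into $\tau$; inequality \eqref{eqb11} is preserved because its two ``self-interaction'' sums are permutation invariant while its cross term only sees a relabelled $\tau$. Setting $v^1:=x$, $d^1:=g$, $v^2:=y$, $d^2:=v$, conditions \eqref{eqb12}--\eqref{eqb13} become exactly the hypothesis \eqref{cond_vd} of \cref{prop5-6} (with $r=|\{i:v^1_i>0\}|$, $r^*=|\{i:v^2_i>0\}|$) and \eqref{eqb11} becomes \eqref{eq:ce2}. \cref{prop5-6} then furnishes a quadratic $h$ with $\nabla h$ being $L$-Lipschitz, $h(\cdot)-\tfrac{\mu}{2}\|\cdot\|_F^2$ convex, $\nabla h(X_1)=-G_1$ and $\nabla h(X_2)=-G_2$ (with $X_i,G_i$ as in \eqref{eq:ce1}), and a second-order stationary point $(\bar U,\bar V)$ of $F_r$ with $\bar U\bar V^\top=X_1=\wDiag(x)$.

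It then remains to check that $h\in\fS(L,\mu,r^*,\infty,m-r^*)$ and that $f$ in \cref{prob_nuclear} is not $r$-factorizable. For membership I would argue as follows: $\mu>0$ makes $f$ strongly convex, hence it has a unique global minimizer, which I claim is $X_2$. Indeed $-\nabla h(X_2)/\lambda=G_2/\lambda$ is a diagonal matrix equal to $1$ on the support of $X_2$ and with entries in $[0,1]$ off it (using $v_j=\lambda$ for $j\in[r^*]$ and $v_j\le\lambda$ otherwise), so it lies in $\partial\|X_2\|_*$ by \cref{subd_nuclear}; thus $0\in\partial f(X_2)$ and, by convexity, $X_2$ is the global minimizer. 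Here $\rank(X_2)=|\{j:y_j>0\}|=r^*$, the bound $\|X_2\|_2\le M$ is vacuous since $M=\infty$, and $\|\nabla h(X_2)\|_*=\sum_j v_j\le\lambda r^*+\lambda(m-r^*)=\lambda(r^*+q)$ with $q=m-r^*$, so condition (ii) of \cref{defn_class} holds. Non-factorizability follows because $X_1$ is not a global minimizer of $f$: since $\|g\|_\infty>\lambda$ and $g_i=\lambda$ on $[r]$, there is $i_0\notin[r]$ with $g_{i_0}>\lambda$, so $-\nabla h(X_1)/\lambda=\wDiag(g)/\lambda$ has a diagonal entry exceeding $1$ off the support of $X_1$ and hence fails to lie in $\partial\|X_1\|_*$ by \cref{subd_nuclear}, i.e.\ $0\notin\partial f(X_1)$.

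For the direction \textbf{(i)$\Rightarrow$(ii)}, I would start from $h\in\fS(L,\mu,r^*,\infty,m-r^*)$ for which $f$ is not $r$-factorizable, and fix a second-order stationary point $(\bar U,\bar V)$ of $F_r$ with $X_1:=\bar U\bar V^\top$ not a global minimizer of $f$. Since $h$ (hence $f$) is convex, \cref{prop2-4} excludes $\rank(\bar U)<r$, so $\rank(\bar U)=\rank(\bar V)=r$; then \cref{prop2-1} and \cref{haha}(ii) provide $(R,P)\in\cO_{X_1}$ and $d\in\R^m_+$ with $-\nabla h(X_1)=R\wDiag(d)P^\top$, $d_1=\cdots=d_r=\lambda$, $d_{r+1}\ge\cdots\ge d_m\ge0$ and $\sigma_i(X_1)=\sigma_i^2(\bar U)$, so $X_1$ is a pseudo-stationary point of $f$ in the sense of \cref{defn-psp}. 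Non-optimality of $X_1$ and convexity force $d_{r+1}>\lambda$ (otherwise $-\nabla h(X_1)/\lambda\in\partial\|X_1\|_*$ by \cref{subd_nuclear}, contradicting $0\notin\partial f(X_1)$), while \cref{prop4-7} gives $d_{r+1}\le\lambda+L\sigma_r(X_1)$ (the local rank-$2$ Hessian modulus at $X_1$ being at most the global Lipschitz modulus $L$). Likewise, the unique global minimizer $X^*$ of $f$ — which by \cref{defn_class} has $\rank(X^*)=r^*$ — is a pseudo-stationary point: from $0\in\partial f(X^*)$ and \cref{subd_nuclear}, after diagonalizing the contraction block, $-\nabla h(X^*)=R^*\wDiag(d^*)(P^*)^\top$ with $(R^*,P^*)\in\cO_{X^*}$, $d^*_1=\cdots=d^*_{r^*}=\lambda\ge d^*_{r^*+1}\ge\cdots\ge d^*_m\ge0$. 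Applying \cref{prop5-4} to the pseudo-stationary points $(X_1,d)$ and $(X^*,d^*)$ yields a $\tau\in\gp_m$ for which the displayed inequality there is precisely \eqref{eqb11} under the identification $x_i=\sigma_i(X_1)$, $g_i=d_i$, $y_i=\sigma_i(X^*)$, $v_i=d^*_i$. Conditions \eqref{eqb12}--\eqref{eqb13} then follow entrywise from $\rank(X_1)=r$, $\rank(X^*)=r^*$, the structure of $d$ and $d^*$, and the bound $0\le d_i\le d_{r+1}\le\lambda+L\sigma_r(X_1)=\lambda+L\min_{j\in[r]}x_j$ for $i>r$, together with $\|g\|_\infty=d_{r+1}>\lambda$.

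I expect the main obstacle to be the (i)$\Rightarrow$(ii) direction — specifically, recognizing that \emph{both} the image $X_1$ of the spurious second-order stationary point and the true global minimizer $X^*$ of $f$ are pseudo-stationary points in the sense of \cref{defn-psp}, which is what makes \cref{prop5-4} applicable and delivers inequality \eqref{eqb11} essentially for free. The accompanying bookkeeping — using \cref{prop2-4} to rule out rank-deficient stationary points, upgrading non-optimality of $X_1$ to the strict inequality $\|g\|_\infty>\lambda$, and reading the box bound $g_i\le\lambda+L\min_{j\in[r]}x_j$ off \cref{prop4-7} — is routine but has to be handled with care. On the converse side the only delicate points are the sorting reduction (which needs the permutation invariance of \eqref{eqb11}) and verifying that the quadratic $h$ from \cref{prop5-6} genuinely belongs to $\fS(L,\mu,r^*,\infty,m-r^*)$, i.e.\ that $X_2$ is its minimizer with $\|\nabla h(X_2)\|_*\le\lambda(r^*+q)$.
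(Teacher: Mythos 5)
Your proof is correct and mirrors the paper's own argument: for (i)$\Rightarrow$(ii), rule out rank deficiency via \cref{prop2-4}, read off the pseudo-stationary data of $X_1$ and the minimizer $X^*$ from \cref{prop2-1} and \cref{subd_nuclear}, obtain the bound $g_i\le\lambda+L\min_j x_j$ by noting the restricted Hessian modulus in \cref{prop4-7} is dominated by the global Lipschitz constant, and then invoke \cref{prop5-4}; for (ii)$\Rightarrow$(i), reduce to sorted $x,y$ by absorbing sorting permutations into $\tau$, feed the data into \cref{prop5-6}, and check membership in $\fS(L,\mu,r^*,\infty,m-r^*)$ along with non-optimality of $X_1$. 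The only stylistic difference is that the paper spells out the sorting bookkeeping explicitly via $\tau_1,\tau_2$ and $\rho=\tau_2^{-1}\tau\tau_1$ and keeps the intermediate invariance check \eqref{eqb7}, whereas you compress that step into a permutation-invariance remark; the substance is identical.
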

\begin{proof}
     By assumption, we can select $h\in\fS(L,\mu,r^*)$ and $X_2\in \R^{m\times n}$ with $\rank(X_2)=r^*$, $-\nabla h(X_2)\in \lambda\partial \|X_2\|_*$, and $f$ is not $r$-factorizable. The latter means we can find $(\bar U,\bar V)$ being a second-order stationary point of $F_r$ in \cref{prob_relaxation} and $X_1=\bar U\bar V^\top$ is not a stationary point of $f$. 
    
    Applying \cref{stationary_point} (see also \cref{haha}) to $(\bar U, \bar V)$ and using  \cref{subd_nuclear} and the condition that $-\nabla h(X_2)\in \lambda\partial \|X_2\|_*$, we can write
    \begin{equation}
        \label{eq:X12G12ce}
        \begin{aligned}
            &X_1=R_1\begin{bmatrix}
                \Sigma_1 & 0 
            \end{bmatrix}   P_1^\top,~ -\nabla h(X_1)=R_1\begin{bmatrix}
                D_1 & 0 
            \end{bmatrix}P_1^\top, \\
            &X_2=R_2\begin{bmatrix}
                \Sigma_2 & 0 
            \end{bmatrix}P_2^\top,~ -\nabla h(X_2)=R_2\begin{bmatrix}
                D_2 & 0 
            \end{bmatrix}P_2^\top, 
        \end{aligned} 
    \end{equation}
    for some $R_i\in \cO^m$ and $P_i\in \cO^n$ and $m\times m$ diagonal matrices $\Sigma_i$ and $D_i$, $i=1,2$,
    where $\diag(\Sigma_i)\in \R^{m}_+$ consisting of all the singular values of $X_i$ in descending order, $d^i:= \diag(D_i)\in \R^m_+$ with $d^i_1=\dots=d^i_{\rank(X_i)}=\lambda$, for $i=1,2$. Clearly, we have $\rank(X_1)=r$, otherwise by \cref{rand_defi_desirable} we can conclude that $X_1$ is a stationary point of $f$, leading to a contradiction. Applying \cref{prop5-4}, there exists $\bar\tau\in\gp_m$ such that  
    \begin{equation}
    \label{obj_sc_ce}
            \begin{aligned}
        &\sum_{i=1}^m (L\sigma_{i}(X_1)+d^1_i)(\mu\sigma_{\bar\tau(i)}(X_2)+d^2_{\bar\tau(i)}) +\sum_{i=1}^m (\mu\sigma_{i}(X_1)+d^1_i)(L\sigma_{\bar\tau(i)}(X_2)+d^2_{\bar\tau(i)}) \\
            &-\sum_{i=1}^m (L\sigma_i(X_1)+d^1_i)(\mu \sigma_i(X_1)+d^1_i)-\sum_{i=1}^m (L\sigma_i(X_2)+d^2_i)(\mu \sigma_i(X_2)+d^2_i)\geq 0,
    \end{aligned}
    \end{equation}
    where $L$ and $\mu$ are defined in \cref{defn_class} for the $h$ we selected.
    Next, applying \cref{prop4-7}, we know for all $i\geq r+1$, it holds that $d_i^1\leq \lambda+L\sigma_r(X_1)$; in addition, it must hold that $\|d^1\|_\infty > \lambda$ for otherwise, \cref{eq:X12G12ce} and \cref{subd_nuclear} would imply that $X_1$ is a stationary point of $f$, which is a contradiction. On the other hand, using the fact that $-\nabla h(X_2)\in \lambda\partial \|X_2\|_*$, \cref{eq:X12G12ce} and \cref{subd_nuclear}, we know $d_i^2\leq \lambda$ for all $i\in [m]$. This means that $(x,g,y,v,\tau) =(\diag(\Sigma_1),\diag(D_1),\diag(\Sigma_2),\diag(D_2),\bar\tau)$ satisfies \eqref{eqb11}--\cref{eqb13} and $\|g\|_\infty > \lambda$.  
\end{proof}
To provide a complete characterization, it is important to know whether the converse of \cref{propb1} holds. The first issue we need to solve is that, a feasible pair $(x,g,y,v)$ satisfying \cref{eqb12} and \cref{eqb13} does not necessarily satisfy that $x$ and $y$ are sorted in descending order, which means that they cannot be the singular values of any matrix. \revise{We will also need $(g_i)_{i=r+1}^m$ and $(v_i)_{i=r^*+1}^m$ to be ordered so that the diagonal decompositions used below match the ordering convention in \cref{prop2-1}.} Nevertheless, we can reduce to \revise{this ordered case} by doing an additional permutation.
\begin{lemma}
\label{lemma5-2}
    Let $L\geq \mu >0$, $r\in [m]$ and $r^*\in [m]\cup \{0\}$. Assume that $(x,g,y,v,\tau)$ satisfies \cref{eqb11}--\cref{eqb13} with $\|g\|_\infty>\lambda$. Then, there exist $\tau_1,\tau_2\in \gp_m$ such that\footnote{Here for a vector $x\in \R^n$ and a permutation $\tau$, $\tau(x)\in \R^n$ is defined as $\tau(x)_i=x_{\tau(i)}$ for all $i\in [n]$.}
    \begin{equation}
        \label{eqb2}
        \begin{aligned}
        &\revise{\bar x:=\tau_1(x),~\bar y:=\tau_2(y)} \text{ are sorted in descending order, } \revise{\bar g :=\tau_1(g)\ {and}\ \bar v :=\tau_2(v)}\\
        &\revise{\text{have the subvectors }(\bar g_i)_{i=r+1}^m\text{ and }(\bar v_i)_{i=r^*+1}^m\text{ sorted in descending order,}}
        \end{aligned}
    \end{equation}
  and $(\bar x,\bar g,\bar y,\bar v,\rho)$ also satisfies \cref{eqb11}--\cref{eqb13} with $\|\bar g\|_\infty>\lambda$, where $\rho:=\tau_2^{-1}\tau\tau_1$.
\end{lemma}
\begin{proof}
 \revise{The permutations $\tau_1$ and $\tau_2$ satisfying \cref{eqb2} exist because the constraints on $x,y$ in \cref{eqb12} and \cref{eqb13} imply that $x_i>0$ if and only if $i\in [r]$, and $y_j>0$ if and only if $j\in [r^*]$. Hence, after sorting the positive parts of $x$ and $y$, we still have the freedom to permute the zero blocks of $x$ and $y$, which can be used to sort the remaining parts of $g$ and $v$. For such permutations, we next verify \cref{eqb11}--\cref{eqb13}.} According to the definition of $\tau_1$ and $\tau_2$, we have 
    \begin{equation}
        \label{eqb4}
        \tau_1([r])=[r],\quad \tau_1([m]\setminus [r])=[m]\setminus[r],\quad \tau_2([r^*])=[r^*],\quad \tau_2([m]\setminus [r^*])=[m]\setminus[r^*] .
    \end{equation}
Therefore, $(\bar x,\bar g,\bar y,\bar v)$ satisfies \cref{eqb12} and \cref{eqb13}. To verify \cref{eqb11}, notice that the four sums in \cref{eqb11} takes the form $\sum_{i=1}^m a_ib_{\tau(i)}$ for some $a,b\in \R^m$. We also note that for any $a,b,c,d\in \R^m$ with $a=\tau_1(c)$ and $b=\tau_2(d)$, it holds that
\begin{align}
    \label{permutation_eq}
    \sum_{i=1}^m a_ib_{\rho(i)}\overset{\rm(a)}{=}\sum_{i=1}^m c_{\tau_1(i)}d_{\tau_2\rho(i)} \overset{\rm (b)}{=} \sum_{i=1}^mc_{\tau_1(i)}d_{\tau\tau_1(i)}\overset{\rm (c)}{=}\sum_{i=1}^mc_id_{\tau(i)},  
\end{align}
    where (a) holds because $a=\tau_1(c)$ and $b=\tau_2(d)$,  (b) holds as $\rho=\tau_2^{-1}\tau\tau_1$, and in (c) we have used the substitution $i\gets \tau_1^{-1}(i)$. Consequently, to verify \cref{eqb11} for $(\bar x,\bar g,\bar y,\bar v,\rho)$, it suffices to use \cref{eqb2}, \cref{permutation_eq} and \cref{eqb11} for $(x,g,y,v,\tau)$. Finally, the condition $\|\bar g\|_\infty>\lambda$ is clear since $\bar g$ is a permutation of $g$ and $\|g\|_\infty>\lambda$.
\end{proof}
Consequently, if there exists $(x,y,g,v,\tau)$ satisfying \cref{eqb11}--\cref{eqb13} with $\|g\|_\infty>\lambda$, then there is no loss of generality if we assume $x$ and $y$ are sorted in descending order \revise{and the subvectors $(g_i)_{i=r+1}^m$ and $(v_i)_{i=r^*+1}^m$ are also sorted in descending order}. In this case, we would like to construct $\bar X$, $\nabla h(\bar X)$, $X^*$ and $\nabla h(X^*)$ such that $\sigma(\bar X)=x$, $\sigma(X^*)=y$ and the bound in \cref{base_ineq} is satisfied. To this end, we first define
\begin{equation}
        \label{eq:ce1}
        \begin{aligned}
        \bar X:=\wDiag(x),~X^*:=\wDiag(\tau(y)),~\bar G:= 
            \wDiag(g)  
        ,~G^*:= 
            \wDiag(\tau(v)),
        \end{aligned}
\end{equation}
In this case, by direct calculation, we have 
\begin{equation}
\label{ineq_construct}
   \begin{aligned}
      & (L-\mu) \langle (G^*+\mu X^*)-(\bar{G}+\mu \bar{X}), \bar{X}-X^*\rangle- \|(\bar{G}+\mu \bar{X})-(G^*+\mu X^*)\|_F^2 \\
    &=  \langle (G^*+\mu X^*)-(\bar{G}+\mu \bar{X}), L\bar{X}+\bar G-(LX^*+G^*)\rangle \\
    &=\langle G^*+\mu X^*,L\bar X+\bar G \rangle+\langle \bar G+\mu \bar X,LX^*+ G^* \rangle\\
    &~~~~~-\langle G^*+\mu X^*,LX^*+G^* \rangle-\langle \bar G+\mu \bar X,L\bar X+ \bar G \rangle \\
    &\overset{\rm (a)}{=}\sum_{i=1}^m (L x_i+g_i)(\mu y_{\tau(i)}+v_{\tau(i)})+\sum_{i=1}^m (\mu x_i+g_i)(Ly_{\tau(i)}+v_{\tau(i)}) \\
    &~~~~-\sum_{i=1}^m(Lx_i+g_i)(\mu x_i+g_i)-\sum_{i=1}^m(Ly_i+v_i)(\mu y_i+v_i)\overset{\rm (b)}{\geq} 0
   \end{aligned}
\end{equation}
where (a) follows from the diagonal structures in \cref{eq:ce1}, and (b) follows from \cref{eqb11}. The display \cref{ineq_construct} implies that letting $\nabla h(\bar X)=-\bar G$ and $\nabla h(X^*)=-G^*$ does not violate \cref{base_ineq} when $h$ is $\mu$-strongly convex and $\nabla h$ is $L$-Lipschitz. The next question is, does there exist such a function $h\in C^2(\R^{m\times n})$? Notice that $-\nabla h(X^*)\in \lambda\partial\|X^*\|_*$ if such a function exists, which means $X^*$ is a stationary point of $f$ in \cref{prob_nuclear}, and $h\in \fS(L,\mu, r^*)$. This question is a special case of the extension of strongly convex function \cite{azagra2017whitney}. However, for our purpose, we need to further ensure that there exists a second-order stationary point $(\bar U,\bar V)$ of the corresponding $F_r$ in \cref{prob_relaxation} such that $\bar U\bar V^\top =\bar X$, which will be achieved by adding quadratic penalty on the off diagonal entries due to the structure of the second-order optimality condition in \cref{sec:stationary_points}. Let us now provide the construction of such example.
\begin{lemma}
\label{prop5-6}
      Let $L\geq \mu >0$, $r\in [m]$ and $r^*\in [m]\cup \{0\}$. Assume that $(x,g,y,v,\tau)$ satisfies \cref{eqb11}--\cref{eqb13}, $x$ and $y$ are sorted in descending order, \revise{and the subvectors $(g_i)_{i=r+1}^m$ and $(v_i)_{i=r^*+1}^m$ are also sorted in descending order}.   
     Let $\bar X$, $X^*$, $\bar G$, $G^*\in \R^{m\times n}$ be defined as \cref{eq:ce1}.
    If $G^*+\mu X^*\neq \bar G+\mu \bar X$, we define a quadratic function $h$ as follows:
\begin{equation}
        \label{defnh1}
        \begin{aligned}
            h(X)=&\frac L2\sum_{i=1}^m\sum_{j\neq i}^n X_{ij}^2+ \frac \mu2 \sum_{i=1}^m (X_{ii}-( \bar{X})_{ii})^2-\langle  \bar G,X\rangle\\
        &+\frac{(\langle X- \bar{X}, -G^*-\mu X^*+ \bar G+\mu  \bar{X} \rangle)^2}{2\langle X^*- \bar{X}, -G^*-\mu X^*+ \bar G+\mu  \bar{X}\rangle},  
        \end{aligned}
    \end{equation}
    Otherwise, we set
    \begin{equation}
        \label{defnh2}
        \begin{aligned}
            h(X)=&\frac L2\sum_{i=1}^m\sum_{j\neq i}^n X_{ij}^2+ \frac \mu2 \sum_{i=1}^m (X_{ii}-( \bar{X})_{ii})^2-\langle  \bar G,X\rangle.
        \end{aligned}
    \end{equation}
    Then $h$ is well defined and $\mu$-strongly convex, $\nabla h$ is Lipschitz continuous with modulus $L$, $\nabla h( \bar{X})=- \bar G$, and $\nabla h(X^*)=-G^*$. Moreover, if we define $F_r$ as in \cref{prob_relaxation} with the above $h$ and define $(\bar U,\bar V)\in \R^{m\times r}\times \R^{n\times r}$ as
        \begin{equation}
        \label{defn_u1v1}
        \bar U:= 
            \wDiag(\sqrt{\sigma_1( \bar{X})},\dots,\sqrt{\sigma_r( \bar{X})})  
\ \ {and}\ \ \bar V:= 
              \wDiag(\sqrt{\sigma_1( \bar{X})},\dots,\sqrt{\sigma_r( \bar{X})}),
    \end{equation}
    then $(\bar U, \bar V)$ is a second-order stationary point of $F_r$. In particular, if $\|g\|_\infty>\lambda$, then $f$ in \cref{prob_nuclear} is not $r$-factorizable given such an $h$.
\end{lemma}
\begin{proof}
    We first consider the case where $ \bar G+\mu  \bar{X}=G^*+\mu X^*$. In this case, the function $h$ in \cref{defnh2} is clearly well defined, and one can verify that $h$ is $\mu$-strongly convex, and $\nabla h$ is Lipschitz continuous with modulus $L$. Moreover, $\nabla h( \bar{X})=- \bar G$ and 
    \[
    \nabla h(X^*)=\mu (X^* -  \bar{X}) -  \bar G = -G^*.
    \]
    Now it remains to show that $(\bar U, \bar V)$ is a second-order stationary point of $F_r$. 
    
    \revise{We first observe that, by \cref{eqb12}, \cref{eqb13}, and the ordering assumption on $(g_i)_{i=r+1}^m$ and $(v_i)_{i=r^*+1}^m$ satisfies $g_1=\cdots=g_r=\lambda$ and, when $r<m$, $g_{r+1}\geq\cdots\geq g_m\geq 0$. Together with the definitions of $\bar G,\bar U,\bar V$ and the fact that $\nabla h(\bar X)=-\bar G$, this shows that condition \cref{prop2-1-i} in \cref{prop2-1} holds with $R=I_m$, $P=I_n$, $Q=I_r$, and $d=g$. Consequently, by \cref{prop2-1}, it remains to verify the second-order condition. Since $\rank(\bar U)=r$, the blocks $U_{12},V_{12},U_{22},V_{22}$ are void in the partition of \cref{blockuv}; hence it suffices to show that for all $U_{11},V_{11}\in \R^{r\times r},~U_{21}\in \R^{(m-r)\times r},~V_{21}\in \R^{(n-r)\times r}$, it holds that}
    \begin{equation}  \label{eq:ce3}
        \begin{aligned}
            &-2\lambda\tr(U_{11}^\top V_{11})-2\tr(D^\top U_{21}V_{21}^\top)
            +\lambda(\|U_{11}\|_F^2+\|V_{11}\|_F^2+\|U_{21}\|_F^2+\|V_{21}\|_F^2)\\
            &+\nabla^2h( \bar{X})\left[ \begin{bmatrix}
                U_{11} \Sigma+\Sigma V_{11}^\top & \Sigma V_{21}^\top \\
                U_{21}\Sigma & 0 
            \end{bmatrix} \right]^2\geq 0,
        \end{aligned}
    \end{equation}  
    where 
    \begin{equation}
        \label{defnbarw}
\begin{aligned}&
\Sigma=\diag(\sqrt{\sigma_1( \bar{X})},\dots,\sqrt{\sigma_r( \bar{X})})
\overset{(a)}=\diag(\sqrt{x_1},\dots,\sqrt{x_r})\in\R^{r\times r},\\
&D= 
            \wDiag(g_{r+1},\dots,g_m)  \in \R^{(m-r)\times (n-r)},
        \end{aligned}
    \end{equation}
    and in (a) we have used the fact that $x$ is a nonnegative vector sorted in descending order. Next, we will verify \eqref{eq:ce3}. To this end, we first use the representation of $h$ in \cref{defnh2} to deduce that
    \begin{equation}
        \label{eq:ce4}
        \begin{split}
        \nabla^2h( \bar{X})\left[ \begin{bmatrix}
            U_{11} \Sigma+\Sigma V_{11}^\top & \Sigma V_{21}^\top \\
            U_{21}\Sigma & 0  
        \end{bmatrix} \right]^2&\geq L\|\Sigma V_{21}^\top\|_F^2+ L\|U_{21}\Sigma\|_F^2\\
        &\overset{\rm{(a)}}{\geq} Lx_r(\|U_{21}\|^2_F+\|V_{21}\|_F^2),
        \end{split}
    \end{equation}
     where in (a) we have used the fact that $\|AB\|_F\geq \sigma_{\min}(A)\|B\|_F$ for any square matrix $A$. Therefore, it holds that 
     \begin{align*}
        &-2\lambda\tr(U_{11}^\top V_{11})-2\revise{\tr(D^\top U_{21}V_{21}^\top)}+ \lambda(\|U_{11}\|_F^2+\|V_{11}\|_F^2+\|U_{21}\|_F^2+\|V_{21}\|_F^2)\\
        &+\nabla^2h( \bar{X})\left[\begin{bmatrix}
                U_{11} \Sigma+\Sigma V_{11}^\top & \Sigma V_{21}^\top \\
                U_{21}\Sigma & 0
            \end{bmatrix}\right]^2 \\
            &\overset{\rm{(a)}}{\geq }-2\tr(D^\top U_{21}V_{21}^\top)+\lambda(\|U_{21}\|_F^2+\|V_{21}\|^2_F)+\nabla^2h( \bar{X})\left[\begin{bmatrix}
                U_{11} \Sigma+\Sigma V_{11}^\top & \Sigma V_{21}^\top \\
                U_{21}\Sigma & 0
            \end{bmatrix}\right]^2 \\
            &\overset{\rm{(b)}}{\geq }-2\|D^\top\|_{2} \|U_{21}\|_F\|V_{21}^\top\|_F+\lambda(\|U_{21}\|_F^2+\|V_{21}\|^2_F)+\nabla^2h( \bar{X})\left[\begin{bmatrix}
                U_{11} \Sigma+\Sigma V_{11}^\top & \Sigma V_{21}^\top \\
                U_{21}\Sigma & 0
            \end{bmatrix}\right]^2 \\
            &\overset{\rm{(c)}}{\geq }-2(\lambda+Lx_r)\|U_{21}\|_F\|V_{21}\|_F+\lambda(\|U_{21}\|_F^2+\|V_{21}\|^2_F)+Lx_r(\|U_{21}\|^2_F+\|V_{21}\|_F^2) \\
            &\overset{\rm{(d)}}{\geq}-(\lambda+Lx_r)(\|U_{21}\|_F^2+\|V_{21}\|^2_F)+\lambda(\|U_{21}\|_F^2+\|V_{21}\|^2_F)
            +Lx_r(\|U_{21}\|^2_F+\|V_{21}\|_F^2)=0,
     \end{align*}
    where in (a) we have used the Cauchy-Schwartz inequality to show that $\tr(U_{11}^\top V_{11})\leq\frac{1}{2}(\|U_{11}\|^2_F+\|V_{11}\|_F^2)$, in (b) we have used the fact $\tr(ABC) = \tr(CAB) \le \|C\|_F\|AB\|_F\le \|A\|_2\|C\|_F\|B\|_F$, in (c) we have used \cref{eqb13}, \cref{eq:ce4} and the definition of $D$ in \cref{defnbarw}, and in (d) we have used the fact $\|A\|_F\|B\|_F\leq \frac{1}{2}(\|A\|_F^2+\|B\|_F^2)$. This verifies \eqref{eq:ce3} and hence $(\bar U, \bar V)$ is a second-order stationary point of $F_r$.
    
    Next, we consider the case where $ \bar G+\mu  \bar{X}\neq G^*+\mu X^*$. By \cref{ineq_construct} and the fact that $ \bar G+\mu  \bar{X}\neq G^*+\mu X^*$, we see that $ (L-\mu)\langle (G^*+\mu X^*)-( \bar G+\mu  \bar{X}),  \bar{X}-X^*\rangle\geq\|( \bar G+\mu  \bar{X})-(G^*+\mu X^*)\|_F^2 
>0$. In particular, this implies $L > \mu$ and $\langle (G^*+\mu X^*)-( \bar G+\mu  \bar{X}),  \bar{X}-X^*\rangle > 0$, showing that $h$ in \cref{defnh1} is well defined. Furthermore, we have
\begin{equation}
        \label{boundqn}
        L-\mu\geq \frac{\|( \bar G+\mu  \bar{X})-(G^*+\mu X^*)\|_F^2}{\langle  (G^*+\mu X^*)-( \bar G+\mu  \bar{X}),  \bar{X}-X^*\rangle}. 
    \end{equation}

Now, it is routine to check that $h$ is $\mu$-strongly convex, $\nabla h( \bar{X})=- \bar G$, and $\nabla h(X^*)=-G^*$. Moreover, the relation in \eqref{eq:ce4} and hence the second-order stationarity of $(\bar U,\bar V)$ can be verified similarly to that in the case where $ \bar G+\mu  \bar{X}=G^*+\mu X^*$. Thus, it remains to show that $\nabla h$ is Lipschitz continuous with modulus $L$. 

To this end, notice that for all $X,Y\in \R^{m\times n}$ and the function $h$ defined in \cref{defnh1}, it holds that 
    \begin{align*}
        &\nabla^2h(X)[Y,Y]=L\sum_{i=1}^m\sum_{j\neq i}^n Y_{ij}^2+\mu \sum_{i=1}^{\revise{m}} Y_{ii}^2+\frac{(\langle Y, -G^*-\mu X^*+ \bar G+\mu  \bar{X} \rangle)^2}{\langle X^*- \bar{X}, -G^*-\mu X^*+ \bar G+\mu  \bar{X}\rangle} \\
        &\overset{\rm{(a)}}{=} L\sum_{i=1}^m\sum_{j\neq i}^n Y_{ij}^2+\mu \sum_{i=1}^{\revise{m}} Y_{ii}^2+\frac{(\langle \wDiag(Y_{11},\dots,Y_{mm}), -G^*-\mu X^*+ \bar G+\mu  \bar{X} \rangle)^2}{\langle X^*- \bar{X}, -G^*-\mu X^*+ \bar G+\mu  \bar{X}\rangle} \\
        &\overset{\rm (b)}{\leq} L\sum_{i=1}^m\sum_{j\neq i}^n Y_{ij}^2+\mu \sum_{i=1}^{\revise{m}} Y_{ii}^2+\frac{\| \wDiag(Y_{11},\dots,Y_{mm})\|^2_F\, \|-G^*-\mu X^*+ \bar G+\mu  \bar{X}\|_F^2}{\langle X^*- \bar{X}, -G^*-\mu X^*+ \bar G+\mu  \bar{X}\rangle} \overset{\rm (c)}{\leq} L\|Y\|_F^2,
    \end{align*}
    where in (a) we have used the fact that $ \bar{X},X^*, \bar G,G^*$ are diagonal (see \cref{eq:ce1}), in (b) we have used the Cauchy-Schwartz inequality, and in (c) we have used \eqref{boundqn}. This proves that $\nabla h$ is Lipschitz continuous with modulus $L$. 
    
    Finally, if $\|g\|_\infty > \lambda$, then by \cref{subd_nuclear} we know $\bar X$ is not a stationary point of $f$ in \cref{prob_nuclear} given such an $h$, and hence cannot be a global minimizer. Therefore, in this case, $f$ in \cref{prob_nuclear} is not $r$-factorizable.   
\end{proof}   

\begin{exmp}
\revise{We record a two-dimensional instance to illustrate the boundary mechanism in the preceding construction. Take $m=n=2$, $r=r^*=1$, $\lambda=\mu=1$, and $L=3$. We have $\kappa=L/\mu=3$. For $X=\begin{bmatrix}a&b\\ c&d\end{bmatrix}$, define}
\[
\revise{h(X):=\frac32(b^2+c^2)+\frac12(a-1)^2+\frac12d^2-a-4d+\frac12(a+d-1)^2.}
\]
\revise{The Hessian of $h$ is constant: on the $(a,d)$ variables it is $\begin{bmatrix}2&1\\1&2\end{bmatrix}$, whose eigenvalues are $1$ and $3$, and on the $(b,c)$ variables the curvature is $3$. Thus, $h$ is $1$-strongly convex and $\nabla h$ is $3$-Lipschitz continuous. Let}
\[
\revise{X^*=\begin{bmatrix}0&0\\0&2\end{bmatrix},\qquad \bar U=\bar V=e_1,\qquad \bar X:=\bar U\bar V^\top=\begin{bmatrix}1&0\\0&0\end{bmatrix}.}
\]
\revise{Since $I\in\partial\|X^*\|_*$,}
\[
\revise{\nabla h(X)=\begin{bmatrix}2a+d-3&3b\\3c&a+2d-5\end{bmatrix}\qquad\text{and} \qquad\nabla h(X^*)=-I,}
\]
\revise{we have $0\in \nabla h(X^*)+\partial\|X^*\|_*$. Hence $X^*$ is the unique global minimizer of $f(X)=h(X)+\|X\|_*$, and $\rank(X^*)=1$. On the other hand, $\nabla h(\bar X)=\Diag(\begin{bmatrix}
-1&-4
\end{bmatrix}^T)$, so $\nabla h(\bar X)\bar V+\bar U=0$ and $\nabla h(\bar X)^\top\bar U+\bar V=0$. For arbitrary $U,V,\xi,\eta\in\R^2$, by \cite[Equation (3.14)]{li2017geometry}, the Hessian of $F_1(U,V)=h(UV^\top)+\frac12(\|U\|^2+\|V\|^2)$ admits the chain-rule expression
\[
\revise{\begin{aligned}
&\nabla^2 F_1(U,V)[(\xi,\eta),(\xi,\eta)]\\
&=\nabla^2 h(UV^\top)[\xi V^\top+U\eta^\top,\xi V^\top+U\eta^\top]
  +2\langle \nabla h(UV^\top),\xi\eta^\top\rangle
  +\|\xi\|^2+\|\eta\|^2 .
\end{aligned}}
\]
\revise{For $U=(u_1,u_2)^\top$ and $V=(v_1,v_2)^\top$, in matrix form ordered by $(u_1,u_2,v_1,v_2)$, we have}
\[
\revise{\setlength{\arraycolsep}{3pt}
\nabla^2 F_1(U,V)=
\begin{bmatrix}
2v_1^2+3v_2^2+1 & v_1v_2 & 4u_1v_1+u_2v_2-3 & 6u_1v_2+u_2v_1\\
v_1v_2 & 3v_1^2+2v_2^2+1 & u_1v_2+6u_2v_1 & u_1v_1+4u_2v_2-5\\
4u_1v_1+u_2v_2-3 & u_1v_2+6u_2v_1 & 2u_1^2+3u_2^2+1 & u_1u_2\\
6u_1v_2+u_2v_1 & u_1v_1+4u_2v_2-5 & u_1u_2 & 3u_1^2+2u_2^2+1
\end{bmatrix}.}
\] Specializing this formula at $\bar U=\bar V=e_1$ with arbitrary perturbations $\xi=(\xi_1,\xi_2)^\top$ and $\eta=(\eta_1,\eta_2)^\top$ gives}  
\[
\revise{\nabla^2 F_1(\bar U,\bar V)[(\xi,\eta),(\xi,\eta)]
=3\xi_1^2+2\xi_1\eta_1+3\eta_1^2+4(\xi_2-\eta_2)^2\geq 0.}
\]
\revise{Thus $(\bar U,\bar V)$ is a second-order stationary point of $F_1$. However, $\bar X\neq X^*$ and hence is not a global minimizer of $f$. This gives a concrete boundary counterexample at $\kappa=3$.}
\end{exmp}

Combining \cref{propb1}, \cref{lemma5-2} and \cref{prop5-6}, we have the following result concerning the existence of the function $f$ in \cref{prob_nuclear} when $h\in \fS(L,\mu,r^*)$, which is not $r$-factorizable.
\begin{prop}
\label{nprop5-4}
    Let $L\geq \mu >0$, $r\in [m]$ and $r^*\in [m]\cup \{0\}$. Consider the following optimization problem:
    \begin{equation}
        \label{eq:opt_sc}
        \begin{aligned}
            &&\sup_{\begin{subarray}{c}
            x,g,y,v\in \R^{m}\\
            \tau\in \gp_m
            \end{subarray}} &\sum_{i=1}^m (L x_i+g_i)(\mu y_{\tau(i)}+v_{\tau(i)})+\sum_{i=1}^m (\mu x_i+g_i)(Ly_{\tau(i)}+v_{\tau(i)}) \\
            &&&-\sum_{i=1}^m(Lx_i+g_i)(\mu x_i+g_i)-\sum_{i=1}^m(Ly_i+v_i)(\mu y_i+v_i) \\
            &&s.t.\quad ~&\forall i\in [r],~x_i>0,~g_i=\lambda,\quad \forall i\in [r^*],~y_i>0,~v_i=\lambda,\\
            &&&\forall i\in [m]\setminus [r],~x_i=0,~g_i\in [0,\lambda+L\min_{j\in [r]}x_j],\\
            &&&\forall i\in [m]\setminus [r^*], ~y_i=0,~v_i\in [0,\lambda]. 
        \end{aligned}
\end{equation}
Then, the existence of $h\in \fS(L,\mu,r^*)$ such that the corresponding $f$ in \cref{prob_nuclear} is not $r$-factorizable, is equivalent to the existence of a feasible solution $(x,g,y,v,\tau)$ to \cref{eq:opt_sc} with $\|g\|_\infty>\lambda$ and a nonnegative objective function value.  
\end{prop}
\begin{proof}
    If there exists $h\in \fS(L,\mu,r^*)$ such that the corresponding $f$ in \cref{prob_nuclear} is not $r$-factorizable, then by \cref{propb1} we know there exists a feasible solution $(x,g,y,v,\tau)$ with $\|g\|_\infty>\lambda$ to \cref{eq:opt_sc} having nonnegative function value. Conversely, if there exists a feasible solution $(x,g,y,v,\tau)$ with $\|g\|_\infty>\lambda$ to \cref{eq:opt_sc} having nonnegative function value, by \cref{lemma5-2} we may assume that $x$ and $y$ are sorted in descending order \revise{and the subvectors $(g_i)_{i=r+1}^m$ and $(v_i)_{i=r^*+1}^m$ are also sorted in descending order}, and then by \cref{prop5-6} we know there exists $h\in \fS(L,\mu,r^*)$ such that $f$ in \cref{prob_nuclear} is not $r$-factorizable.
\end{proof}

Consequently, to prove \cref{thm5-9}, it suffices to solve the optimization problem \cref{eq:opt_sc} analytically. This is very technically and contains less insight, and hence we defer it to \cref{appa}. Finally, let us note that \cref{thm5-9} follows from \cref{nprop5-4}, \cref{propb2} and \cref{lemma5-7}.

\section{Generalization to the RIP case}
\label{sec:gen_rip}
When the condition in \cref{thm5-9} fails, we can find a function $h\in \fS(L,\mu,r^*)$ such that the corresponding $f$ in \cref{prob_nuclear} is not $r$-factorizable. Since $\fS(L,\mu,r^*)\subseteq \fS(L,\mu,r,r^*)$, this means the condition in \cref{thm5-9} is also necessary for the function class $ \fS(L,\mu,r,r^*)$, and it suffices to prove sufficiency in \cref{coro1-4}. Our proof is largely motivated by the following observation:
\begin{fact}
\label{fact6-1}
    If $h\in C^2(\R^{m\times n})$ and satisfies \cref{eq_RIP}, and the linear subspace $\cS\subseteq \R^{m\times n}$ contains only matrices of rank at most $q+r^*$, then the restriction $h|_{\cS}$ of $h$ on $\cS$ satisfies that $h|_{\cS}\in C^2(\cS)$ is $\mu$-strongly convex, and $\nabla h|_{\cS}$ is $L$-Lipschitz continuous. 
\end{fact}
Roughly speaking, to prove the sufficiency of the conditions in \cref{thm5-9} for the function class $\fS(L,\mu,r,r^*)$, we argue by contradiction. Suppose there exists $h\in \fS(L,\mu,r,r^*)$ such that the corresponding $f$ in \cref{prob_nuclear} is not $r$-factorizable, then there exist s a second-order stationary point $(\bar U,\bar V)\in \R^{m\times r}\times \R^{n\times r}$ such that $\bar X=\bar U\bar V^\top$ is not a global minimizer of $f$. If we can find a linear subspace $\cS$ such that $\cS$ contains $\bar X$ and $X^*$, and $\cS$ consists of matrices of rank no more than $r+r^*$, then we can apply \cref{thm5-9} to arrive at a contradiction by restricting $h$ and $f$ on $\cS$ and utilizing \cref{fact6-1}. This is achieved by the following lemma.
\begin{lemma}
 \label{common_svd}
     Let $A,B\in \R^{m\times n}$ with \revise{$\rank(A)+\rank(B)\leq k$}. Then, there exists $U\in \cO_m$ and $V\in \cO_n$ such that 
     \[   A=U \wDiag(\sigma(A))V^\top, \quad   B=U\begin{bmatrix}
         B_1 & 0 \\
         0 & 0
     \end{bmatrix} V^\top ,     \]
     where $B_1\in \R^{k\times k}$. 
 \end{lemma}
 \begin{proof}
     Suppose $\begin{bmatrix}
         A& B
     \end{bmatrix}=U_1D_1V_1^\top$ and $\begin{bmatrix}
         A \\
         B
     \end{bmatrix}=U_2D_2V_2^\top $ are the corresponding SVDs of $\begin{bmatrix}
         A & B
     \end{bmatrix}$ and $\begin{bmatrix}
         A \\
         B
     \end{bmatrix}$, respectively. Notice that both $\rank \begin{bmatrix}
         A &
         B
     \end{bmatrix}$ and $\rank\begin{bmatrix}
         A \\
         B
     \end{bmatrix}$ are no more than $k$. Then, the last $m-k$ rows of $U_1^\top A$ and $U_1^\top B$ are zero because $U_1^\top \begin{bmatrix}
         A& B
     \end{bmatrix}=D_1V_1^\top$, and the last $n-k$ columns of $AV_2$ and $BV_2$ are $0$ because $\begin{bmatrix}
         A \\
         B
     \end{bmatrix}V_2=U_2D_2$. This implies that 
     \begin{equation*}
         U_1^\top A V_2 = \begin{bmatrix}
             A_1 & 0 \\
              0 & 0 
         \end{bmatrix},\quad U_1^\top  BV_2=\begin{bmatrix}
             B_1 & 0 \\
             0 & 0
         \end{bmatrix}, \quad A_1,B_1\in \R^{k\times k}. 
     \end{equation*}
     Let an SVD of $A_1$ be given by $A_1=U_3 D_3 V_3^\top$. Now, it suffices to take 
     \[
     U=U_1\begin{bmatrix}
     U_3 &0\\0&I_{m-k}
     \end{bmatrix}\ \
     {\rm and}\ \
    V=V_2 \begin{bmatrix}
    V_3& 0\\0&I_{n-k}
    \end{bmatrix}. 
     \]
 \end{proof}
 We are now ready to prove in the next proposition the sufficiency of the conditions in \cref{thm5-9} for the function class $\fS(L,\mu,r,r^*)$. With \cref{common_svd} in hand, the remaining task is to ensure that we are also able to restrict $F_r$ in \cref{prob_relaxation} on a corresponding linear subspace to preserve the second-order stationarity of $(\bar U,\bar V)$, which is the key ingredient in the proof.
\begin{prop}
\label{prop_rip}
    Assume $h\in \fS(L,\mu,r,r^*)$ and let $X^*\in \R^{m\times n}$ with $\rank(X^*)=r^*$ being a global minimizer of $f$ in \cref{prob_nuclear}. Set $\kappa=\frac{L}{\mu}\geq 1$. Assume that $F_r$ in \cref{prob_relaxation} has a second-order stationary point $(\bar U,\bar V)$ with $\bar X:=\bar U\bar V^\top \neq X^*$, then both of the following conditions fail:
    \begin{itemize}
        \item[(1)] $r=m$;
        \item[(2)] $r\geq r^*$ and $\min\{r,m-r^*\}>\frac{(\kappa-1)^2}{4}\min\{r^*,m-r\}.$
    \end{itemize}
\end{prop}
\begin{proof}
    Let $k=r+r^*$.  If $k\geq m$, then the result follows immediately from \cref{thm5-9}. Therefore, we assume $k<m$ in the following. Then $r<m$, which proves that $r=m$ is false. Since $k\geq \rank(\bar X)+\rank(X^*)$, we can apply \cref{common_svd} to show that there exist $R\in \cO_m$ and $Q\in \cO_n$ such that 
    \begin{align*}
        R^\top X^*Q=\begin{bmatrix}
        X^*_1 & 0 \\
          0 & 0
    \end{bmatrix},\quad  R^\top\bar XQ=\begin{bmatrix}
        \bar X_1 & 0 \\
         0 & 0
    \end{bmatrix},
    \end{align*}
     where $X_1^*, \bar X_1\in \R^{k\times k}$. Then $\begin{bmatrix}
        X_1^* & 0 \\
         0 & 0
    \end{bmatrix}$ is a stationary point of $\tilde f$, where $\tilde f(W):=f(RW Q^\top)=\tilde h(W)+\lambda \|W\|_*$ with $\tilde h(W) := h(RWQ^\top)$. Clearly, we have $\tilde h\in \fS(L,\mu,r,r^*)$ and $\tilde F_r(U,V):=\tilde h(UV^\top)+\frac{\lambda}{2}\left( \|U\|_F^2+\|V\|_F^2\right) = F_r(RU, QV)$ for all $U\in \R^{m\times r}$ and $V\in \R^{n\times r}$. In particular, $(R^\top \bar U, Q^\top\bar V)$ is a second-order stationary point of $\tilde F_r$ with $R^\top \bar U \bar V^\top Q = R^\top \bar X Q\neq  \begin{bmatrix}
        X_1^* & 0 \\
         0 & 0
    \end{bmatrix}$. Consequently, replacing $f$ by $\tilde f$ if necessary, we may assume that 
    \begin{equation}
    \label{blockxbarstar}
        X^*=\begin{bmatrix}
        X^*_1 & 0 \\
         0 & 0
    \end{bmatrix},\quad \bar X=\begin{bmatrix}
        \bar X_1 & 0 \\
         0 &  0
    \end{bmatrix}.
    \end{equation}
     Let us also note that since $\dim \cR(\bar U)=\rank(\bar U)=\rank (\bar V)=\rank(\bar X)=\dim \cR(\bar X)$ by \cref{haha}(ii), and $\cR(\bar X)\subseteq \cR(\bar U)$, we know that $\cR(\bar U)=\cR
    (\bar X)$. This implies that $\bar U=\begin{bmatrix}
        \bar U_1 \\
          0
    \end{bmatrix}$ with $\bar U_1\in \R^{k\times r}$. Similar arguments on $\bar X^\top$ show that $\bar V=\begin{bmatrix}
        \bar V_1 \\
        0 
    \end{bmatrix}$ with $\bar V_1\in \R^{k\times r}$. 
    
    Define a function $\widehat f:\R^{k\times k}\to \R$ and $\widehat h:\R^{k\times k}\to \R$ as:
    \begin{align*}
      \widehat h(K)&:=h\left(\begin{bmatrix}
            K & 0 \\
             0 & 0
        \end{bmatrix}\right), \\
        \widehat f(K)&:=f\left(\begin{bmatrix}
            K & 0 \\
             0 & 0
        \end{bmatrix}\right)= \widehat h(K) + \lambda\left\|\begin{bmatrix}
            K & 0 \\
             0 & 0
        \end{bmatrix}  \right\|_* =\widehat h(K)+\lambda \|K\|_*
    \end{align*}
   The function $\widehat f$ can be viewed as the restriction of $f$ to the linear subspace $\cS:=\begin{bmatrix}
        \R^{k\times k} & 0  \\
         0 & 0
    \end{bmatrix}$. 
    Using \cref{fact6-1}, we know that $\widehat h$ is $\mu$-strongly convex, and $\nabla \widehat h$ is Lipschitz continuous with modulus $L$. Next, observe that $X_1^*$ is a global minimizer of $\widehat f$ and hence we have $0\in \partial\widehat f(X_1^*)$. Since $\widehat f(\cdot)=\widehat h(\cdot)+\lambda\|\cdot\|_*$, we see that $\widehat f$ is $\mu$-strongly convex, and hence $X_1^*$ is the unique stationary point of $\widehat f$. Moreover, using \cref{blockxbarstar}, we know that $\rank(X_1^*)=\rank(X^*)=r^*$. Consequently, $\widehat h\in C^2(\R^{k\times k})$ satisfies that $\widehat h\in \fS(L,\mu,r^*)$. Next, by direct calculation, the corresponding $\widehat F_r$ in \cref{prob_relaxation} can be written as:
    \begin{equation}
        \widehat F_r(U,V):=\widehat h(UV^\top)+\frac{\lambda}{2}\left( \|U\|_F^2+\|V\|_F^2\right)=h\left(\begin{bmatrix}
            UV^\top & 0 \\
             0 & 0
        \end{bmatrix}\right)+\frac{\lambda}{2}\left( \|U\|_F^2+\|V\|_F^2\right),
    \end{equation}
    which can be viewed as the restriction of $F_r$ on the linear subspace $\cS_1:= \begin{bmatrix}
        \R^{k\times r} \\
        0
    \end{bmatrix} \times \begin{bmatrix}
        \R^{k\times r} \\
        0
    \end{bmatrix}$. Since $(\bar U,\bar V)\in \cS_1$ is a second-order stationary point of $F_r$, we immediately see that $(\bar U_1,\bar V_1)$ is a second-order stationary point of $\widehat F_r$, since restricting a function on a linear subspace would not change second-order stationarity. The assumption $\bar X\neq X^*$ gives that $\bar X_1\neq X^*_1$ given the representation in \cref{blockxbarstar}, and hence $\bar X_1$ is not a global minimizer of $\widehat f$, since $\widehat f$ is strongly convex and its unique global minimizer is $X^*_1$. Consequently, $\widehat f$ is not $r$-factorizable. \revise{If $r^*=0$, then $k=r$, and hence the first sufficient condition in \cref{thm5-9}, applied to the restricted $k\times k$ problem, holds. This would imply that $\widehat f$ is $r$-factorizable, contradicting the preceding conclusion. Therefore $r^*>0$.} \revise{By \cref{thm5-9}, applied to the restricted problem, the condition $r\geq r^*$ and $r> \frac{(\kappa-1)^2}{4}r^*$ must fail.} \revise{Since $r+r^*=k<m$, we know that $\min\{r,m-r^*\}=r$ and $\min\{r^*,m-r\}=r^*$. Hence this failed restricted condition is equivalent to condition (2) in the present proposition. Since $r<m$ was already shown above, condition (1) also fails, and the proof has been completed.}
\end{proof}

\appendix

\section{Proof of \cref{thm5-9}}
\label{appa}
In this section, we finish the key step of the proof of \cref{thm5-9} by solving \cref{eq:opt_sc}. We observe that the objective can be rewritten as a sum with the $i$th summand depending only on $(x_i,g_i,y_{\tau(i)},v_{\tau(i)})$.\footnote{Specifically, notice that the fourth sum in the objective can be rewritten as $\sum_{i=1}^m(Ly_{\tau(i)}+v_{\tau(i)})(\mu y_{\tau(i)}+v_{\tau(i)})$.} In addition, from the structure of the constraints in \eqref{eq:opt_sc}, we see that the terms $\{(x_i,g_i,y_{\tau(i)},v_{\tau(i)})\}_{i\in [m]}$ can be divided into 4 groups depending on whether $i\in [r]$ and $\tau(i)\in [r^*]$. These motivate the definitions of the next four associated index sets, for any fixed $\tau\in \gp_m$:
\begin{equation}
    \label{tau_indexsets}
    \cJ_{1}^\tau:=[r]\cap \tau^{-1}[r^*],~\cJ_{2}^\tau:=[r]\setminus \cJ_{1}^\tau,~\cJ_{3}^\tau:=([m]\setminus[r])\cap \tau^{-1}[r^*],~\cJ_{4}^\tau:=([m]\setminus[r])\setminus \cJ_{3}^\tau.
\end{equation}
Let $d_\tau:=|\cJ_2^\tau|$. Then, by the definition of $\{\cJ_i^\tau\}_{i\in [4]}$ in \cref{tau_indexsets}, we have 
\begin{equation}
    \label{bound_indexset}
  |\revise{\cJ_1^\tau}|=r-|\cJ_2^\tau|=r-d_\tau,~|\cJ_3^\tau|=r^*-|\cJ_1^\tau|=r^*-r+d_\tau,~|\cJ_4^\tau|=m-r-|\cJ_3^\tau|=m-r^*-d_\tau.
\end{equation}
To solve \eqref{eq:opt_sc}, our strategy is to introduce an auxiliary variable $w\in \R$ to transform \cref{eq:opt_sc} to the next equivalent form:
\begin{equation}
    \label{eq:opt_sc_introducew}
    \begin{aligned}
            &&\sup_{\begin{subarray}{c}
            x,g,y,v\in \R^{m}\\
            \tau\in \gp_m,w\in \R
            \end{subarray}} &\sum_{i=1}^m (L x_i+g_i)(\mu y_{\tau(i)}+v_{\tau(i)})+\sum_{i=1}^m (\mu x_i+g_i)(Ly_{\tau(i)}+v_{\tau(i)}) \\
            &&&-\sum_{i=1}^m(Lx_i+g_i)(\mu x_i+g_i)-\sum_{i=1}^m(Ly_i+v_i)(\mu y_i+v_i) \\
            &&s.t.\quad ~&\forall i\in [r],~x_i\geq w>0,~g_i=\lambda,\quad \forall i\in [r^*],~y_i>0,~v_i=\lambda,\\
            &&&\forall i\in [m]\setminus [r],~x_i=0,~g_i\in [0,\lambda+Lw],\\\
            &&&\forall i\in [m]\setminus [r^*], ~y_i=0,~v_i\in [0,\lambda]. 
    \end{aligned}
\end{equation}
Problem \eqref{eq:opt_sc} is equivalent to \eqref{eq:opt_sc_introducew} in the following sense: for any feasible solution $(x,g,y,v,\tau)$ of \eqref{eq:opt_sc}, $(x,g,y,v,\tau,\min_{i\in [r]}x_i)$ is a feasible solution of \revise{\eqref{eq:opt_sc_introducew}} having the same objective function value; for any feasible solution $(x,g,y,v,\tau,w)$ of \eqref{eq:opt_sc_introducew}, $(x,g,y,v,\tau)$ is a feasible solution of \eqref{eq:opt_sc} having the same objective function value. Consequently, we have the following result.
\begin{prop}
\label{propb2}
    There exists a feasible solution $(x,g,y,v,\tau)$ with $\|g\|_\infty>\lambda$ to \eqref{eq:opt_sc} having nonnegative objective function value if and only if there exists a feasible solution $(x,g,y,v,\tau,w)$ with $\|g\|_\infty>\lambda$ to \eqref{eq:opt_sc_introducew} having nonnegative objective function value.
\end{prop}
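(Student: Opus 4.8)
The plan is to prove \cref{propb2} by exhibiting an explicit, objective‑preserving correspondence between the feasible set of \eqref{eq:opt_sc} and that of \eqref{eq:opt_sc_introducew} which also leaves $g$ (and hence the side condition $\|g\|_\infty>\lambda$) untouched; the equivalence of ``there exists a feasible point with $\|g\|_\infty>\lambda$ and nonnegative objective value'' then follows verbatim in both directions. The crucial structural remark is that the auxiliary variable $w$ does not appear in the objective of \eqref{eq:opt_sc_introducew} at all: the two objectives are literally the same function of $(x,g,y,v,\tau)$ (using that summing over $\tau$ a permutation reindexes the last sum), so nothing needs to be checked about objective values beyond noting that the $(x,g,y,v,\tau)$‑part is carried along unchanged. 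Thus the entire content of the proof is transferring feasibility correctly, and keeping track of the strict inequality $x_i\ge w>0$.

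For the forward implication, given a feasible $(x,g,y,v,\tau)$ of \eqref{eq:opt_sc}, I would set $w:=\min_{i\in[r]}x_i$, which is well defined and strictly positive because $r\ge 1$ and $x_i>0$ for all $i\in[r]$. Then $x_i\ge w>0$ for $i\in[r]$, so the first block of constraints of \eqref{eq:opt_sc_introducew} holds; for $i\in[m]\setminus[r]$ the requirement $g_i\in[0,\lambda+L\min_{j\in[r]}x_j]$ from \eqref{eq:opt_sc} is exactly $g_i\in[0,\lambda+Lw]$; and the constraints on $y$ and $v$ are identical in the two problems. Hence $(x,g,y,v,\tau,w)$ is feasible for \eqref{eq:opt_sc_introducew}, has the same objective value, and the same $g$.

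For the reverse implication, given a feasible $(x,g,y,v,\tau,w)$ of \eqref{eq:opt_sc_introducew}, I would simply discard $w$. From $x_i\ge w>0$ we get $x_i>0$ for all $i\in[r]$ and $w\le\min_{j\in[r]}x_j$; since $L>0$ this yields the inclusion $[0,\lambda+Lw]\subseteq[0,\lambda+L\min_{j\in[r]}x_j]$, so $g_i$ for $i\in[m]\setminus[r]$ lies in the interval demanded by \eqref{eq:opt_sc}, while every other constraint is already in the form required by \eqref{eq:opt_sc}. Therefore $(x,g,y,v,\tau)$ is feasible for \eqref{eq:opt_sc} with the same objective value and the same $g$, completing the equivalence.

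I do not expect a genuine obstacle here: the argument is routine bookkeeping of constraints. The only points that merit a second look are that $w>0$ is actually forced (so the strict inequality in $x_i\ge w>0$ is consistent), and that the interval inclusion used in the reverse direction points the right way — both of which rely precisely on $L>0$ and on $x$ being strictly positive on $[r]$, which are guaranteed by the standing hypotheses and by \eqref{eq:opt_sc}, respectively.
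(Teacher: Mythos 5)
Your proposal is correct and follows the same route the paper sketches: set $w=\min_{i\in[r]}x_i$ in the forward direction and simply drop $w$ in the reverse, observing in both cases that $w$ does not appear in the objective and that the interval constraint on $g_i$ transfers because $w\le\min_{j\in[r]}x_j$. Nothing further to add.
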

Next, we plan to fix $\tau$ and $w$ to analyze the optimal value and the optimal solution of \eqref{eq:opt_sc_introducew}. The reason to do so is that the optimization problem can be made separable when $\tau$ and $w$ are fixed. To simplify the calculation, we only consider the case where $\lambda=1$ in the next lemma.
\begin{lemma}
\label{lema1}
 Let $r^*\in [m]\cup\{0\}$, $r\in [m]$, and $\infty>L\geq \mu>0$. Let $\tau\in \gp_m$ and $w>0$. Let $\{\cJ_i^\tau\}_{i\in[4]}$ be defined in \cref{tau_indexsets}. Consider the following optimization problem:
      \begin{equation}
            \label{eq:opt_scfixedw}
            \begin{aligned}
                &&\sup_{x,g,y,v\in \R^{m}} & \sum_{i=1}^m (L x_i+g_i)(\mu y_{\tau(i)}+v_{\tau(i)})+\sum_{i=1}^m (\mu x_i+g_i)(Ly_{\tau(i)}+v_{\tau(i)}) \\
                &&&-\sum_{i=1}^m(Lx_i+g_i)(\mu x_i+g_i)-\sum_{i=1}^m(Ly_i+v_i)(\mu y_i+v_i)  \\
                &&s.t.\quad~  &\forall i\in [r],~x_i\geq w,~g_i=1,\quad \forall i\in [r^*],~y_i>0,~v_i=1,\\
                &&&\forall i\in [m]\setminus [r],~x_i=0,~g_i\in [0,1+Lw],\quad \forall i\in [m]\setminus [r^*], ~y_i=0,~v_i\in [0,1]. 
            \end{aligned}
        \end{equation}
        Then, the optimization problem \eqref{eq:opt_scfixedw} has optimal solutions, and the optimal value is         
        \begin{equation}\label{opt_val_val}
        \left(-L\mu|\cJ_2^\tau|+|\cJ_3^\tau|\frac{L(L-\mu)^2}{4\mu}\right)w^2.
        \end{equation}
        Moreover, the following statements are equivalent:
        \begin{itemize}
            \item $|{\cal J}_3^\tau|>0$.
            \item For all the optimal solutions $(\bar x,\bar g,\bar y,\bar v)$ of \eqref{eq:opt_scfixedw}, we have $\|\bar g\|_\infty>1$.
            \item There exists one optimal solution $(\bar x,\bar g,\bar y,\bar v)$ of \eqref{eq:opt_scfixedw} such that $\|\bar g\|_\infty>1$.
        \end{itemize}
\end{lemma}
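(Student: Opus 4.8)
The plan is to exploit the separable structure of \eqref{eq:opt_scfixedw}. Since $\tau\in\gp_m$ is a bijection, the last sum in the objective equals $\sum_{i=1}^m(Ly_{\tau(i)}+v_{\tau(i)})(\mu y_{\tau(i)}+v_{\tau(i)})$, so the objective can be written as $\sum_{i=1}^m T_i$ with
\[
\begin{aligned}
T_i &:= (Lx_i+g_i)(\mu y_{\tau(i)}+v_{\tau(i)})+(\mu x_i+g_i)(Ly_{\tau(i)}+v_{\tau(i)})\\
&\qquad -(Lx_i+g_i)(\mu x_i+g_i)-(Ly_{\tau(i)}+v_{\tau(i)})(\mu y_{\tau(i)}+v_{\tau(i)}).
\end{aligned}
\]
Each $T_i$ depends only on $(x_i,g_i,y_{\tau(i)},v_{\tau(i)})$; as $\tau$ is a bijection every pair $(y_j,v_j)$ occurs in exactly one $T_i$, and the feasible set is a Cartesian product over $i\in[m]$ of a constraint on $(x_i,g_i)$ (determined by whether $i\in[r]$) and a constraint on $(y_{\tau(i)},v_{\tau(i)})$ (determined by whether $\tau(i)\in[r^*]$). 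Hence \eqref{eq:opt_scfixedw} decouples into $m$ independent subproblems, and it suffices to maximize the generic $T_i$ in each of the four classes $i\in\cJ_1^\tau,\dots,\cJ_4^\tau$ of \cref{tau_indexsets}, which correspond precisely to the four combinations of ``$i\in[r]$ or not'' and ``$\tau(i)\in[r^*]$ or not''.

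For the three easy classes I would simply expand $T_i$. For $i\in\cJ_1^\tau$ (where $x_i\ge w$, $g_i=1$, $y_{\tau(i)}>0$, $v_{\tau(i)}=1$) one gets $T_i=-L\mu(x_i-y_{\tau(i)})^2$, so the subproblem value is $0$, attained with $g_i=1$. For $i\in\cJ_4^\tau$ (where $x_i=0$, $g_i\in[0,1+Lw]$, $y_{\tau(i)}=0$, $v_{\tau(i)}\in[0,1]$) one gets $T_i=-(g_i-v_{\tau(i)})^2$, so the value is $0$, attained on $\{g_i=v_{\tau(i)}\in[0,1]\}$, in particular with $g_i\le 1$. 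For $i\in\cJ_2^\tau$ (where $x_i\ge w$, $g_i=1$, $y_{\tau(i)}=0$, $v_{\tau(i)}\in[0,1]$), $T_i$ is strictly concave in $v_{\tau(i)}$ with unconstrained maximizer $\tfrac12[(L+\mu)x_i+2]>1$ (using $x_i\ge w>0$ and $L\ge\mu>0$), so $v_{\tau(i)}=1$ at the optimum; substituting gives $T_i=-L\mu x_i^2$, maximized at $x_i=w$ with value $-L\mu w^2$ and $g_i=1$.

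The substantive case is $i\in\cJ_3^\tau$ (where $x_i=0$, $g_i\in[0,1+Lw]$, $y_{\tau(i)}>0$, $v_{\tau(i)}=1$), for which $T_i=-g_i^2+g_i[(L+\mu)s+2]-(Ls+1)(\mu s+1)$ with $s:=y_{\tau(i)}>0$. I would optimize in two stages. Over $g_i\in[0,1+Lw]$ the concave maximizer is $g_i^\star(s)=\tfrac12[(L+\mu)s+2]>1$, which is feasible exactly when $s\le s_0:=\tfrac{2Lw}{L+\mu}$; for $s\le s_0$ the reduced objective is $\tfrac{(L-\mu)^2}{4}s^2$, and for $s\ge s_0$ (so that $g_i=1+Lw$) it is the concave quadratic $-L\mu s^2+L(L+\mu)w\,s-L^2w^2$ with vertex $s^{\star\star}=\tfrac{(L+\mu)w}{2\mu}$. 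The inequality $(L-\mu)^2\ge 0$ yields $s^{\star\star}\ge s_0$, and a short comparison (again using $(L+\mu)^2\ge 4L\mu$) shows the maximum over $s>0$ is attained at $s^{\star\star}$ with value $\tfrac{L(L-\mu)^2}{4\mu}w^2$; when $L>\mu$ this optimizer is unique and has $g_i=1+Lw>1$, while when $L=\mu$ the value is $0$, attained exactly on $\{0<s\le w,\ g_i=Ls+1\}$, so $g_i>1$ at every optimizer in both cases. Summing the contributions of the four classes yields the claimed optimal value $\big(-L\mu|\cJ_2^\tau|+|\cJ_3^\tau|\tfrac{L(L-\mu)^2}{4\mu}\big)w^2$, and an optimal solution of \eqref{eq:opt_scfixedw} exists because each subproblem attains its value.

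Finally, the equivalence follows from the decoupling once more: at any optimal solution of \eqref{eq:opt_scfixedw}, the restriction to each subproblem is optimal, so $g_i=1$ for $i\in\cJ_1^\tau\cup\cJ_2^\tau=[r]$, $g_i\le 1$ for $i\in\cJ_4^\tau$, and $g_i>1$ for $i\in\cJ_3^\tau$ by the analysis above. Thus $|\cJ_3^\tau|>0$ forces $\|\bar g\|_\infty>1$ at \emph{every} optimal solution (hence at some optimal solution), while $|\cJ_3^\tau|=0$ forces $\|\bar g\|_\infty\le 1$ at every optimal solution (hence at no optimal solution); combining the two cases gives the asserted equivalence of the three statements. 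I expect the $\cJ_3^\tau$ subproblem --- in particular the two-stage optimization (first in $g_i$, then in $s$) and the separate treatment of $L>\mu$ versus $L=\mu$ required for the ``for all optimal solutions'' statement --- to be the only genuinely delicate part; the rest is bookkeeping organized around the separability.
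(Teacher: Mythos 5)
Your proof is correct and takes essentially the same route as the paper: exploit the permutation to make the objective separable, split into the four index classes $\cJ_1^\tau,\dots,\cJ_4^\tau$, and analyze each subproblem. The only real point of divergence is the $\cJ_3^\tau$ subproblem, where you carry out a two-stage optimization (first over $g_i$ with a feasibility threshold $s_0=\tfrac{2Lw}{L+\mu}$, then over $s$), whereas the paper completes the square in $y_{\tau(i)}$ and uses the factorization $\frac{(L-\mu)^2}{4L\mu}(g_i-1)^2=\frac{L(L-\mu)^2w^2}{4\mu}+\frac{(L-\mu)^2}{4L\mu}(g_i-(1+Lw))(Lw+g_i-1)$ to read off the same bound; both correctly separate the $L>\mu$ and $L=\mu$ situations and establish $g_i>1$ at every optimizer, which is what the three-way equivalence rests on.
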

\begin{proof}
Using the definition of permutation, we notice that       
\begin{equation}
            \label{eq:permu_eq}
            \begin{aligned}
                  \sum_{i=1}^m (L y_i+v_i)(\mu y_i+v_i)=\sum_{i=1}^m(Ly_{\tau(i)}+v_{\tau(i)})(\mu y_{\tau(i)}+v_{\tau(i)}).
            \end{aligned}
\end{equation}
Substituting \cref{eq:permu_eq} into \cref{eq:opt_scfixedw}, we get that 
\begin{equation}
    \label{eq:opt_scfixedwpermu}
     \begin{aligned}
                &&\sup_{x,g,y,v\in \R^{m}} & \sum_{i=1}^m (L x_i+g_i)(\mu y_{\tau(i)}+v_{\tau(i)})+\sum_{i=1}^m (\mu x_i+g_i)(Ly_{\tau(i)}+v_{\tau(i)}) \\
                &&&-\sum_{i=1}^m(Lx_i+g_i)(\mu x_i+g_i)-\sum_{i=1}^m(Ly_{\tau(i)}+v_{\tau(i)})(\mu y_{\tau(i)}+v_{\tau(i)})  \\
                &&s.t.~~~~  &\forall i\in [r],~x_i\geq w,~g_i=1,\quad \forall i\in [r^*],~y_i>0,~v_i=1,\\
                &&&\forall i\in [m]\setminus [r],~x_i=0,~g_i\in [0,1+Lw],\quad \forall i\in [m]\setminus [r^*], ~y_i=0,~v_i\in [0,1]. 
    \end{aligned}
\end{equation}
Observe that \eqref{eq:opt_scfixedwpermu} can be decomposed into the next $m$ subproblems for each $i\in [m]$:
\begin{equation}
    \label{scsubsub}
    \begin{aligned}
           \sup_{x_i,g_i,y_i,v_i\in \R}~& (L x_i+g_i)(\mu y_{\tau(i)}+v_{\tau(i)})+  (\mu x_i+g_i)(Ly_{\tau(i)}+v_{\tau(i)}) \\
                &- (Lx_i+g_i)(\mu x_i+g_i)- (Ly_{\tau(i)}+v_{\tau(i)})(\mu y_{\tau(i)}+v_{\tau(i)}) \\
          s.t. ~~~~~~&~\begin{cases}
            x_i\geq w,~g_i=1,~y_{\tau(i)}>0,~v_{\tau(i)}=1,  & ~~  \text{if }i\in \cJ^\tau_1,   \\ 
            x_i\geq w,~g_i=1,~y_{\tau(i)}=0,~v_{\tau(i)}\in [0,1],  & ~~ \text{if }i\in \cJ^\tau_2,   \\
            x_i=0,~g_i\in [0,1+Lw],~y_{\tau(i)}>0,~v_{\tau(i)}=1,  & ~~ \text{if }i\in \cJ^\tau_3,  \\
            x_i=0,~g_i\in [0,1+Lw],~y_{\tau(i)}=0,~v_{\tau(i)}\in [0,1],  & ~~ \text{if }i\in \cJ^\tau_4,
          \end{cases}      
    \end{aligned}
\end{equation}
where we recall the definition of $\{\cJ^\tau_i\}_{i\in [4]}$ in \cref{tau_indexsets}. We now consider the solution and optimal value of each subproblem \cref{scsubsub} for fixed $i$:
        \begin{enumerate}
        \item $i\in \cJ^\tau_1$. Then \cref{scsubsub} takes the following form:
        \begin{equation}
            \label{eq:opt_scsub1}
            \begin{aligned}
                &&\sup_{x_i,y_{\tau(i)}} &(Lx_i+1)(\mu y_{\tau(i)}+1)+(\mu x_i+1)(L y_{\tau(i)}+1)\\
                &&&-(Lx_i+1)(\mu x_i+1)-(Ly_{\tau(i)}+1)(\mu y_{\tau(i)}+1) \\
                &&s.t.~&~x_i\geq w,~y_{\tau(i)}>0,
            \end{aligned}
        \end{equation}
        where we used the fact that $g_i$ and $v_{\tau(i)}$ are $1$. Denote the objective function of \cref{eq:opt_scsub1} by $S_1$. By direct calculation, we can rewrite $S_1(x_i,y_{\tau(i)})$ as:
        \[   S_1(x_i,y_{\tau(i)})=-L\mu(x_i-y_{\tau(i)})^2.                 \]
        Clearly, the optimal value of \eqref{eq:opt_scsub1} is $0$, and it is achieved if and only if
        \begin{equation}
        \label{solsubcase1}
            x_i=y_{\tau(i)}\geq w. 
        \end{equation}
        \item $i\in \cJ^\tau_2$. Then \cref{scsubsub} takes the following form:
        \begin{equation}
            \label{eq:opt_scsub2}
            \begin{aligned}
                &&\sup_{ x_i , v_{\tau(i)}  } & (Lx_i+1)v_{\tau(i)} + (\mu x_i+1)v_{\tau(i)}  - (Lx_i+1)(\mu x_i+1)- v_{\tau(i)}^2 \\
                && s.t.~~&x_i\geq w,~v_{\tau(i)}\in [0,1],
            \end{aligned}
        \end{equation}
        where we used the fact that $g_i$ and $y_{\tau(i)}$ are $1$ and $0$, respectively. Denote the objective function of \cref{eq:opt_scsub2} by $S_2$. By direct calculation, we can rewrite $S_2(x_i,v_{\tau(i)})$ as:
        \[   S_2(x_i,v_{\tau(i)})=-L\mu x_i^2+ (L+\mu)x_i(v_{\tau(i)}-1)- (v_{\tau(i)}-1)^2.                 \]
        First, we notice that $S_2$ is strictly decreasing on $[0,\infty)$ as a function of $x_i$ when $v_{\tau(i)}$ is fixed to be any value in $[0,1]$. This means that 
        \begin{equation}
            \label{s2minx}
            \sup_{x_i\geq w} S_2(x_i,v_{\tau(i)})=-L\mu w^2+ (L+\mu)w(v_{\tau(i)}-1)-(v_{\tau(i)}-1)^2, 
        \end{equation}
        where the optimal value is achieved if and only if $x_i=w$. Let $\tilde S$ denote the function on the right hand side of \cref{s2minx}. Then we see $\tilde S$ is strictly increasing as a function of $v_{\tau(i)}$ on $(-\infty,1]$ by using the elementary properties of quadratic functions. 
        Therefore, the optimal value of \eqref{eq:opt_scsub2} is $-L\mu w^2$, and it is achieved if and only if 
        \begin{equation}
            \label{solsubcase2}
               x_i=w,~v_{\tau(i)}=1. 
        \end{equation}
        \item $i\in \cJ^\tau_3$. Then \cref{scsubsub} has the following form:
        \begin{equation}
            \label{eq:opt_scsub3pre}
            \begin{aligned}
                &\sup_{g_i,y_{\tau(i)}} &&g_i(\mu y_{\tau(i)}+1)+g_i(Ly_{\tau(i)}+1)  -g_i^2-(Ly_{\tau(i)}+1)(\mu y_{\tau(i)}+1) \\
                &~~s.t.~&&~g_i\in [0,1+Lw],~y_{\tau(i)}>0,
            \end{aligned}
        \end{equation}
        where we used the fact that $x_i$ and $v_{\tau(i)}$ are $0$ and $1$, respectively. Denote the objective function of \cref{eq:opt_scsub3pre} by $S_3$. By direct calculation we can rewrite $S_3$ as follows 
        \begin{align}\label{S3defhahaha}
            S_3(g_i,y_{\tau(i)})&=\frac{(L-\mu)^2}{4L\mu}(g_i-1)^2-L\mu\left(y_{\tau(i)}-\frac{(L+\mu)(g_i-1)}{2L\mu}\right)^2\nonumber\\
            &= \frac{L(L-\mu)^2w^2}{4\mu}+\frac{(L-\mu)^2}{4L\mu}(g_i-(1+Lw))(Lw+g_i-1)\nonumber\\
            &~~~-L\mu\left(y_{\tau(i)}-\frac{(L+\mu)(g_i-1)}{2L\mu}\right)^2 .
        \end{align}
        Notice that $g_i-(1+Lw)\leq 0$ and $Lw+g_i-1>0$ when $g_i\in (1,1+Lw]$. We can thus see from the second expression in the above display that the optimal value of $S_3$ when $g_i\in (1,1+Lw]$ is $\frac{L(L-\mu)^2w^2}{4\mu}$; moreover, when $L>\mu$, the optimal value is achieved if and only if 
        \begin{equation}
            \label{solsubcase31}
            g_i=1+Lw,~y_{\tau(i)}=\frac{(L+\mu)w}{2\mu},
        \end{equation}
        while when $L=\mu$, the optimal value is achieved if and only if
        \begin{equation}
            \label{solsubcase32}
              g_i\in (1,1+Lw],~y_{\tau(i)}=\frac{(L+\mu)(g_i-1)}{2L\mu}. 
        \end{equation}
        On the other hand, when $g_i\leq 1$, notice that $y_{\tau(i)}>0$, and hence $y_{\tau(i)}-\frac{(L+\mu)(g_i-1)}{2L\mu}>|\frac{(L+\mu)(g_i-1)}{2L\mu}|$. Then we have from the first expression of $S_3$ in \eqref{S3defhahaha} that
         \begin{align*}
            S_3(g_i,y_{\tau(i)})<\frac{(L-\mu)^2}{4L\mu}(g_i-1)^2-L\mu\left(\frac{(L+\mu)(g_i-1)}{2L\mu}\right)^2=-(g_i-1)^2\leq 0.
        \end{align*}
        Consequently, the optimal value of \cref{eq:opt_scsub3pre} is $\frac{L(L-\mu)^2w^2}{4\mu}$, and is achieved as described in \eqref{solsubcase31} and \eqref{solsubcase32}.
        
        \item $i\in \cJ^\tau_4$. Then \cref{scsubsub} has the following form:
        \begin{equation}
            \label{eq:opt_scsub4}
            \begin{aligned}
                &\sup_{g_i,v_{\tau(i)}} &&2g_iv_{\tau(i)}-g_i^2-v_{\tau(i)}^2 \\
                &~~s.t.&&g_i\in [0,1+Lw],~v_{\tau(i)}\in [0,1],
            \end{aligned}
        \end{equation}
        where we used the fact that $x_i$ and $y_{\tau(i)}$ are $0$. Notice that the objective of the above problem is $-(g_i-v_{\tau(i)})^2$.
        Clearly, the optimal value of \eqref{eq:opt_scsub4} is $0$, and is achieved if and only if
        \begin{equation}
            \label{solsubcase4}
             g_i=v_{\tau(i)}\in [0,1]. 
        \end{equation}
    \end{enumerate}
    Consequently, by the solution sets given in \eqref{solsubcase1}, \eqref{solsubcase2}, \eqref{solsubcase31}, \eqref{solsubcase32} and \eqref{solsubcase4}, we know the solution set of \eqref{eq:opt_scfixedw} is nonempty. The optimal value is obtained by summing all the optimal values given in the four cases. Moreover, every solution $(\bar x,\bar g,\bar y,\bar v)$ of \eqref{eq:opt_scfixedw} satisfies $\|\bar g\|_\infty>1$ if and only if $|\cJ_3^\tau|>0$, according to the structure of $\bar g$ given in \eqref{solsubcase31}, \eqref{solsubcase32} and \eqref{solsubcase4}.
\end{proof}

\begin{prop}
    \label{lemma5-7}
    Let $r^*\in [m]\cup\{0\},r\in [m]$, and $\infty>L\geq \mu>0$. Let $G$ be the objective function of \cref{eq:opt_sc_introducew} and let $\kappa:=\frac{L}{\mu}\geq 1$. If $r^*$, $r$ and $\kappa$ satisfy any of the following conditions, then there is no feasible $(\bar x,\bar g,\bar y,\bar v,\bar\tau,\bar w)$ to \eqref{eq:opt_sc_introducew} satisfying $\|\bar g\|_\infty>\lambda$ and $G(\bar x,\bar g,\bar y,\bar v,\bar\tau,\bar w)\geq 0$.  
    \begin{itemize}
            \item[(1)] $r=m$.
            \item[(2)] $r\geq r^*$ and $ \min\{r,m-r^*\}>\frac{(\kappa-1)^2}{4}\min\{r^*,m-r\} $.
    \end{itemize}
    Otherwise, such a feasible solution exists.
\end{prop}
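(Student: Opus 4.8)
The plan is to combine the homogeneity of \cref{eq:opt_sc_introducew} with \cref{lema1}. I would first observe that under the substitution $(x,g,y,v,w)\mapsto\lambda^{-1}(x,g,y,v,w)$ the constraint set of \cref{eq:opt_sc_introducew} is mapped to the corresponding set with $\lambda$ replaced by $1$, the objective $G$ scales by $\lambda^{-2}$, and the side condition $\|g\|_\infty>\lambda$ becomes $\|g\|_\infty>1$; hence it suffices to prove the proposition for $\lambda=1$, which is the normalization of \cref{lema1}, and I do so from now on. Then, for a fixed $\tau\in\gp_m$ and $w>0$, \cref{eq:opt_sc_introducew} restricted to these $\tau,w$ is exactly \cref{eq:opt_scfixedw}, so by \cref{lema1} its optimal value is $c_\tau w^2$ with $c_\tau:=-L\mu\,|\cJ_2^\tau|+|\cJ_3^\tau|\,\tfrac{L(L-\mu)^2}{4\mu}$, and an optimizer with $\|g\|_\infty>1$ can be chosen if and only if $|\cJ_3^\tau|>0$.

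The first real step I would carry out is the reduction: \emph{there is a feasible point of \cref{eq:opt_sc_introducew} with $\|g\|_\infty>1$ and $G\ge0$ if and only if there is a $\tau$ with $|\cJ_3^\tau|>0$ and $c_\tau\ge0$}. For ``$\Leftarrow$'' I would fix such a $\tau$, set $w=1$, and take the optimizer from \cref{lema1} with $\|g\|_\infty>1$; its objective value equals $c_\tau\ge0$. For ``$\Rightarrow$'', given such a point $(x,g,y,v,\tau,w)$, bounding $G$ by the inner optimum $c_\tau w^2$ and using $w>0$ yields $c_\tau\ge0$; if $|\cJ_3^\tau|=0$ then $c_\tau\ge0$ forces $|\cJ_2^\tau|=0$ as well, so after the permutation-symmetrization \cref{eq:permu_eq} the per-index decomposition of $G$ consists only of $\cJ_1^\tau$- and $\cJ_4^\tau$-summands, each $\le0$, while any index with $g_i>1$ must lie in $\cJ_4^\tau$ (indices in $[r]$ have $g_i=1$) and there the summand equals $-(g_i-v_{\tau(i)})^2<0$ because $v_{\tau(i)}\le1<g_i$, forcing $G<0$, a contradiction; hence $|\cJ_3^\tau|>0$.

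Next I would decide, in terms of $r,r^*,\kappa$, when such a $\tau$ exists. By \cref{bound_indexset}, as $\tau$ varies $d:=|\cJ_2^\tau|$ ranges over all integers in $[\max\{0,r-r^*\},\min\{r,m-r^*\}]$ with $|\cJ_3^\tau|=r^*-r+d$, so (dividing $c_\tau$ by $L\mu>0$) the desired $\tau$ exists iff there is an integer $d$ with $\max\{0,r-r^*\}\le d\le\min\{r,m-r^*\}$, $d>r-r^*$, and $\Phi(d):=-d+(r^*-r+d)\tfrac{(\kappa-1)^2}{4}\ge0$. Since $\Phi$ is affine in $d$ with slope $\tfrac{(\kappa-1)^2}{4}-1$, the analysis splits into whether $\kappa<3$, $\kappa=3$, or $\kappa>3$. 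The admissible set of $d$ is empty exactly when $r^*=0$ or $r=m$, which is case~(1) (nonexistence). When $r<r^*$, the value $d=0$ is admissible with $\Phi(0)=(r^*-r)\tfrac{(\kappa-1)^2}{4}\ge0$, so a point exists (and $r<r^*$ is not among (1)--(4)). In the remaining regime $r\ge r^*$, $r^*\ge1$, $r<m$: for $\kappa<3$, $\Phi$ is strictly decreasing and $\Phi(r-r^*+1)=\tfrac{(\kappa-1)^2}{4}-1-(r-r^*)<0$, so no $d$ works, matching case~(3); for $\kappa=3$, $\Phi\equiv r^*-r$, which is $\ge0$ iff $r=r^*$, so nonexistence iff $r>r^*$, matching case~(2); for $\kappa>3$, $\Phi$ is strictly increasing with maximum over admissible $d$ at $d=\min\{r,m-r^*\}$, and $\Phi(\min\{r,m-r^*\})<0$ rearranges, using $1-\tfrac{4}{(\kappa-1)^2}>0$, to $\tfrac{r-r^*}{1-4/(\kappa-1)^2}>\min\{r,m-r^*\}$, matching case~(4). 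Collecting the cases yields that no feasible point with $\|g\|_\infty>1$ and $G\ge0$ exists precisely under (1)--(4), and otherwise one exists.

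I expect the main obstacle to be the ``$\Rightarrow$'' direction of the reduction above: ruling out $|\cJ_3^\tau|=0$ needs all three equivalent bullets of \cref{lema1} together with the explicit per-index decomposition of $G$ and the observation that a large entry of $g$ can only sit in the $\cJ_4^\tau$-block. The rest is careful but routine bookkeeping, the delicate points being the edge cases---when the admissible $d$-range is empty, the boundary $r=r^*$ under $\kappa=3$, and checking that the case~(4) inequality is well posed and already forces $r>r^*$.
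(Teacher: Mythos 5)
Your proof is correct, and it follows the paper's overall plan (normalize to $\lambda=1$, apply \cref{lema1} for the inner optimum as a function of $(\tau,w)$, then analyze the affine function of $d=|\cJ_2^\tau|$), but your reduction step is genuinely cleaner than the paper's. The paper sets up two scenarios (S1) and (S2) — (S1) being a conjunction ``optimal value of \cref{prob_reduced_sc} is nonpositive AND no feasible $(d,w)$ with $H(d,w)\ge 0$ and $r^*-r+d>0$'', (S2) being the existence of such a $(d,w)$ — explicitly flags that these are not logically complementary, and then verifies case by case that one of them holds. You instead prove a single clean equivalence: a feasible point of \cref{eq:opt_sc_introducew} with $\|g\|_\infty>\lambda$ and $G\ge 0$ exists iff some $\tau$ has $|\cJ_3^\tau|>0$ and $c_\tau\ge 0$. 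The forward direction is the step the paper's (S1) formulation dances around: you bound $G\le c_\tau w^2$ to get $c_\tau\ge 0$, note $|\cJ_3^\tau|=0$ would force $|\cJ_2^\tau|=0$, and then use the per-index decomposition of the objective (cases $i\in\cJ_1^\tau$ gives $-L\mu(x_i-y_{\tau(i)})^2$, $i\in\cJ_4^\tau$ gives $-(g_i-v_{\tau(i)})^2$) to exhibit a strictly negative summand at any index where $g_i>\lambda$, contradicting $G\ge 0$. This avoids both the need to reason separately about whether the given feasible point is an inner optimizer and the awkwardness of the incomplete (S1)/(S2) dichotomy. Once the reduction is in place, your case analysis on $\Phi(d)=c_\tau/(L\mu w^2)$ matches the paper's analysis of $H(d,w)$ (the slope $\tfrac{(\kappa-1)^2}{4}-1$ flips sign at $\kappa=3$, the admissible $d$-range is $(r-r^*,\min\{r,m-r^*\}]$ and is empty iff $r^*=0$ or $r=m$, the case $r<r^*$ is handled by $d=0$, and for $\kappa>3$ the maximum over admissible $d$ occurs at $d=\min\{r,m-r^*\}$ and rearranges to the case-(4) threshold). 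The only thing to be slightly careful about, which you implicitly handle, is that $\min\{r,m-r^*\}$ is indeed an admissible $d$ when $r\ge r^*\ge 1$ and $r<m$; this uses $r^*\ge 1$ and $r<m$ to get $\min\{r,m-r^*\}>r-r^*$.
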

\begin{proof}
    By the change of variables $(x,g,y,v,\tau,w)\gets (x/\lambda,g/\lambda,y/\lambda,v/\lambda,\tau,w/\lambda)$, we see that \cref{eq:opt_sc_introducew} can be reduced to the following problem:
    \begin{equation}
        \label{eq:opt_sc1}
        \begin{aligned}
            &&\sup_{\begin{subarray}{c}
                x,g,y,v\in \R^{m}\\
                \tau\in \gp_m,w\in \R
            \end{subarray}} &\lambda^2\bigg[\sum_{i=1}^m (L x_i+g_i)(\mu y_{\tau(i)}+v_{\tau(i)})+\sum_{i=1}^m (\mu x_i+g_i)(Ly_{\tau(i)}+v_{\tau(i)}) \\
            &&&~~~~-\sum_{i=1}^m(Lx_i+g_i)(\mu x_i+g_i)-\sum_{i=1}^m(Ly_i+v_i)(\mu y_i+v_i)\bigg] \\
            &&s.t.~~&\forall i\in [r],~x_i\geq w>0,~g_i=1,\quad \forall i\in [r^*],~y_i>0,~v_i=1,\\
            &&&\forall i\in [m]\setminus [r],~x_i=0,~g_i\in [0,1+Lw],\\
            &&&\forall i\in [m]\setminus [r^*], ~y_i=0,~v_i\in [0,1]. 
        \end{aligned}
    \end{equation}
    Since dropping the constant $\lambda^2$ won't affect the sign of the function value and our claim only concerns the feasible set of \eqref{eq:opt_sc_introducew} and the {\em sign} of its objective value, we shall consider the following optimization problem instead:
    \begin{equation}
        \label{eq:opt_scfixtau1}
        \begin{aligned}
            &&\sup_{\begin{subarray}{c}
                x,g,y,v\in \R^{m}\\
                \tau\in \gp_m,w\in \R
            \end{subarray}} &\sum_{i=1}^m (L x_i+g_i)(\mu y_{\tau(i)}+v_{\tau(i)})+\sum_{i=1}^m (\mu x_i+g_i)(Ly_{\tau(i)}+v_{\tau(i)}) \\
            &&&-\sum_{i=1}^m(Lx_i+g_i)(\mu x_i+g_i)-\sum_{i=1}^m(Ly_i+v_i)(\mu y_i+v_i) \\
            &&s.t.~~ &\forall i\in [r],~x_i\geq w>0,~g_i=1,\quad \forall i\in [r^*],~y_i>0,~v_i=1,\\
            &&&\forall i\in [m]\setminus [r],~x_i=0,~g_i\in [0,1+Lw],\\
            &&&\forall i\in [m]\setminus [r^*], ~y_i=0,~v_i\in [0,1].  
        \end{aligned}
    \end{equation}
    Notice that when $\tau$ and $w$ are fixed, \cref{eq:opt_scfixtau1} becomes \cref{eq:opt_scfixedw}. Applying \cref{lema1}, setting $d_\tau=|\cJ^\tau_2|$ and recalling the definition of $\cJ^\tau_3$ in \eqref{bound_indexset}, we see that the solution set $\Omega_{w,\tau}$ of \eqref{eq:opt_scfixedw} is nonempty, and for all $(\bar x,\bar g,\bar y,\bar v)\in \Omega_{w,\tau}$, it holds that $\|\bar g\|_\infty>1$ if and only if $r^*-r+d_\tau>0$. 
    
Next, define the following function $H:\n_0\times \R_+ \to \R$:
        \begin{equation}
            \label{Htauw}
          H(d,w):=\left(-L\mu d+(r^*-r+d)\frac{L(L-\mu)^2}{4\mu}\right)w^2.
        \end{equation}
    Then in view of \eqref{bound_indexset} and \eqref{opt_val_val}, the optimal value of \eqref{eq:opt_scfixedw} is given by $H(|\cJ^\tau_2|,w)$.    
    Moreover, we see that \eqref{eq:opt_scfixtau1} is equivalent to the following problem 
    \begin{equation}
        \label{prob_reduced_sc}
        \sup_{w\in \R,~d\in \n_0}   H(d,w)\quad s.t. ~w>0,~  r-r^*\leq d\leq \min\{r,m-r^*\}, 
    \end{equation}
    where the bound for $d$ comes from the requirement that $|\cJ_i^\tau|\geq 0$ for $i\in [4]$ in \cref{bound_indexset}.
    
    We consider the following scenarios:
    \begin{itemize}
        \item[(S1)] The optimal value of \eqref{prob_reduced_sc} is nonpositive, and \eqref{prob_reduced_sc} has no feasible solution $(d,w)$ satisfying $H(d,w)\geq 0$ and $r^*-r+d>0$.
        
        In this scenario, we claim that \eqref{eq:opt_sc_introducew} has no feasible solution $(\bar x,\bar g,\bar y,\bar v,\bar \tau,\bar w)$ with $\|\bar g\|_\infty>\lambda$ and $G(\bar x,\bar g,\bar y,\bar v,\bar \tau,\bar w)\geq 0$, where $G$ is the objective of \cref{eq:opt_sc_introducew}. A short proof is provided below.
        
        Suppose such a feasible solution $(\bar x,\bar g,\bar y, \bar v, \bar\tau,\bar w)$ of \eqref{eq:opt_sc_introducew} exists, then either $(\bar x,\bar g,\bar y,\bar v)/\lambda$ is optimal for \eqref{eq:opt_scfixedw} with $w=\bar w/\lambda$ and $\tau=\bar\tau$, or $(\bar x,\bar g,\bar y,\bar v)/\lambda$ is not optimal. In the latter case, the optimal value of \eqref{prob_reduced_sc} must be positive. In the former case, we see from \cref{lema1} that $|\cJ_3^{\bar\tau}|>0$, and hence \eqref{prob_reduced_sc} has a feasible solution $(\tilde d,\tilde w)=(|\cJ^{\bar\tau}_2|,\bar w/\lambda)$ with $H(\tilde d,\tilde w)\geq 0$ and $r^*-r+\tilde d = r^* - r + |\cJ^{\bar\tau}_2| = |\cJ^{\bar\tau}_3|>0$ (see \eqref{bound_indexset}).  Both cases yield a contradiction.
        \item[(S2)] There exists a feasible solution $(d,\tilde w)$ of \cref{prob_reduced_sc} satisfying $H(d,\tilde w)\geq 0$ and $r^*-r+d>0$.
        
        In this scenario, \eqref{eq:opt_sc_introducew} has a feasible solution $(\bar x,\bar g,\bar y,\bar v,\bar\tau,\bar w)$ with $G(\bar x,\bar g,\bar y,\bar v,\bar\tau,\bar w)\geq 0$ and $\|\bar g\|_\infty>\lambda$. Indeed, we just need to take $\bar\tau\in \gp_m$ satisfying $|\cJ_2^{\bar\tau}|=d$, and then take $(\tilde x, \tilde g,\tilde y,\tilde v)$ to be the optimal solution of \eqref{eq:opt_scfixedw} with $\tau=\bar\tau$ and $w=\tilde w$, and set $(\bar x,\bar g,\bar y,\bar v,\bar w)=\lambda(\tilde x, \tilde g,\tilde y,\tilde v,\tilde w)$.
    \end{itemize}
    We note that the classification in (S1) and (S2) is \textit{not} complete, since we cannot say anything if the optimal value of \eqref{prob_reduced_sc} is positive and there is no feasible solution $(d,w)$ of \eqref{prob_reduced_sc} satisfying $H(d,w)\geq 0$ and $r^*-r+d>0$. Nevertheless, the two scenarios in (S1) and (S2) are enough for our proof.
    Consider the following cases on $r$, $r^*$ and $\kappa := L/\mu$.
    \begin{itemize}
        \item[Case 1:] $r=m$ or $r^*=0$. If $r=m$, then 
        every feasible point $(d,w)$ to \eqref{prob_reduced_sc} must satisfy $d = r-r^*$ and $H(d,w) = -L\mu dw^2\le 0$. Hence, (S1) holds. If $r^*=0$, we see that every feasible point $(d,w)$ to \eqref{prob_reduced_sc} must satisfy $d=r$. Then, in view of \eqref{Htauw}, we can rewrite \eqref{prob_reduced_sc} as:
        \[ \sup_{w>0} -L\mu rw^2.    \] 
        This means that every feasible solution of \eqref{prob_reduced_sc} has a negative objective function value. Then (S1) holds. 
        \item[Case 2:] $r<r^*$. Setting $d=0$ and selecting $w>0$, we see that 
        $$H(d,w)=\left(-L\mu d+(r^*-r+d)\frac{L(L-\mu)^2}{4\mu}\right)w^2=(r^*-r)\frac{L(L-\mu)^2}{4\mu}w^2\geq 0,  $$
        and $r^*-r+d>0$. Then (S2) holds.  
        \item[Case 3:] $m>r\geq r^*$ and $L=\mu$ (\ie, $\kappa = 1$). Then, we have
        \[ H(d,w)=\left(-L\mu d+(r^*-r+d)\frac{L(L-\mu)^2}{4\mu}\right)w^2=-L^2 d w^2.    \]
        Then the optimal value of \eqref{prob_reduced_sc} is $0$, and for all feasible solution $(d,w)$ of \eqref{prob_reduced_sc} with $r^*-r+d>0$ it holds that $d>r-r^*\geq 0$, and hence $H(d,w)<0$. Then (S1) holds. 
        \item[Case 4:] $m>r\geq r^*>0$ and $L>\mu$  (\ie, $\kappa > 1$). If $\kappa=\frac{L}{\mu}=3$, then we have $-L\mu+\frac{L(L-\mu)^2}{4\mu}=L\mu(\frac{(\kappa-1)^2}{4}-1)=0$. Therefore, \eqref{prob_reduced_sc} is equivalent to that:
        \[     \sup_{w\in \R,~d\in \n_0}  (r^*-r)\frac{L(L-\mu)^2}{4\mu}w^2\quad\quad s.t. ~w>0,~  r-r^*\leq d\leq \min\{r,m-r^*\}.    \]
        In this case, we clearly see that the optimal value of \eqref{prob_reduced_sc} is $0$, and is achievable if and only if $r=r^*$. If $r>r^*$, then (S1) holds. If $r=r^*$, then any feasible solution to \eqref{prob_reduced_sc} is optimal. Since $r=r^*<m$, for $\hat d := \min\{r,m-r^*\}$ and any $w > 0$, the point $(w,\hat d)$ is feasible (because $\min\{r,m-r^*\}>0$), and in this case we have $r^*-r+\hat d=\hat d>0$, which means (S2) holds.

        Next we assume $\kappa\neq 3$. Let $\alpha:=\frac{r-r^*}{1-\frac{4}{(\kappa-1)^2}}=\frac{r-r^*}{1-\frac{4\mu^2}{(L-\mu)^2}}$. We now rewrite $H$ in \eqref{Htauw} as:
        \begin{align*}
            &H(d,w)=\left(-L\mu d+(r^*-r+d)\frac{L(L-\mu)^2}{4\mu}\right)w^2 \\
            &=\frac{L(L-\mu)^2}{4\mu}\left(\frac{-4 \mu^2 d}{(L-\mu)^2}+r^*-r+d\right)w^2 =\frac{L(L-\mu)^2}{4\mu}\left(\frac{-4d}{(\kappa-1)^2}+r^*-r+d\right)w^2 \\
            &=\frac{L(L-\mu)^2}{4\mu}\left(\left(1-\frac{4}{(\kappa-1)^2}\right)d+r^*-r\right)w^2 =\frac{L(L-\mu)^2}{4\mu}\left(1-\frac{4}{(\kappa-1)^2}\right)(d-\alpha)w^2.
        \end{align*}
        Then, we can rewrite \cref{prob_reduced_sc} as: 
        \[    
        {\everymath = {\displaystyle}
        \begin{array}{cl}
\sup_{w\in \R,~d\in \n_0}  &\frac{L(L-\mu)^2}{4\mu} \left(1-\frac{4}{(\kappa-1)^2}\right)(d-\alpha)w^2\\
s.t. &w>0,~  r-r^*\leq d\leq \min\{r,m-r^*\}, ~\alpha=\frac{r-r^*}{1-\frac{4}{(\kappa-1)^2}}.
        \end{array}
        }
        \]
        If $\kappa<3$ and $r\geq r^*$, then $\alpha\leq 0$, and we see that the optimal value of \eqref{prob_reduced_sc} is $0$. Moreover, for all feasible solution $(d,w)$ of \eqref{prob_reduced_sc} with $r^*-r+d>0$, we have $d>r-r^*\geq 0$ and $H(d,w)<0$. Then (S1) holds.  

        Finally, we assume $\kappa>3$. If $\alpha>\min\{r,m-r^*\}$, then the optimal value of \eqref{prob_reduced_sc} is $0$, and for all $(d,w)$ that is feasible to \eqref{prob_reduced_sc}, it holds that $H(d,w)<0$. Then (S1) holds. If $\alpha\le\min\{r,m-r^*\}$, then we can select $d=\min\{r,m-r^*\}$ and any $w>0$, which is feasible for \eqref{prob_reduced_sc} and $r^*-r+d=\min\{r^*,m-r\}>0$, and we have $H(d,w)\geq 0$. Then (S2) holds. 
    \end{itemize}
    In summary, note that we have argued that we have either (S1) or (S2). Moreover, (S1) holds if and only if any of the following is true, and (S2) holds otherwise.
        \begin{itemize}
            \item[(1)] $r=m$ or $r^*=0$.
            \item[(2)] $m>r\geq r^* > 0$ and $\kappa = 1$.
            \item[(3)] $m>r\geq r^*>0$, $\kappa > 1$, $\kappa=3$ and $r>r^*$.
            \item[(4)] $m>r\geq r^*>0$, $\kappa > 1$, $\kappa<3$.
            \item[(5)] $m>r\geq r^*>0$, $\kappa > 1$, $\kappa>3$ and $\alpha>\min\{r,m-r^*\}$.
        \end{itemize}
        Upon integrating (1) into the other conditions and regrouping (2), (3) and (4), we can further rewrite the above conditions as follows: 
        \begin{itemize}
            \item[(1)] $r=m$ or $r^*=0$.
            \item[(2)] $r>r^*$ and $\kappa=3$.
            \item[(3)] $r\geq r^*$ and $\kappa<3$.
            \item[(4)] $r\geq r^*$, $\kappa>3$ and $\alpha>\min\{r,m-r^*\}$.
        \end{itemize}
        Next, notice that the condition $r^* = 0$ is not ruled out by (2), (3) and (4); in particular, observe that the condition $\alpha>\min\{r,m-r^*\}$ holds trivially when $\kappa > 3$ and $r^* = 0$, because $r\ge 1$. Thus, we can further rewrite the above conditions as follows: 
        \begin{itemize}
            \item[(1)] $r=m$.
            \item[(2)] $r>r^*$ and $\kappa=3$.
            \item[(3)] $r\geq r^*$ and $\kappa<3$.
            \item[(4)] $r\geq r^*$, $\kappa>3$ and $\alpha>\min\{r,m-r^*\}$.
        \end{itemize}
Finally, we notice that $r-r^*=\min\{r,m-r^*\}-\min\{r^*,m-r\}$, which can be proved by discussing the two cases $r+r^*\leq m$ and $r+r^*> m$ separately. Then, we have $\alpha=(\min\{r,m-r^*\}-\min\{r^*,m-r\})/(1-\frac{4}{(\kappa-1)^2})$. Thus, when $\kappa>3$, we can rewrite the condition $\alpha>\min\{r,m-r^*\}$ as 
\begin{equation}\label{20251226lastlastlast}
\min\{r,m-r^*\}>\frac{(\kappa-1)^2}{4}\min\{r^*,m-r\}.
\end{equation} 
Moreover, when $\kappa = 3$, the condition \cref{20251226lastlastlast} is the same as $\min\{r,m-r^*\}-\min\{r^*,m-r\} > 0$, i.e., $r > r^*$. Furthermore, when $\kappa < 3$ and $r^* \le r < m$, we have $\frac{(\kappa-1)^2}{4} < 1$ and hence we always have
\[
\min\{r,m-r^*\}-\min\{r^*,m-r\} = r - r^* > \left(\frac{(\kappa-1)^2}{4} - 1\right)\min\{r^*,m-r\};
\]
this is because the strict inequality holds trivially when $r > r^*$, and if $r = r^*$, we have $\min\{r^*,m-r\} = \min\{r,m-r\} > 0$, which also implies the strict inequality above. Hence, we see from the above display that \cref{20251226lastlastlast} holds trivially when $\kappa < 3$ and $r^* \le r < m$. Consequently, the above four cases now can be recapped as the following two cases:
        \begin{itemize}
            \item[(1)] $r=m$.
            \item[(2)] $r\geq r^*$ and $ \min\{r,m-r^*\}>\frac{(\kappa-1)^2}{4}\min\{r^*,m-r\} $.
        \end{itemize}
\end{proof}
\revise{Let us emphasize the boundary consequence of \cref{lemma5-7}. If $r\ne m$ and
\[
\min\{r,m-r^*\}=\frac{(\kappa-1)^2}{4}\min\{r^*,m-r\},
\]
then condition (1) in \cref{lemma5-7} fails and condition (2) also fails because its inequality is strict. Therefore the ``otherwise'' part of \cref{lemma5-7} applies: there exists a feasible point of \eqref{eq:opt_sc_introducew} with nonnegative objective value and $\|\bar g\|_\infty>\lambda$. This is the boundary case used to construct the non-$r$-factorizable example in \cref{thm5-9}.}

\bibliographystyle{plain}
 \bibliography{Commonbib}

\end{document}